 \newtheoremstyle{mytheorem}
 {3pt}
 {3pt}
 {\slshape}
 {}
 {\bfseries}
 {.}
 { }
 {}
\numberwithin{equation}{section}
\theoremstyle{theorem}
\newtheorem{theorem}{Theorem}[section]
\newtheorem{lemma}[theorem]{Lemma}
\newtheorem{proposition}[theorem]{Proposition}
\newtheorem{claim}[theorem]{Claim}
\theoremstyle{definition}
\newtheorem{definition}{Definition}[section]
\newtheorem{example}{Example}[section]
\newtheorem*{example*}{Example}
\newtheorem{conjecture}{Conjecture}[section]
\theoremstyle{remark}
\newtheorem{remark}{Remark}[section]
\newtheorem*{remark*}{Remark}
\newtheorem*{remarks*}{Remarks}
\newtheorem{question}{Question}[section]
\newcommand{\Keywords}[1]{\ifthenelse{\isempty{#1}}{}{\smallskip \smallskip \noindent \textbf{Keywords}. #1}}
\newcommand{\MSC}[2][2010]{\ifthenelse{\isempty{#2}}{}{\smallskip \smallskip \noindent \textbf{#1MSC}. #2}}
\newcommand{\abstractnote}[1]{\ifthenelse{\isempty{#1}}{}{\smallskip \smallskip \noindent \textsuperscript{\dag}#1}}
\def\specialsection{\@startsection{section}{1}%
  \z@{\linespacing\@plus\linespacing}{.5\linespacing}%
  {\normalfont}}
\def\section{\@startsection{section}{1}%
  \z@{.7\linespacing\@plus\linespacing}{.5\linespacing}%
  {\normalfont\scshape}}
\patchcmd{\@settitle}{\uppercasenonmath\@title}{\Large\boldmath}{}{}
\patchcmd{\@settitle}{\begin{center}}{\begin{flushleft}}{}{}
\patchcmd{\@settitle}{\end{center}}{\end{flushleft}}{}{}
\patchcmd{\@setauthors}{\MakeUppercase}{\normalsize}{}{}
\patchcmd{\@setauthors}{\centering}{\raggedright}{}{}
\patchcmd{\section}{\scshape}{\large\bfseries\boldmath}{}{}
\patchcmd{\subsection}{\bfseries}{\bfseries\boldmath}{}{}
\renewcommand{\@secnumfont}{\bfseries}
\patchcmd{\@startsection}{\@afterindenttrue}{\@afterindentfalse}{}{}
\patchcmd{\abstract}{\leftmargin3pc}{\leftmargin1pc}{}{}
\def\maketitle{\par
  \@topnum\z@ 
  \@setcopyright
  \thispagestyle{empty}
  \ifx\@empty\shortauthors \let\shortauthors\shorttitle
  \else \andify\shortauthors
  \fi
  \@maketitle@hook
  \begingroup
  \@maketitle
  \toks@\@xp{\shortauthors}\@temptokena\@xp{\shorttitle}%
  \toks4{\def\\{ \ignorespaces}}
  \edef\@tempa{%
    \@nx\markboth{\the\toks4
      \@nx\MakeUppercase{\the\toks@}}{\the\@temptokena}}%
  \@tempa
  \endgroup
  \c@footnote\z@
  \@cleartopmattertags
}
\def\maketag@@@#1{\hbox{\m@th\normalfont\normalsize#1}}
\newcommand{\ps}{\mathscr{P}}
\newcommand{\is}{\mathscr{I}}
\newcommand{\tail}{\mathrm{Tail}}
\newcommand{\tP}{\tilde{P}}
\newcommand{\tp}{\tilde{p}}
\title{Linked partition ideals and Kanade--Russell conjectures}
\author[S. Chern]{Shane Chern}
\address[S. Chern]{Department of Mathematics, The Pennsylvania State University, University Park, PA 16802, USA}
\email{shanechern@psu.edu}
\author[Z. Li]{Zhitai Li}
\address[Z. Li]{Department of Mathematics, The Pennsylvania State University, University Park, PA 16802, USA}
\email{zfl5082@psu.edu}
\date{}
\begin{document}

\maketitle


\begin{abstract}

This paper will primarily present a method of proving generating function identities for partitions from linked partition ideals. The method we introduce is built on a conjecture by George Andrews and that those generating functions satisfy some $q$-difference equations. We will come up with the generating functions of partitions in the Kanade--Russell conjectures to illustrate the effectiveness of this method.

\Keywords{Partitions, Kanade--Russell conjectures, linked partition ideals, $q$-difference equations, computer algebra.}

\MSC{Primary 05A17; Secondary 11P84.}
\end{abstract}

\section{Introduction}

\subsection{Background}

As usual, a \textit{partition} $\lambda$ of a positive integer $n$ is a weakly decreasing sequence of positive integers $\lambda_1\ge \lambda_2\ge\cdots\ge \lambda_{\ell}$ whose sum equals $n$. By convention, we may assume that $0$ has one partition, which is called the empty partition $\emptyset$.

In the theory of partitions, generating function identities are of great interest as they encapsulate considerable information of the partitions in question. In a series of papers \cite{And1972,And1974,And1975} dated back to the 1970s, George Andrews initiated a general theory of partition identities. These papers were later included in Chapter 8 of Andrews' monograph ``\textit{The theory of partitions}'' \cite{And1976}. In particular, Andrews introduced the concept of linked partition ideal. Recently, Andrews further communicated the idea that linked partition ideals deserve some more attention for their generating functions can be elegantly formulated.

The following conjecture by Andrews provides us a basis of ``guessing'' the generating function: 
\begin{conjecture}[Andrews]
Every linked partition ideal has a bivariate generating function of the form
\begin{equation}\label{eq:And-conj-0}
\sum_{n_1,\ldots,n_r\ge 0}\frac{(-1)^{L_1(n_1,\ldots,n_r)}q^{Q(n_1,\ldots,n_r)+L_2(n_1,\ldots,n_r)}x^{L_3(n_1,\ldots,n_r)}}{(q^{B_1};q^{A_1})_{n_1}\cdots (q^{B_r};q^{A_r})_{n_r}},
\end{equation}
in which $L_1$, $L_2$ and $L_3$ are linear forms in $n_1,\ldots,n_r$ and $Q$ is a quadratic form in $n_1,\ldots,n_r$. Here the coefficient of the $x^mq^n$ term is the number of partitions of $n$ in this linked partition ideal with exactly $m$ parts.
\end{conjecture}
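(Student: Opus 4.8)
The plan is to exploit the finite self-similar structure that every linked partition ideal carries by construction. Following Andrews, a linked partition ideal $\mathscr{I}$ is built from a finite set of admissible links together with a finite linking relation and a fixed modulus $m$, so that each partition in $\mathscr{I}$ is an accepting path in a finite directed graph (automaton) whose vertices record the local residual configuration and whose edges append one link. First I would make this automaton explicit and read off a finite system of $q$-difference equations for the state generating functions $\Phi_v(x,q) = \sum x^{\#\mathrm{parts}} q^{\mathrm{size}}$: an edge $v \to w$ that appends a link with $t$ parts and base weight $b$ contributes a term $x^{t} q^{b}\,\Phi_w(q^{m}x, q)$, the dilation $x \mapsto q^{m}x$ encoding that every subsequently added part is shifted up by the modulus. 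The target bivariate generating function is then a fixed $\mathbb{Q}$-linear combination of the $\Phi_v$, and the whole problem is reduced to solving this dilation system in closed form.

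Next I would iterate the system formally and interpret the resulting multi-sum. Each summation index $n_i$ should count the number of times a fundamental cycle of the automaton is traversed; the number of parts contributed per traversal becomes the coefficient of $n_i$ in the linear form $L_3$, while the $q$-weight of the cycle together with its offset produces the modulus $A_i$ and shift $B_i$ of the factor $(q^{B_i};q^{A_i})_{n_i}$, and the sign exponent $L_1$ and linear part $L_2$ are supplied by the fixed initial and terminal links of the path. The quadratic form $Q$ must emerge because the $k$-th traversal of a cycle is dilated by the accumulated modulus-shift of the previous traversals, so that repeated traversal contributes $q$-exponents in arithmetic progression whose partial sums are quadratic in the $n_i$; the cross terms of $Q$ likewise record the mutual dilation between distinct cycles. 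In the simplest case of a single self-loop this iteration telescopes by the $q$-binomial theorem into exactly one factor $(q^{B_1};q^{A_1})_{n_1}^{-1}$, reproducing the Rogers--Ramanujan-type shape $\sum_n x^{n}q^{n^2}/(q;q)_n$, and one would hope the general case assembles analogously.

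The hard part --- and the reason \eqref{eq:And-conj-0} is stated as a conjecture rather than proved --- will be showing that this iteration \emph{always} closes up into a single Nahm-type sum with one quadratic form and exactly $r$ independent $q$-Pochhammer denominators drawn from the prescribed family $(q^{B_i};q^{A_i})_{n_i}$. For automata with interlocking or nested cycles the naive iteration produces a sum over lattice paths whose weights need not factor into independent repeated-traversal geometric sums, and there is no general mechanism forcing the exponent to remain a polynomial of degree two or the denominators to avoid nested shapes such as $(q^{B};q^{A})_{n_i+n_j}$. Controlling these cross-interactions so that they collapse to a single quadratic form is precisely the open question, and it is tightly linked to the unresolved problem of classifying which linear $q$-difference systems with dilation admit Nahm-sum solutions. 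Accordingly, a realistic route to the full statement would first require a canonical normal form for linked partition ideals --- ideally one in which the fundamental cycles are made vertex-disjoint, so that their generating sums genuinely decouple --- from which the shape \eqref{eq:And-conj-0} could be read off directly; absent such a normal form, one can only prove the conjecture for structurally restricted families and verify the remainder computationally.
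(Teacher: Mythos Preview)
The statement you are attempting to prove is presented in the paper as a \emph{conjecture}, and the paper does not supply a proof of it. Indeed, the authors remark explicitly that ``the proof of Andrews' conjecture should be regarded as a worthy objective,'' and the body of the paper merely accumulates evidence for it by producing explicit sum-side formulas for the particular linked partition ideals arising in the Kanade--Russell conjectures. So there is no paper proof to compare against.

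Your proposal is therefore not so much a proof as an honest sketch of \emph{why} the statement is plausible and \emph{where} the obstruction lies, and on that level it is accurate. The automaton/$q$-difference-system picture you describe is exactly the machinery the paper sets up in Sections~2 and~3 (following Andrews' Chapter~8): the finite set $L_{\mathscr{I}}$ plays the role of your vertex set, the linking sets $\mathcal{L}_{\mathscr{I}}(\pi)$ give the edges, and equation~(2.3) is precisely your dilation recursion $\Phi_v(x) = x^{\sharp(\pi)}q^{|\pi|}\sum_w \Phi_w(xq^{l(\pi)m})$. The paper then uses Andrews' elimination algorithm (Section~3) to reduce the system to a single $q$-difference equation in one unknown, guesses a Nahm-type candidate by trial, and verifies it via $q$-holonomic closure properties --- exactly the ``verify the remainder computationally'' fallback you mention at the end.

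Where your proposal is candid about the gap, it is correct: the step from the iterated system to a single multi-sum with one quadratic form and independent $q$-Pochhammer denominators is the entire content of the conjecture, and neither you nor the paper supplies a mechanism for it. Your suggestion of seeking a canonical normal form with vertex-disjoint fundamental cycles is a reasonable heuristic, but you should be aware that even in the paper's worked examples the underlying automata do \emph{not} have this disjoint-cycle structure (see e.g.~the linking tables in Claims~4.3, 4.6, 4.9, 4.12), yet the sum-sides still collapse to the conjectured shape. So the decoupling must happen for a subtler reason than graph-theoretic disjointness, and any genuine attack on the conjecture will have to explain that.
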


This conjecture has numerous pieces of empirical evidence:

\begin{enumerate}[label*=\arabic*.,leftmargin=*]
\item The first Rogers--Ramanujan identity (cf.~Corollary 7.6 in \cite{And1976}) states that the number of partitions of a nonnegative integer $n$ into parts congruent to $\pm 1$ modulo $5$ is the same as the number of partitions of $n$ such that each two consecutive parts have difference at least $2$. We know that the generating function of partitions under the above difference-at-a-distance theme is
\[
\sum_{n\ge 0}\frac{q^{n^2}}{(q;q)_n}.
\]
A generalization of the Rogers--Ramanujan identities is due to Gordon (cf.~Theorem 7.5 in \cite{And1976}). In a special case of Gordon's generalization, we encounter partitions of the form $\lambda_1+ \lambda_2+\cdots+ \lambda_{\ell}$, where for all $j$, $\lambda_j - \lambda_{j+k-1}\ge  2$ with $k\ge 2$ fixed. It can be shown that the generating function is
\[
\sum_{n_1,n_2,\ldots,n_{k-1}\ge 0}\frac{q^{N_1^2+N_2^2+\cdots+N_{k-1}^2}}{(q;q)_{n_1}(q;q)_{n_2}\cdots(q;q)_{n_{k-1}}},
\]
where $N_j=n_j+n_{j+1}+\cdots+n_{k-1}$. Andrews showed that this partition set is a linked partition ideal; see \cite[Chapter 8]{And1976}.

\item In the first G\"{o}llnitz--Gordon identity, one studies partitions of the form $\lambda_1+ \lambda_2+\cdots+ \lambda_{\ell}$, in which no odd part is repeated, $\lambda_j-\lambda_{j-1}\ge 2$ if $\lambda_j$ odd and $\lambda_j-\lambda_{j-1}> 2$ if $\lambda_j$ even. It can be shown that the generating function is
\[
\sum_{n_1,n_2,n_3\ge 0}\frac{(-1)^{n_2}q^{n_1^2+n_3^2+2n_1n_2+n_2}}{(q^2;q^2)_{n_1}(q^2;q^2)_{n_2}(q^2;q^2)_{n_3}}.
\]
This partition set is also a linked partition ideal as claimed by Andrews in \cite[Chapter 8]{And1976}.
\end{enumerate}

With the aid of the above conjecture and necessary computer algebra assistance, if we want to find a generating function identity for a linked partition ideal, we are able to single out the promising candidates by running through a number of multi-summations in the above fashion and comparing the series expansions.

\subsection{Kanade--Russell conjectures}

As we have already seen, many linked partition ideals consist of partitions under certain difference-at-a-distance theme. 

\begin{definition}[cf.~\cite{KR2015}]
	We say a partition $\lambda=\lambda_1+ \lambda_2+\cdots+ \lambda_{\ell}$ satisfies the \textit{difference at least $d$ at distance $k$} condition if, for all $j$, $\lambda_j - \lambda_{j+k}\ge  d$.
\end{definition}

In this setting, we may restate the corresponding partition set in the first Rogers--Ramanujan identity as ``the set of partitions with difference at least $2$ at distance $1$''.

In 2014, Kanade and Russell \cite{KR2015} proposed six challenging conjectures, labeled as $I_1$--$I_6$, on partition identities of Rogers--Ramanujan type. For example, the first of their conjectures reads as follows.

\begin{conjecture}[Kanade--Russell Conjecture $I_1$]
The number of partitions of a nonnegative integer $n$ into parts congruent to $1$, $3$, $6$ or $8$ modulo $9$ is the same as the number of partitions of $n$ with difference at least $3$ at distance $2$ such that if two consecutive parts differ by at most $1$, then their sum is divisible by $3$.
\end{conjecture}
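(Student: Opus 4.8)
The plan is to realize the partitions on the combinatorial side of Conjecture~$I_1$ as a linked partition ideal, extract from it a finite system of $q$-difference equations for the bivariate generating function, guess a closed form of the shape predicted by Andrews' conjecture~\eqref{eq:And-conj-0}, and then set $x=1$ to recover the infinite product for parts $\equiv 1,3,6,8\pmod 9$. The first task is to make the local structure explicit: whether a partition may legally end with a weakly decreasing run $a\ge b\ge c$ of its three smallest parts depends only on $a-b$, $b-c$, and the residues of $a,b,c$ modulo $3$, since the constraint $\lambda_j-\lambda_{j+2}\ge 3$ reaches only two steps and the divisibility rule is triggered only when a gap equals $0$ or $1$. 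This finiteness is exactly what presents the set as a linked partition ideal in the sense of Andrews \cite[Chapter~8]{And1976}, with finitely many linking sets. Conditioning on the smallest one or two parts then produces a closed system
\[
\mathbf{f}(x)=\mathsf{M}(x,q)\,\mathbf{f}(xq)
\]
for a vector $\mathbf{f}(x)=(f_1(x),\dots,f_s(x))$ of generating functions indexed by the admissible ``states'' (the residue/size data of the current smallest part), with initial condition $\mathbf{f}(0)=(1,\dots,1)$; the generating function we want is the component of $\mathbf{f}$ that allows an arbitrary smallest part.

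Next I would run the search suggested in the introduction: over multi-sums of the form~\eqref{eq:And-conj-0} (here a double or triple sum), compare power-series expansions against the system above, and isolate a candidate $F(x,q)$. The proof then becomes rigorous by verification rather than discovery. Using $q$-Zeilberger-style creative telescoping I would compute the linear $q$-difference relations in $x$ satisfied by $F$ together with the companion sums obtained from the natural shifts of its summation indices, and check that these solve the same system $\mathbf{f}(x)=\mathsf{M}(x,q)\mathbf{f}(xq)$ with the same initial values. Because that recursion strictly raises the power of $q$ at each application, the system plus the initial condition pins down its solution uniquely, so the match forces $F(x,q)$ to equal the generating function of the difference-condition partitions, refined by the number of parts.

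Finally I would put $x=1$. This collapses $F(1,q)$ to a single $q$-hypergeometric multi-sum, and it remains to prove the sum-to-product evaluation
\[
F(1,q)=\frac{1}{(q;q^{9})_\infty\,(q^{3};q^{9})_\infty\,(q^{6};q^{9})_\infty\,(q^{8};q^{9})_\infty}.
\]
This is a Rogers--Ramanujan--Slater type identity; I would establish it either by quoting the corresponding analytic identity if it is already in the literature, or otherwise by feeding the multi-sum into a Bailey chain, or by recognizing an inner sum as a theta quotient via the Jacobi triple product and simplifying. Together the three steps give the equality of the two partition counts asserted in Conjecture~$I_1$.

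The step I expect to be the main obstacle is the first one, combined with the uniqueness matching in the second: getting the list of states and linking sets exactly right so that the $q$-difference system is at once \emph{closed}---finitely many generating functions---and \emph{correct}, with no overlooked or spurious local configuration near the delicate cases where two consecutive parts differ by exactly $0$ or $1$, and then aligning that system term by term with the one satisfied by the conjectured multi-sum. The final sum-to-product step is comparatively routine when the relevant analytic identity is already on record; if it is not, the Bailey-pair computation is the second place where genuine work is required.
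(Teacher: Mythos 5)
There is a genuine gap, and it sits exactly where you wave it away. The statement you are asked to prove is the full Kanade--Russell conjecture $I_1$, and the paper does \emph{not} prove it --- it is stated as a conjecture throughout. What the paper actually establishes (via the linked-partition-ideal machinery, the $q$-difference system of modulus $3$, the Andrews-style guess, and the \texttt{qMultiSum}/\texttt{qGeneratingFunctions} verification) is only the identity
\begin{equation*}
G_{\ps_{T_{\mathrm{I},1}}}(x,q)=\sum_{n_1,n_2\ge 0}\frac{q^{n_1^2+3n_2^2+3n_1n_2}x^{n_1+2n_2}}{(q;q)_{n_1}\,(q^3;q^3)_{n_2}},
\end{equation*}
i.e.\ the ``sum side'' of $I_1$. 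Your first two steps reproduce this part of the paper's program essentially faithfully (one small correction: the modulus is $3$, so the system relates $\mathbf{f}(x)$ to $\mathbf{f}(xq^3)$, not $\mathbf{f}(xq)$; and the paper needs seven states $\emptyset,1,2+1,3+1,2,3,3+3$, later reduced to three, rather than a state space indexed by three smallest parts --- distance $2$ with modulus $3$ suffices).

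The fatal problem is your third step. The sum-to-product evaluation
\begin{equation*}
\sum_{n_1,n_2\ge 0}\frac{q^{n_1^2+3n_2^2+3n_1n_2}}{(q;q)_{n_1}\,(q^3;q^3)_{n_2}}
=\frac{1}{(q,q^{3},q^{6},q^{8};q^{9})_\infty}
\end{equation*}
is not ``comparatively routine'': it is precisely the open content of Conjecture $I_1$ once the generating function is known. It is not on record in the Slater list, it is not known to come out of a standard Bailey chain, and no Jacobi-triple-product manipulation is known to produce it --- if any of these worked, the conjecture would not have survived as a conjecture after Kur\c{s}ung\"oz and Kanade--Russell had already derived this same double sum combinatorially. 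The paper is explicit that the modulo $9$ cases remain unproved and that only some of the modulo $12$ companions were settled by Bringmann, Jennings-Shaffer and Mahlburg. So your proposal, as written, proves the same generating-function identity the paper proves, but does not prove the stated conjecture; the step you flag as the least problematic is in fact the one that nobody can currently carry out.
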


It should be remarked that these conjectures are intriguingly related to the representation theory of affine Lie algebra. For a detailed description of the idea behind them, one may refer to Kanade's Ph.D. Thesis \cite{Kan2015}.

On the other hand, in Russell's Ph.D. Thesis \cite{Rus2016}, companions to the Kanade--Russell conjectures $I_4$--$I_6$ were considered. Further, several more conjectures of the same flavor were proposed in \cite{KR2018}. In particular, among these conjectures (including the six conjectures in \cite{KR2015}), there are eleven of them involving the modulus $12$. It is notable that in a very recent paper of Bringmann, Jennings-Shaffer and Mahlburg \cite{BJM2018}, seven of the modulo $12$ conjectures were proved, while the rest were, although not completely proved, simplified to a great extent.

One major difficulty of proving the Kanade--Russell conjectures is that it is not always easy to find generating functions for partitions under certain difference-at-a-distance themes. Fortunately, this problem was settled in two recent papers of Kanade and Russell \cite{KR2018}, and Kur\c{s}ung\"{o}z \cite{Kur2018}, in which different sets of identities (but with some overlap) were demonstrated, respectively. However, their proofs, although different, are both purely combinatorial.

However, if we notice that the partition sets under difference-at-a-distance themes in the six conjectures $I_1$--$I_6$ are either a linked partition ideal or a subset of a linked partition ideal, a more algebraic approach can be provided. The resulting generating functions, in turn, give us more evidence for Andrews' conjecture.

\subsection{Outline of the paper}

In Section \ref{sect:linked-partition-ideal}, we will give a detailed account of linked partition ideals and their generating functions. Among those, the most consequential property is that those generating functions satisfy some $q$-difference equations. Such a $q$-difference equation is obtained by solving a $q$-difference system. To do so, we reformulate in Section \ref{sec:q-diff} an algorithm due to Andrews (cf.~\cite[Lemma 8.10]{And1976}) into the matrix form to make it easier to manipulate in computer algebra systems like \textit{Mathematica}.

In the next three sections, as experiments, we apply our method to not only reprove the six generating function identities involved in the Kanade--Russell conjectures but also present six more new identities. It is notable that our method is also applicable to the cases where the partitions in question are from a nice subset of a linked partition ideal.

We end our paper with several interesting transformation formulas motivated by a recent paper of Bringmann, Jennings-Shaffer and Mahlburg \cite{BJM2018}.

\begin{remark}
The proof of Andrews' conjecture should be regarded as a worthy objective: if we could successfully prove this conjecture, we essentially obtain a universal and robust method of deducing generating functions for partitions from linked partition ideals.
\end{remark}

\section{Linked partition ideals}\label{sect:linked-partition-ideal}

We now give a brief review of linked partition ideals. Note that we shall restate some definitions in \cite[Chapter 8]{And1976}.

Let $\ps$ be the set of partitions. Given a partition $\lambda\in\ps$, let $|\lambda|$ denote the sum of all parts of $\lambda$, let $\sharp(\lambda)$ denote the number of parts in $\lambda$ and let $\sharp_k(\lambda)$ denote the number of occurrences of parts of size $k$ in $\lambda$. For example, if $\lambda=3+3+2+1+1+1$, then $\sharp(\lambda)=6$, $\sharp_1(\lambda)=3$, $\sharp_2(\lambda)=1$, $\sharp_3(\lambda)=2$ and $\sharp_k(\lambda)=0$ for $k\ge 4$. From the definition of partitions, one can see that only finitely many of the $\sharp_k(\lambda)$ are nonzero.

We define a partial order ``$\le$'' by asserting that, for any two partitions $\lambda$ and $\pi$, $\pi\le \lambda$ if $\sharp_k(\pi)\le \sharp_k(\lambda)$ for all $k$. Andrews also defined the ``meet'' and ``join'' operations for $\lambda$ and $\pi$ by treating $\ps$ as a lattice:
\begin{enumerate}
\item $\lambda\cap\pi$ satisfies $\sharp_k(\lambda\cap\pi)=\min(\sharp_k(\lambda), \sharp_k(\pi))$ for all $k$;
\item $\lambda\cup\pi$ satisfies $\sharp_k(\lambda\cup\pi)=\max(\sharp_k(\lambda), \sharp_k(\pi))$ for all $k$.
\end{enumerate}

\begin{definition}
A subset $\is$ of $\ps$ is called a \textit{partition ideal} if for any $\lambda$ in $\is$, $\pi$ is also in $\is$ whenever $\pi\le\lambda$.
\end{definition}

\begin{remark}
Andrews further asserted that a partition ideal is indeed a semi-ideal in the notation of lattice theory.
\end{remark}

We next define the modulus of a partition ideal. To do so, we need the following notation.

Let $\is$ be a partition ideal. We define $\is^{(m)}$ by the collection of partitions in $\is$ whose smallest part is $>m$. We also include the empty partition $\emptyset$ in $\is^{(m)}$.

We then define a map $\phi$ by sending a partition $\lambda=\lambda_1+\lambda_2+\cdots+\lambda_\ell$ to $(\lambda_1+1)+(\lambda_2+1)+\cdots+(\lambda_\ell+1)$ and the empty partition to itself.

\begin{definition}
We say that a partition ideal $\is$ has \textit{modulus} $m$ if $m$ is a positive integer such that $\phi^m \is=\is^{(m)}$.
\end{definition}

\begin{remark}
A partition ideal might have more than one modulus.
\end{remark}

For two partitions $\lambda$ and $\pi$ in $\is$, their sum $\lambda\oplus \pi$ is defined by collecting their parts in weakly decreasing order. Lemma 8.9 in \cite{And1976} gives a unique decomposition for each $\lambda\in\is$ if $\is$ has modulus $m$.

\begin{lemma}
Let $\is$ be a partition ideal of modulus $m$. For each $\lambda\in\is$, we uniquely have
$$\lambda=\lambda_{(1)}\oplus(\phi^m\lambda_{(2)})\oplus(\phi^{2m}\lambda_{(3)})\oplus\cdots$$
where $\lambda_{(1)}$, $\lambda_{(2)}$, $\lambda_{(3)}$, $\ldots$ are in $\is$, all satisfying the property that the largest part $\le m$.
\end{lemma}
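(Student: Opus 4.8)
The plan is to prove existence and uniqueness simultaneously by induction on $|\lambda|$, peeling off the parts of $\lambda$ of size at most $m$ one block at a time and invoking the modulus relation $\phi^m\is=\is^{(m)}$ to handle the tail. For the inductive step I would first let $\lambda_{(1)}$ be the partition formed by exactly the parts of $\lambda$ of size $\le m$ and let $\rho$ be the partition formed by the remaining parts, so $\lambda=\lambda_{(1)}\oplus\rho$. Since $\sharp_k(\lambda_{(1)})\le\sharp_k(\lambda)$ and $\sharp_k(\rho)\le\sharp_k(\lambda)$ for every $k$, both $\lambda_{(1)}$ and $\rho$ lie in the partition ideal $\is$; and as every part of $\rho$ exceeds $m$ (or $\rho=\emptyset$), we get $\rho\in\is^{(m)}$. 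If $\lambda$ has no part $>m$ then $\rho=\emptyset$ and $\lambda=\lambda_{(1)}$, so taking all later blocks empty settles this case --- in particular the base case $\lambda=\emptyset$. Otherwise the relation $\is^{(m)}=\phi^m\is$ yields some $\mu\in\is$ with $\rho=\phi^m\mu$, and this $\mu$ is unique because adding $1$ to every part is injective on $\ps$, hence so is $\phi^m$. From $|\rho|=|\mu|+m\,\sharp(\mu)$ and $\sharp(\mu)=\sharp(\rho)\ge1$ one gets $|\mu|\le|\rho|-m<|\lambda|$, so the induction hypothesis applies to $\mu$, giving a unique decomposition $\mu=\mu_{(1)}\oplus(\phi^m\mu_{(2)})\oplus(\phi^{2m}\mu_{(3)})\oplus\cdots$ with every $\mu_{(j)}\in\is$ of largest part $\le m$. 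Since $\phi$ acts on parts independently it commutes with $\oplus$, so applying $\phi^m$ gives $\rho=(\phi^m\mu_{(1)})\oplus(\phi^{2m}\mu_{(2)})\oplus\cdots$, and hence
\[
\lambda=\lambda_{(1)}\oplus(\phi^m\mu_{(1)})\oplus(\phi^{2m}\mu_{(2)})\oplus\cdots
\]
has the desired shape with $\lambda_{(j+1)}=\mu_{(j)}$ for $j\ge1$ (and only finitely many blocks are nonempty because $|\lambda|<\infty$).

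For uniqueness, I would take an arbitrary decomposition $\lambda=\lambda_{(1)}\oplus(\phi^m\lambda_{(2)})\oplus(\phi^{2m}\lambda_{(3)})\oplus\cdots$ of the stated form and observe that for $j\ge1$ every part of $\phi^{jm}\lambda_{(j+1)}$ is absent or of size $\ge jm+1>m$, whereas every part of $\lambda_{(1)}$ has size $\le m$. Thus $\lambda_{(1)}$ must consist of precisely the parts of $\lambda$ of size $\le m$, so it is forced and agrees with the block chosen above. Removing it, $(\phi^m\lambda_{(2)})\oplus(\phi^{2m}\lambda_{(3)})\oplus\cdots=\phi^m\bigl(\lambda_{(2)}\oplus(\phi^m\lambda_{(3)})\oplus\cdots\bigr)$ must equal $\rho$, and injectivity of $\phi^m$ then forces $\lambda_{(2)}\oplus(\phi^m\lambda_{(3)})\oplus\cdots=\mu$; the decomposition of $\mu$ is unique by the induction hypothesis, so all blocks $\lambda_{(j)}$ with $j\ge2$ are determined, closing the induction.

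The argument is structurally straightforward, and the main obstacle is simply to get the bookkeeping right. The two points that need care are: showing $\rho$ genuinely lies in $\is^{(m)}$, which uses both that $\is$ is a partition ideal (to place $\rho$ in $\is$) and the definition of $\is^{(m)}$; and, more essentially, seeing that the partition $\mu$ recovered from $\rho=\phi^m\mu$ lies back in $\is$ --- this is exactly the step that needs the full equality $\phi^m\is=\is^{(m)}$, not merely the inclusion $\phi^m\is\subseteq\is^{(m)}$. Checking that $\phi^m$ commutes with $\oplus$ and is injective, and that $|\mu|$ strictly decreases so the induction is legitimate, is then routine.
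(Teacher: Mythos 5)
Your proof is correct, and it is essentially the standard argument: the paper itself offers no proof, merely citing Lemma 8.9 of Andrews' \emph{The theory of partitions}, whose proof is the same peeling-off induction you give, splitting off the parts $\le m$ and applying $\phi^{-m}$ to the remainder via the modulus relation $\phi^m\is=\is^{(m)}$. You correctly isolate the one essential point, namely that recovering $\mu\in\is$ from $\rho=\phi^m\mu$ needs the full equality $\phi^m\is=\is^{(m)}$ and not just an inclusion, so nothing further is required.
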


\begin{definition}
We define, for each partition ideal $\is$ of modulus $m$,
$$L_{\is,m}:=\{\lambda\in \is: \text{the largest part of $\lambda\le m$}\}.$$
Here, again, the empty partition is included in $L_{\is,m}$. Further,  if the value of $m$ is clear from the context, then the $m$ in the subscript could be omitted and we simply write $L_{\is}:=L_{\is,m}$.
\end{definition}

\begin{definition}
For any partition $\pi\in\ps$, its \textit{$m$-tail} $\tail_m(\pi)$ is defined to be the collection of parts of $\pi$ which are at most $m$. For example,
$$\tail_2(3+3+2+1+1+1)=2+1+1+1.$$
\end{definition}

Now we are ready to give the definition of linked partition ideals.

\begin{definition}\label{def:linked}
We say that a partition ideal $\is$ is a \textit{linked partition ideal} if
\begin{enumerate}[label=(\roman*)]
\item $\is$ has a modulus, say $m$;
\item the $L_{\is}$ corresponding to $m$ is a finite set;
\item for each $\pi\in L_{\is}$, there corresponds a minimal subset $\mathcal{L}_{\is}(\pi)\subseteq L_{\is}$ (called the linking set of $\pi$) and a positive integer $l(\pi)$ (called the span of $\pi$) such that for any partition $\lambda$, it belongs to $\is$ with $\tail_m(\lambda)=\pi$ if and only if we can find a partition $\tilde{\pi}$ with $\tail_m(\tilde{\pi})\in \mathcal{L}_{\is}(\pi)$ such that
$$\lambda=\pi\oplus \left(\phi^{l(\pi)m} \tilde{\pi}\right).$$
\end{enumerate}
\end{definition}

\begin{example}
	Let $\mathscr{D}$ denote the set of partitions into distinct parts. A trivial observation is that $\mathscr{D}$ is a partition ideal of modulus $1$. Further, $L_{\mathscr{D}}=L_{\mathscr{D},1}=\{\emptyset,1\}$. We also have
	\begin{gather*}
	\mathcal{L}_{\mathscr{D}}(\emptyset)=\{\emptyset,1\},\quad l(\emptyset)=1,\\
	\intertext{and}
	\mathcal{L}_{\mathscr{D}}(1)=\{\emptyset,1\},\quad l(1)=1.
	\end{gather*}
\end{example}

\begin{example}
	Let $\mathscr{R}$ denote the set of partitions in which each two consecutive parts have difference at least $2$. We can find that $\mathscr{R}$ is a partition ideal of modulus $2$. Further, $L_{\mathscr{R}}=L_{\mathscr{R},2}=\{\emptyset,1,2\}$. We also have
	\begin{gather*}
	\mathcal{L}_{\mathscr{R}}(\emptyset)=\{\emptyset,1,2\},\quad l(\emptyset)=1,\\
	\mathcal{L}_{\mathscr{R}}(1)=\{\emptyset,1,2\},\quad l(1)=1,\\
	\intertext{and}
	\mathcal{L}_{\mathscr{R}}(2)=\{\emptyset,2\},\quad l(2)=1.
	\end{gather*}
\end{example}

Finally, we consider a bivariate generating function for any subset $\mathscr{S}$ of $\ps$:
\begin{equation}
G_{\mathscr{S}}(x)=G_{\mathscr{S}}(x,q):=\sum_{\lambda\in\mathscr{S}}x^{\sharp(\lambda)}q^{|\lambda|}.
\end{equation}
In the setting of Definition \ref{def:linked}, if we further define $\is_{\pi}$ by the set of partitions in $\is$ whose $m$-tail is $\pi\in L_{\is}$, then (8.4.13) in \cite{And1976} tells us that
\begin{equation}\label{eq:important-gf-1}
\sum_{\mu\in \is_{\pi}} x^{\sharp(\mu)} q^{|\mu|}= x^{\sharp(\pi)}q^{|\pi|} \sum_{\varpi\in \mathcal{L}_{\is}(\pi)}\sum_{\nu\in \is_{\varpi}} \left(xq^{l(\pi)m}\right)^{\sharp(\nu)} q^{|\nu|}.
\end{equation}
In other words,
\begin{equation}\label{eq:important-gf-2}
G_{\is_{\pi}}(x)= x^{\sharp(\pi)}q^{|\pi|} \sum_{\varpi\in \mathcal{L}_{\is}(\pi)}G_{\is_{\varpi}}(xq^{l(\pi)m}).
\end{equation}

\section{Systems of $q$-difference equations}\label{sec:q-diff}

As we will see in the next section, a crucial point there can be summarized as the following question:

\begin{question}
Suppose we have a system of $q$-difference equations, say,
\begin{equation}\label{eq:system-gen}
\left\{\!\!\begin{array}{c}
F_1(x)=p_{1,1}(x)F_1(xq^m)+p_{1,2}(x)F_2(xq^m)+\cdots+p_{1,k}(x)F_k(xq^m)\\[0.5em]
F_2(x)=p_{2,1}(x)F_1(xq^m)+p_{2,2}(x)F_2(xq^m)+\cdots+p_{2,k}(x)F_k(xq^m)\\[0.5em]
\vdots\\[0.5em]
F_k(x)=p_{k,1}(x)F_1(xq^m)+p_{k,2}(x)F_2(xq^m)+\cdots+p_{k,k}(x)F_k(xq^m)
\end{array}\right.,
\end{equation}
where the $F$'s and $p$'s are in $x$ and $q$, is it possible to deduce a $q$-difference equation merely involving $F_1$?
\end{question}

Fortunately, an affirmative algorithm is provided by Andrews in the proof of \cite[Lemma 8.10]{And1976}. We would like to translate Andrews' algorithm to the matrix form to make it more transparent.

\medskip

At first, the system \eqref{eq:system-gen} can be written in the matrix form
\begin{equation}\label{eq:step-0}
\begin{pmatrix}
F_1(x)\\
F_2(x)\\
\vdots\\
F_k(x)
\end{pmatrix}
=
\begin{pmatrix}
p_{1,1}(x) & p_{1,2}(x) & \cdots & p_{1,k}(x)\\
p_{2,1}(x) & p_{2,2}(x) & \cdots & p_{2,k}(x)\\
\vdots & \vdots & \ddots & \vdots\\
p_{k,1}(x) & p_{k,2}(x) & \cdots & p_{k,k}(x)\\
\end{pmatrix}
\begin{pmatrix}
F_1(xq^m)\\
F_2(xq^m)\\
\vdots\\
F_k(xq^m)
\end{pmatrix}.
\end{equation}

\medskip

\textbf{Step (1).} We put $u_1(x)=F_1(x)$. Then \eqref{eq:step-0} becomes
\begin{equation}\label{eq:step-1}
\begin{pmatrix}
u_1(x)\\
F_2(x)\\
\vdots\\
F_k(x)
\end{pmatrix}
=
\begin{pmatrix}
p_{1,1}(x) & p_{1,2}(x) & \cdots & p_{1,k}(x)\\
p_{2,1}(x) & p_{2,2}(x) & \cdots & p_{2,k}(x)\\
\vdots & \vdots & \ddots & \vdots\\
p_{k,1}(x) & p_{k,2}(x) & \cdots & p_{k,k}(x)\\
\end{pmatrix}
\begin{pmatrix}
u_1(xq^m)\\
F_2(xq^m)\\
\vdots\\
F_k(xq^m)
\end{pmatrix}.
\end{equation}

If $p_{1,2}(x)=p_{1,3}(x)=\cdots=p_{1,k}(x)=0$, then we shall terminate at this place by noticing that
\[
u_1(x)=p_{1,1}(x)u_1(xq^m).
\]

\medskip

For Steps ($\mathrm{s}$) with $2\le s\le k$, we proceed iteratively as follows.

\medskip

\textbf{Step ($\mathbf{s}$).} Supposing that in the $(s-1)$-th Step, we obtain
\begin{equation}\label{eq:step-i-1}
\begin{pmatrix}
u_1(x)\\
\vdots\\
u_{s-1}(x)\\
F_s(x)\\
\vdots\\
F_k(x)
\end{pmatrix}
=
\tP_{s-1}
\begin{pmatrix}
u_1(xq^m)\\
\vdots\\
u_{s-1}(xq^m)\\
F_s(xq^m)\\
\vdots\\
F_k(xq^m)
\end{pmatrix},
\end{equation}
where $\tP_{s-1}$ is a $k\times k$ matrix with the $(i,j)$-th entry being $\tp_{i,j}(x)$.

Since we have arrived at the $s$th Step, we know that at least one of the $\tp_{s-1,s}(x)$, $\tp_{s-1,s+1}(x)$, \ldots, $\tp_{s-1,k}(x)$ is not identically zero. Otherwise, the program should be terminated at the $(s-1)$-th Step. Further, if $\tp_{s-1,s}(x)$ is identically zero and $\tp_{s-1,t}(x)$ (for some $t$ with $s+1\le t\le k$) is not identically zero, \eqref{eq:step-i-1} can be rewritten by swapping $F_s$ and $F_t$. In such a case, $\tP_{s-1}$ should be rewritten by swapping $\tp_{s,s}(x)$ and $\tp_{t,t}(x)$, swapping $\tp_{s,t}(x)$ and $\tp_{t,s}(x)$, swapping $\tp_{i,s}(x)$ and $\tp_{i,t}(x)$ for $i\ne s,t$, and swapping $\tp_{s,j}(x)$ and $\tp_{t,j}(x)$ for $j\ne s,t$. For notational convenience, we simply rename $F_s$ by $F_t$ and $F_t$ by $F_s$ so that the new relation is still of the form \eqref{eq:step-i-1} while $\tp_{s-1,s}(x)$ is not identically zero.

We then make the following substitution
\begin{equation}
u_{s}(xq^m)=\tp_{s-1,s}(x)F_s(xq^m)+\tp_{s-1,s+1}(x)F_{s+1}(xq^m)+\cdots+\tp_{s-1,k}(x)F_k(xq^m).
\end{equation}
Written in the matrix form, we have
\begin{equation}\label{eq:step-i-trans}
\begin{pmatrix}
u_1(xq^m)\\
u_2(xq^m)\\
\vdots\\
u_{s-1}(xq^m)\\
u_{s}(xq^m)\\
F_{s+1}(xq^m)\\
\vdots\\
F_k(xq^m)
\end{pmatrix}
=
T(x)
\begin{pmatrix}
u_1(xq^m)\\
u_2(xq^m)\\
\vdots\\
u_{s-1}(xq^m)\\
F_s(xq^m)\\
F_{s+1}(xq^m)\\
\vdots\\
F_k(xq^m)
\end{pmatrix},
\end{equation}
where
$$T(x)=\begin{pmatrix}
1 & 0 & \cdots & 0 & 0 & 0 & \cdots & 0\\
0 & 1 & \cdots & 0 & 0 & 0 & \cdots & 0\\
\vdots & \vdots & \ddots & \vdots & \vdots & \vdots & \ddots & \vdots\\
0 & 0 & \cdots & 1 & 0 & 0 & \cdots & 0\\
0 & 0 & \cdots & 0 & \tp_{s-1,s}(x) & \tp_{s-1,s+1}(x) & \cdots & \tp_{s-1,k}(x)\\
0 & 0 & \cdots & 0 & 0 & 1 & \cdots & 0\\
\vdots & \vdots & \ddots & \vdots & \vdots & \vdots & \ddots & \vdots\\
0 & 0 & \cdots & 0 & 0 & 0 & \cdots & 1\\
\end{pmatrix}.$$
Here all diagonal entries in the $k\times k$ matrix $T(x)$ are $1$ except for the $s$th diagonal entry. In the $s$th row of $T(x)$, for $s\le t\le k$, the $(s,t)$-th entry is $\tp_{s-1,t}(x)$. All remaining entries in $T(x)$ are $0$.

Since $\tp_{s-1,s}(x)$ is not identically zero, the matrix $T(x)$ is invertible. In particular, we have
$$T(x)^{-1}=
\begin{pmatrix}
1 & 0 & \cdots & 0 & 0 & 0 & \cdots & 0\\
0 & 1 & \cdots & 0 & 0 & 0 & \cdots & 0\\
\vdots & \vdots & \ddots & \vdots & \vdots & \vdots & \ddots & \vdots\\
0 & 0 & \cdots & 1 & 0 & 0 & \cdots & 0\\
0 & 0 & \cdots & 0 & \frac{1}{\tp_{s-1,s}(x)} & -\frac{\tp_{s-1,s+1}(x)}{\tp_{s-1,s}(x)} & \cdots & -\frac{\tp_{s-1,k}(x)}{\tp_{s-1,s}(x)}\\
0 & 0 & \cdots & 0 & 0 & 1 & \cdots & 0\\
\vdots & \vdots & \ddots & \vdots & \vdots & \vdots & \ddots & \vdots\\
0 & 0 & \cdots & 0 & 0 & 0 & \cdots & 1\\
\end{pmatrix}.$$

It follows from \eqref{eq:step-i-1} and \eqref{eq:step-i-trans} that
\begin{equation}\label{eq:step-i-final}
\begin{pmatrix}
u_1(x)\\
\vdots\\
u_{s}(x)\\
F_{s+1}(x)\\
\vdots\\
F_k(x)
\end{pmatrix}
=
\tP_{s}
\begin{pmatrix}
u_1(xq^m)\\
\vdots\\
u_{s}(xq^m)\\
F_{s+1}(xq^m)\\
\vdots\\
F_k(xq^m)
\end{pmatrix},
\end{equation}
where
$$\tP_{s}=T(xq^{-m}) \tP_{s-1} T(x)^{-1}.$$

\begin{claim}\label{claim:q-diff}
The matrix $\tP_{s}$ obtained above is of the form
\[
\begin{blockarray}{rccccccccc}
& 1 & 2 & 3 & 4 & \cdots & s & s+1 & \cdots & k \\
\begin{block}{r(ccccccccc)}
1 &  \star & 1 & 0 & 0 & \cdots & 0 & 0 & \cdots & 0 \\
2 & \star & \star & 1 & 0 & \cdots & 0 & 0 & \cdots & 0 \\
\vdots &  \vdots & \vdots &\vdots &\vdots &\ddots &\vdots &\vdots &\ddots&\vdots   \\
s-1 &  \star & \star & \star & \star & \cdots & 1 & 0 & \cdots & 0 \\
s &  \star & \star & \star & \star & \cdots & \star & \star & \cdots & \star \\
\vdots &  \vdots & \vdots &\vdots &\vdots &\ddots &\vdots &\vdots &\ddots&\vdots   \\
k &  \star & \star & \star & \star & \cdots & \star & \star & \cdots & \star \\
\end{block}
\end{blockarray}\ .
 \]
 More precisely, in row $r$ $(1\le r\le s-1)$ of $\tP_{s}$, the $(r,r+1)$-th entry is $1$ and the $(r,c)$-th entries are $0$ for all $c>r+1$.
\end{claim}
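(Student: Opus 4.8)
The plan is to prove Claim~\ref{claim:q-diff} by induction on $s$, tracking the structural shape of $\tP_s$ as the algorithm progresses. The base case is Step~(2): here $\tP_1$ is just the original matrix $(p_{i,j}(x))$ with no structural constraint, and after one application of the transformation we must check that row $1$ of $\tP_2 = T(xq^{-m})\,\tP_1\,T(x)^{-1}$ has a $1$ in position $(1,2)$ and zeros in positions $(1,c)$ for $c>2$. For the inductive step, I would assume $\tP_{s-1}$ already has the claimed shape in its first $s-2$ rows (the $(r,r+1)$-entry is $1$ and $(r,c)$-entries vanish for $c>r+1$, when $1\le r\le s-2$) and show the same holds for the first $s-1$ rows of $\tP_s$.

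The key computation is to understand how conjugation by $T$ acts row by row. First I would observe that $T(x)$ differs from the identity only in row $s$, where the entries in columns $s,s+1,\dots,k$ are $\tp_{s-1,s}(x),\dots,\tp_{s-1,k}(x)$; likewise $T(x)^{-1}$ differs from the identity only in row $s$. So for the product $M := \tP_{s-1}\,T(x)^{-1}$, right-multiplication by $T(x)^{-1}$ only alters columns $s,\dots,k$ of $\tP_{s-1}$: specifically column $s$ of $M$ is $\frac{1}{\tp_{s-1,s}(x)}$ times column $s$ of $\tP_{s-1}$, and each column $c>s$ of $M$ is column $c$ of $\tP_{s-1}$ minus $\frac{\tp_{s-1,c}(x)}{\tp_{s-1,s}(x)}$ times column $s$ of $\tP_{s-1}$; columns $1,\dots,s-1$ are untouched. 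Then left-multiplication by $T(xq^{-m})$ only alters row $s$ of $M$, replacing it by a combination of rows $s,\dots,k$ of $M$. Consequently rows $1,\dots,s-1$ of $\tP_s$ equal rows $1,\dots,s-1$ of $M$.

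Now I would finish by examining rows $1$ through $s-1$ of $M$ in two cases. For $1\le r\le s-2$: by the inductive hypothesis the $(r,s)$-entry of $\tP_{s-1}$ is $0$ (since $s>r+1$) and the $(r,c)$-entries for all $c>s$ are $0$; therefore the column operations defining $M$ leave these entries at $0$, and the entries in columns $1,\dots,s-1$ are unchanged, so row $r$ of $\tP_s$ still has its $1$ in column $r+1$ and zeros beyond. For $r=s-1$: I need the $(s-1,s)$-entry of $\tP_s$ to be $1$ and the $(s-1,c)$-entries for $c>s$ to be $0$. Here the $(s-1,s)$-entry of $M$ is $\frac{\tp_{s-1,s}(x)}{\tp_{s-1,s}(x)}=1$, using that $\tp_{s-1,s}(x)$ is not identically zero (guaranteed, after the swap described in Step~($\mathbf{s}$), precisely because the algorithm has not terminated); and for $c>s$ the $(s-1,c)$-entry of $M$ is $\tp_{s-1,c}(x) - \frac{\tp_{s-1,c}(x)}{\tp_{s-1,s}(x)}\,\tp_{s-1,s}(x) = 0$. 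This completes the induction.

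The main obstacle I anticipate is purely bookkeeping: being careful about which rows and columns each of the two one-line-nontrivial matrices $T(xq^{-m})$ and $T(x)^{-1}$ actually modifies, and confirming that the new row $s$ introduced by $T(xq^{-m})$ never contaminates rows $1,\dots,s-1$. There is also a small subtlety in the $q$-shift: the column operations use $T(x)^{-1}$ (evaluated at $x$) while the row operations use $T(xq^{-m})$, but since the argument only concerns which positions are forced to be $0$ or $1$ and not the precise rational-function values elsewhere, the shift does not interfere with the structural claim. I would state explicitly that the $1$'s and $0$'s asserted are exact identities of rational functions (not merely generic), which is what makes $T$ genuinely invertible at the next stage and lets the induction continue.
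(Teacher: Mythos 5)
Your proposal is correct and follows essentially the same route as the paper: induction on $s$, using that $T(xq^{-m})$ alters only row $s$ so that rows $1,\dots,s-1$ of $\tP_s$ come from $\tP_{s-1}T(x)^{-1}$, and then checking the $(r,r+1)$ and $(r,c)$, $c>r+1$, entries exactly as the paper does (your column-operation phrasing is just the paper's entry-by-entry sum $\sum_j \tp_{r,j}(x)T^{(-1)}_{j,c}(x)$ in different clothing). No gaps.
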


\begin{proof}
We argue by induction on $s$. When $s=1$, there is nothing to prove. Assuming that the result is true for some $s-1$ and noticing that $\tP_{s-1}$ is such a matrix obtained in the $(s-1)$-th Step, we know that $\tp_{r,r+1}(x)=1$ for all $r\le s-2$ and that $\tp_{r,c}(x)=0$ for all $r\le s-2$ and $c>r+1$.

It is obvious that the first $s-1$ rows of $T(xq^{-m}) \tP_{s-1}$ are identical to the first $s-1$ rows of $\tP_{s-1}$. Let the $(j,c)$-th entry of $T(x)^{-1}$ be $T_{j,c}^{(-1)}(x)$. 

For $r\le s-1$, the $(r,c)$-th entry of $\tP_{s}=T(xq^{-m}) \tP_{s-1} T(x)^{-1}$ is given by
$$\sum_{j=1}^k \tp_{r,j}(x)T_{j,c}^{(-1)}(x).$$

If $c=r+1$, then the only non-zero contribution in the above summation is
\begin{align*}
\tp_{r,r+1}(x)T_{r+1,r+1}^{(-1)}(x)&=\begin{cases}
1\cdot 1 & \text{if $r\le s-2$}\\
\tp_{s-1,s}(x)\cdot \frac{1}{\tp_{s-1,s}(x)} & \text{if $r=s-1$}
\end{cases}\\[0.5em]
&=1.
\end{align*}

If $c>r+1$, then we first treat the $r=s-1$ case. One has
\begin{align*}
\sum_{j=1}^k \tp_{s-1,j}(x)T_{j,c}^{(-1)}(x)&=\tp_{s-1,s}(x)T_{s,c}^{(-1)}(x)+\tp_{s-1,c}(x)T_{c,c}^{(-1)}(x)\\
&=\tp_{s-1,s}(x)\cdot \left(-\frac{\tp_{s-1,c}(x)}{\tp_{s-1,s}(x)}\right)+\tp_{s-1,c}(x)\cdot 1\\
&=0.
\end{align*}
For $r\le s-2$, we simply notice that $\tp_{r,j}(x)=0$ for $j>r+1$ from our assumption and that $T_{j,c}^{(-1)}(x)=0$ for $j\le r+1$ since $j\le s-1$ and $j\ne c$.
\end{proof}

Let $\tp_{i,j}^{\text{New}}(x)$ be the $(i,j)$-th entry of $\tP_{s}$. If $\tp_{s,t}^{\text{New}}(x)=0$ for all $t\ge s+1$, then we shall stop at this place by noticing with the help of Claim \ref{claim:q-diff} that
\begin{align*}
u_1(x)&=\tp_{1,1}^{\text{New}}(x)u_1(xq^m)+u_2(xq^m),\\[0.5em]
u_2(x)&=\tp_{2,1}^{\text{New}}(x)u_1(xq^m)+\tp_{2,2}^{\text{New}}(x)u_2(xq^m)+u_3(xq^m),\\[0.5em]
&\;\;\vdots\\[0.5em]
u_{s-1}(x)&=\tp_{s-1,1}^{\text{New}}(x)u_1(xq^m)+\tp_{s-1,2}^{\text{New}}(x)u_2(xq^m)+\cdots+u_{s}(xq^m),\\[0.5em]
u_{s}(x)&=\tp_{s,1}^{\text{New}}(x)u_1(xq^m)+\tp_{s,2}^{\text{New}}(x)u_2(xq^m)+\cdots+\tp_{s,s}^{\text{New}}(x)u_{s}(xq^m).
\end{align*}

\textbf{Final setup.} Assuming that the above program is terminated after $\ell$ ($\le k$) steps, we obtain a new system of $q$-difference equations
{\small\begin{align*}
u_1(x)&=r_{1,1}(x)u_1(xq^m)+u_2(xq^m),\\[0.5em]
u_2(x)&=r_{2,1}(x)u_1(xq^m)+r_{2,2}(x)u_2(xq^m)+u_3(xq^m),\\[0.5em]
&\;\;\vdots\\[0.5em]
u_{\ell-1}(x)&=r_{\ell-1,1}(x)u_1(xq^m)+r_{\ell-1,2}(x)u_2(xq^m)+\cdots+r_{\ell-1,\ell-1}(x)u_{\ell-1}(x)+u_{\ell}(xq^m),\\[0.5em]
u_{\ell}(x)&=r_{\ell,1}(x)u_1(xq^m)+r_{\ell,2}(x)u_2(xq^m)+\cdots+r_{\ell,\ell-1}(x)u_{\ell-1}(xq^m)+r_{\ell,\ell}(x)u_{\ell}(xq^m),
\end{align*}}
where the $r$'s are in $x$ and $q$.

With this new system, a $q$-difference equation involving merely $u_1$ can be obtained by simple eliminations. Finally, we recall that $F_1(x)$ is set to be $u_1(x)$ in Step (1).

\section{Kanade--Russell conjectures}

We may summarize the following four types of partition sets under difference-at-a-distance themes from the Kanade--Russell conjectures.

\smallskip

\noindent\textbullet{~}\;\textsc{Type I}:

Partitions with difference at least $3$ at distance $2$ such that if two consecutive parts differ by at most $1$, then their sum is divisible by $3$.

\smallskip

\noindent\textbullet{~}\;\textsc{Type II}:

Partitions with difference at least $3$ at distance $2$ such that if two consecutive parts differ by at most $1$, then their sum is congruent to $2$ modulo $3$.

\smallskip

\noindent\textbullet{~}\;\textsc{Type III}:

Partitions with difference at least $3$ at distance $3$ such that if parts at distance $2$ differ by at most $1$, then the sum of the two parts and their intermediate part is congruent to $1$ modulo $3$.

\smallskip

\noindent\textbullet{~}\;\textsc{Type IV}:

Partitions with difference at least $3$ at distance $3$ such that if parts at distance $2$ differ by at most $1$, then the sum of the two parts and their intermediate part is congruent to $2$ modulo $3$.

\medskip

In this section, we investigate partition sets of types I, II, III and IV in the setting of linked partition ideals.

\subsection{Partition set of type I}

Recall that the partition set of type I is the set of partitions with difference at least $3$ at distance $2$ such that if two consecutive parts differ by at most $1$, then their sum is divisible by $3$. In other words, if $\lambda=\lambda_1+\lambda_2+\cdots+\lambda_\ell$ is in this partition set, then
\begin{enumerate}[label=(\roman*)]
\item $\lambda_i-\lambda_{i+2}\ge 3$;
\item $\lambda_i-\lambda_{i+1}\le 1$ implies $\lambda_i+\lambda_{i+1}\equiv 0 \pmod{3}$.
\end{enumerate}

Let $\ps_{T_{\mathrm{I}}}$ denote the partition set of type I.

\begin{claim}
$\ps_{T_{\mathrm{I}}}$ is a partition ideal of modulus $3$.
\end{claim}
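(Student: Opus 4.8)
The plan is to verify the two defining conditions of a partition ideal of modulus $3$, namely (a) that $\ps_{T_{\mathrm{I}}}$ is a partition ideal, i.e.\ closed under the partial order $\le$, and (b) that $\phi^3\ps_{T_{\mathrm{I}}}=\ps_{T_{\mathrm{I}}}^{(3)}$.

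For part (a), suppose $\lambda\in\ps_{T_{\mathrm{I}}}$ and $\pi\le\lambda$, so that $\sharp_k(\pi)\le\sharp_k(\lambda)$ for all $k$; I must show $\pi\in\ps_{T_{\mathrm{I}}}$. The key observation is that the parts of $\pi$ form a subsequence of the parts of $\lambda$ (as multisets, $\pi$ is obtained from $\lambda$ by deleting some parts). Write $\pi=\pi_1+\pi_2+\cdots+\pi_t$. For condition (i), each consecutive pair $\pi_i,\pi_{i+1},\pi_{i+2}$ comes from parts $\lambda_a\ge\lambda_b\ge\lambda_c$ of $\lambda$ with $a<b<c$, hence $\lambda_a\ge\lambda_{a+2}\ge\lambda_c$, so $\pi_i-\pi_{i+2}=\lambda_a-\lambda_c\ge\lambda_a-\lambda_{a+2}\ge 3$. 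For condition (ii), if $\pi_i-\pi_{i+1}\le 1$, then with $\pi_i=\lambda_a$ and $\pi_{i+1}=\lambda_b$, $a<b$, we have $\lambda_a\ge\lambda_{a+1}\ge\lambda_b$, so $0\le\lambda_a-\lambda_{a+1}\le\lambda_a-\lambda_b\le 1$; by condition (ii) applied to $\lambda$ this forces $\lambda_a+\lambda_{a+1}\equiv 0\pmod 3$, and moreover $\lambda_{a+1}=\lambda_b$ (the integers squeezed between $\lambda_a$ and $\lambda_a-1$ with $\lambda_a\ge\lambda_{a+1}\ge\lambda_b$ are equal when $\lambda_a-\lambda_b\le 1$ forces them together — one checks the small cases $\lambda_a-\lambda_b\in\{0,1\}$ directly), whence $\pi_i+\pi_{i+1}=\lambda_a+\lambda_b=\lambda_a+\lambda_{a+1}\equiv 0\pmod 3$. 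Thus $\pi\in\ps_{T_{\mathrm{I}}}$, and $\ps_{T_{\mathrm{I}}}$ is a partition ideal. (The one place needing a little care is the claim $\lambda_{a+1}=\lambda_b$; if instead $\lambda_a-\lambda_b=1$ and $\lambda_{a+1}=\lambda_a$, then we would use $\lambda_{a+1}+\lambda_{a+2}$ — but $\lambda_a-\lambda_{a+2}\ge 3$ and $\lambda_{a+2}\le\lambda_b=\lambda_a-1$ give $\lambda_a-\lambda_{a+2}\ge 2$ only when... — this is exactly the subtle sub-case, and I address it below.)

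For part (b), I show both inclusions. If $\lambda\in\ps_{T_{\mathrm{I}}}$, then $\phi^3\lambda$ adds $3$ to every part; conditions (i) and (ii) are preserved since both differences $\lambda_i-\lambda_{i+2}$ and $\lambda_i-\lambda_{i+1}$ are unchanged and sums change by a multiple of $6$ hence stay $\equiv 0\pmod 3$ exactly when they did before; and every part of $\phi^3\lambda$ is $\ge 4>3$, so $\phi^3\lambda\in\ps_{T_{\mathrm{I}}}^{(3)}$. Conversely, if $\mu\in\ps_{T_{\mathrm{I}}}^{(3)}$, then either $\mu=\emptyset=\phi^3\emptyset$, or every part of $\mu$ is $\ge 4$; set $\lambda=\phi^{-3}\mu$ (subtract $3$ from each part), which has positive parts, and the same invariance of conditions (i), (ii) shows $\lambda\in\ps_{T_{\mathrm{I}}}$ with $\phi^3\lambda=\mu$. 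Hence $\phi^3\ps_{T_{\mathrm{I}}}=\ps_{T_{\mathrm{I}}}^{(3)}$, and modulus $3$ is established.

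The main obstacle is the verification of condition (ii) in part (a): when one deletes parts from $\lambda$, two parts that were \emph{not} consecutive in $\lambda$ become consecutive in $\pi$, and one must argue that the modular constraint on their sum still holds. The resolution is that if $\pi_i=\lambda_a$ and $\pi_{i+1}=\lambda_b$ with $a<b$ and $\lambda_a-\lambda_b\le 1$, then \emph{all} the intermediate parts $\lambda_{a},\lambda_{a+1},\ldots,\lambda_b$ are squeezed into the two-element interval $\{\lambda_b,\lambda_b+1\}$; in particular $b\le a+2$ is impossible to escape via condition (i) because $\lambda_a-\lambda_{a+2}\ge 3$ would contradict $\lambda_a-\lambda_b\le 1$ once $b\ge a+2$, forcing $b=a+1$. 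So in fact deleting parts can never make two parts consecutive unless they were already consecutive in $\lambda$ (relative to the squeeze), and condition (ii) transfers directly. Writing this squeeze argument cleanly — essentially showing $b=a+1$ whenever $\lambda_a-\lambda_b\le 1$ — is the crux; everything else is a routine check that the relevant differences and sums are monotone or invariant under passing to subsequences and under $\phi^{\pm 3}$.
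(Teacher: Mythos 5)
Your proof is correct and takes essentially the same route as the paper: the only real content is that two parts which become adjacent in $\pi$ after deletions were at distance at least $2$ in $\lambda$, hence differ by at least $3$, so condition (ii) never newly applies --- your final paragraph states this cleanly (forcing $b=a+1$), which makes the detour about $\lambda_{a+1}=\lambda_b$ in your middle paragraph unnecessary. The paper reduces to deleting a single part and dismisses the modulus-$3$ check as trivial, whereas you treat general sub-multisets and spell out the $\phi^{\pm 3}$ invariance, but the underlying argument is the same.
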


\begin{proof}
We first prove that $\ps_{T_{\mathrm{I}}}$ is a partition ideal. It suffices to show that for any $\lambda=\lambda_1+\lambda_2+\cdots+\lambda_\ell$ in $\ps_{T_{\mathrm{I}}}$, if we delete a part from $\lambda$, the resulting partition $\tilde{\lambda}$ is still in $\ps_{T_{\mathrm{I}}}$. Obviously, if the deleted part is $\lambda_1$ or $\lambda_\ell$, then $\tilde{\lambda}\in\ps_{T_{\mathrm{I}}}$. Hence, we may assume that $\tilde{\lambda}=\lambda_1+\cdots+\lambda_{k-1}+\lambda_{k+1}+\cdots+\lambda_\ell$ with $\lambda_k$ not being $\lambda_1$ or $\lambda_\ell$. To see that $\tilde{\lambda}$ satisfies the first condition, it suffices to check that $\lambda_{k-2}-\lambda_{k+1}\ge \lambda_{k-1}-\lambda_{k+1}\ge 3$ and $\lambda_{k-1}-\lambda_{k+2}\ge \lambda_{k-1}-\lambda_{k+1}\ge 3$. On the other hand, $\lambda_{k-1}-\lambda_{k+1}\ge 3$ and the fact that $\lambda\in\ps_{T_{\mathrm{I}}}$ ensure the second condition. Hence $\tilde{\lambda}\in\ps_{T_{\mathrm{I}}}$, as desired. Finally, the fact that $\ps_{T_{\mathrm{I}}}$ has modulus $3$ is trivial.
\end{proof}

From the definition of $\ps_{T_{\mathrm{I}}}$, it is straightforward to observe the following facts.

\begin{claim}
The $L_{\ps_{T_{\mathrm{I}}}}$ corresponding to modulus $3$ equals
$$\{\emptyset,\;1,\;2+1,\;3+1,\;2,\;3,\;3+3\}.$$
\end{claim}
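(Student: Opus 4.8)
The plan is a direct finite enumeration, made tractable by one structural observation. By the definition of $L_{\is,m}$ applied with $\is=\ps_{T_{\mathrm{I}}}$ and $m=3$, the set $L_{\ps_{T_{\mathrm{I}}}}$ is precisely the collection of partitions lying in $\ps_{T_{\mathrm{I}}}$ all of whose parts are at most $3$; in particular every part of such a partition is $1$, $2$, or $3$.

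The first step is to bound the number of parts. If $\lambda=\lambda_1+\lambda_2+\cdots+\lambda_\ell\in\ps_{T_{\mathrm{I}}}$ has $\ell\ge 3$ and every $\lambda_i\le 3$, then $\lambda_1-\lambda_3\le 3-1=2<3$, contradicting condition (i). Hence every $\lambda\in L_{\ps_{T_{\mathrm{I}}}}$ has at most two parts, and there are only $1+3+6=10$ candidates to examine: the empty partition $\emptyset$; the one-part partitions $1$, $2$, $3$; and the two-part partitions $1+1$, $2+1$, $2+2$, $3+1$, $3+2$, $3+3$.

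The second step is to apply condition (ii) to this short list. Condition (i) holds vacuously once $\ell\le 2$, so only (ii) is in play, and it is nontrivial only for the two-part partitions whose parts differ by at most $1$, namely $1+1$, $2+1$, $2+2$, $3+2$, $3+3$. Computing the part-sums: $1+1$ gives $2$, $2+2$ gives $4$, and $3+2$ gives $5$, none divisible by $3$, so these three are excluded from $\ps_{T_{\mathrm{I}}}$; whereas $2+1$ gives $3$ and $3+3$ gives $6$, both divisible by $3$, so both are retained; and $3+1$ is retained automatically since its parts differ by $2>1$. The empty partition and the one-part partitions are trivially in $\ps_{T_{\mathrm{I}}}$. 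Collecting the survivors yields exactly $\{\emptyset,\,1,\,2,\,3,\,2+1,\,3+1,\,3+3\}$, which is the asserted set.

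There is no genuine obstacle in this argument; the only care needed is in the case check of the second step, and the real content is the observation in the first step that condition (i) forces partitions with all parts $\le 3$ to have at most two parts, collapsing the search to a list of ten.
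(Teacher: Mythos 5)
Your enumeration is correct, and it fills in exactly the routine verification that the paper leaves implicit (the paper states this claim without proof, remarking only that it is ``straightforward to observe'' from the definition of $\ps_{T_{\mathrm{I}}}$). The key reduction --- that condition (i) forces any member of $L_{\ps_{T_{\mathrm{I}}}}$ to have at most two parts, after which only condition (ii) needs checking on the five near-equal pairs --- is the natural route and matches what the authors intend.
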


\begin{claim}\label{claim:T1-3}
The span and linking set of partitions in $L_{\ps_{T_{\mathrm{I}}}}$ are given as follows.
\begin{equation*}
\begin{array}{lp{0.5cm}cp{0.5cm}l}
&& \text{span} && \quad\quad\quad\quad\text{linking set}\\
\pi_0 = \emptyset && 1 && \{\pi_0,\; \pi_1,\; \pi_2,\; \pi_3,\; \pi_4,\; \pi_5,\; \pi_6\}\\
\pi_1 = 1 && 1 && \{\pi_0,\; \pi_1,\; \pi_2,\; \pi_3,\; \pi_4,\; \pi_5,\; \pi_6\}\\
\pi_2 = 2+1 && 1 && \{\pi_0,\; \pi_1,\; \pi_2,\; \pi_3,\; \pi_4,\; \pi_5,\; \pi_6\}\\
\pi_3 = 3+1 && 1 && \{\pi_0,\; \pi_4,\; \pi_5,\; \pi_6\}\\
\pi_4 = 2 && 1 && \{\pi_0,\; \pi_1,\; \pi_2,\; \pi_3,\; \pi_4,\; \pi_5,\; \pi_6\}\\
\pi_5 = 3 && 1 && \{\pi_0,\; \pi_4,\; \pi_5,\; \pi_6\}\\
\pi_6 = 3+3 && 1 && \{\pi_0,\; \pi_5,\; \pi_6\}
\end{array}
\end{equation*}
\end{claim}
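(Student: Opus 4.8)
The plan is to verify that, for each $\pi\in L_{\ps_{T_{\mathrm{I}}}}$, the data $l(\pi)=1$ together with the tabulated set satisfy clause~(iii) of Definition~\ref{def:linked}, with that set minimal; in view of the two preceding claims this also confirms that $\ps_{T_{\mathrm{I}}}$ is a linked partition ideal of modulus~$3$. First I would record the decomposition making the span~$1$. Given $\lambda\in\ps_{T_{\mathrm{I}}}$ with $\tail_3(\lambda)=\pi$, split the parts of $\lambda$ into those $\le 3$ (which are exactly the parts of $\pi$) and those $\ge 4$, and subtract $3$ from each of the latter to form a partition $\tilde\pi$; then $\lambda=\pi\oplus(\phi^3\tilde\pi)$. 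Conditions~(i) and~(ii) defining $\ps_{T_{\mathrm{I}}}$ only compare parts at distance at most~$2$, and the parts of $\pi$ and of $\phi^3\tilde\pi$ lie on opposite sides of the value gap between $3$ and~$4$, so these conditions split into instances internal to $\phi^3\tilde\pi$, instances internal to $\pi$, and finitely many \emph{interface} instances comparing a part $\ge 4$ of $\phi^3\tilde\pi$ with a part $\le 3$ of $\pi$. Since~(i) is translation invariant and~(ii) is unaffected by adding $3$ to every part (each relevant two-part sum changes by $6\equiv 0\pmod 3$), the internal instances for $\phi^3\tilde\pi$ amount exactly to $\tilde\pi\in\ps_{T_{\mathrm{I}}}$, while those internal to $\pi$ hold automatically. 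Thus $l(\pi)=1$ is admissible, and it is in fact forced, since any larger span would demand subtracting at least~$6$ from parts lying in $\{4,5,6\}$.

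Next I would reduce the interface to a dependence on $3$-tails alone. Every element of $L_{\ps_{T_{\mathrm{I}}}}$ has at most two parts, so the interface instances involve only the two largest parts $p_1\ge p_2$ of $\pi$ (if present) and the two smallest parts of $\phi^3\tilde\pi$, which are $3$ plus the two smallest parts of $\tilde\pi$; and since any part of $\tilde\pi$ exceeding~$3$ is at least~$4$, hence at least~$7$ after the shift, it is automatically compatible at distance $\le 2$ with every part $\le 3$ of $\pi$. Hence, for $\varpi\in L_{\ps_{T_{\mathrm{I}}}}$, the assertion ``$\pi\oplus(\phi^3\tilde\pi)\in\ps_{T_{\mathrm{I}}}$ for every $\tilde\pi\in\ps_{T_{\mathrm{I}}}$ with $\tail_3(\tilde\pi)=\varpi$'' is equivalent to the single test ``$\pi\oplus(\phi^3\varpi)\in\ps_{T_{\mathrm{I}}}$''. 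Combined with the first step and with the uniqueness of the decomposition $\lambda=\pi\oplus(\phi^3\tilde\pi)$, this shows that the partitions of $\ps_{T_{\mathrm{I}}}$ with $3$-tail $\pi$ are precisely those $\pi\oplus(\phi^3\tilde\pi)$ with $\tilde\pi\in\ps_{T_{\mathrm{I}}}$ and $\tail_3(\tilde\pi)$ in $\mathcal{L}_{\ps_{T_{\mathrm{I}}}}(\pi):=\{\varpi\in L_{\ps_{T_{\mathrm{I}}}}:\pi\oplus(\phi^3\varpi)\in\ps_{T_{\mathrm{I}}}\}$, and that no smaller set works.

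It then remains to compute this set for the seven choices of $\pi$. Writing $\pi=p_1+\cdots+p_r$ and $\varpi=v_1+\cdots+v_s$ with weakly decreasing parts, the test is trivial when $r=0$ or $s=0$; otherwise $\pi\oplus(\phi^3\varpi)$ has parts $v_1+3\ge\cdots\ge v_s+3\ge p_1\ge\cdots\ge p_r$, and membership in $\ps_{T_{\mathrm{I}}}$ unwinds to three explicit requirements: $v_{s-1}\ge p_1$ when $s\ge 2$, and $v_s\ge p_2$ when $r\ge 2$ (the two distance-$2$ constraints straddling the interface), and ``$p_1=3$ and $v_s=1$ do not both hold'' (condition~(ii): $v_s+3$ and $p_1$ can differ by at most~$1$ only in that case, and then $(v_s+3)+p_1=7\not\equiv 0\pmod 3$). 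Running these three tests over the $49$ pairs with $\pi,\varpi\in\{\pi_0,\ldots,\pi_6\}$, and noting that $\pi_0=\emptyset$ always passes, reproduces the table; for example $\pi_1\notin\mathcal{L}_{\ps_{T_{\mathrm{I}}}}(\pi_3)$ because $p_1=3$ and $v_s=1$, while $\pi_3\notin\mathcal{L}_{\ps_{T_{\mathrm{I}}}}(\pi_6)$ because $v_s=1<3=p_2$.

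I expect the main obstacle to be the reduction in the second paragraph above, that only $\tail_3(\tilde\pi)$ --- and not the larger parts of $\tilde\pi$ --- affects the interface: making this airtight needs a short but somewhat fiddly case split according to whether $\pi$ and $\varpi$ have zero, one, or two parts, checking in each boundary case that the third-smallest and later parts of $\phi^3\tilde\pi$ are too large to couple with $\pi$ at distance $\le 2$. Everything else is elementary and mechanical.
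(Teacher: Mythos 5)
Your proposal is correct. The paper offers no argument for this claim at all --- it is introduced with ``From the definition of $\ps_{T_{\mathrm{I}}}$, it is straightforward to observe the following facts'' --- so there is nothing to compare against; your write-up simply supplies the verification the authors left implicit. The two points that actually need care are both handled properly: the reduction of the interface conditions to a dependence on $\tail_3(\tilde\pi)$ alone (valid because a part of $\tilde\pi$ exceeding $3$ becomes $\ge 7$ after the shift, hence trivially satisfies both the distance-$2$ gap condition and the vacuity of condition~(ii) against any part $\le 3$), and the observation that the decomposition $\lambda=\pi\oplus(\phi^3\tilde\pi)$ is forced by the value gap at $3$/$4$, which is what gives minimality of the linking sets. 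Your three interface tests ($v_{s-1}\ge p_1$, $v_s\ge p_2$, and the exclusion of $p_1=3$ with $v_s=1$, since then the sum is $7\not\equiv 0\pmod 3$) do reproduce the table when run over all pairs.
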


Let us denote by $H_i(x)=H_i(x,q)$ the generating function of partitions $\lambda$ in $\ps_{T_{\mathrm{I}}}$ with $\tail_m(\lambda)=\pi_i$ for $i=0,1,\ldots, 6$ where the $\pi_i$'s are as defined in Claim \ref{claim:T1-3}.

Following \eqref{eq:important-gf-2}, we have
{\footnotesize\begin{align}
H_0(x)&=H_0(xq^3)+H_1(xq^3)+H_2(xq^3)+H_3(xq^3)+H_4(xq^3)+H_5(xq^3)+H_6(xq^3),\notag\\
\label{eq:T1H0}\\
x^{-1}q^{-1}H_1(x)&=H_0(xq^3)+H_1(xq^3)+H_2(xq^3)+H_3(xq^3)+H_4(xq^3)+H_5(xq^3)+H_6(xq^3),\notag\\
\label{eq:T1H1}\\
x^{-2}q^{-3}H_2(x)&=H_0(xq^3)+H_1(xq^3)+H_2(xq^3)+H_3(xq^3)+H_4(xq^3)+H_5(xq^3)+H_6(xq^3),\notag\\
\label{eq:T1H2}\\
x^{-2}q^{-4}H_3(x)&=H_0(xq^3)+H_4(xq^3)+H_5(xq^3)+H_6(xq^3),\notag\\
\label{eq:T1H3}\\
x^{-1}q^{-2}H_4(x)&=H_0(xq^3)+H_1(xq^3)+H_2(xq^3)+H_3(xq^3)+H_4(xq^3)+H_5(xq^3)+H_6(xq^3),\notag\\
\label{eq:T1H4}\\
x^{-1}q^{-3}H_5(x)&=H_0(xq^3)+H_4(xq^3)+H_5(xq^3)+H_6(xq^3),\notag\\
\label{eq:T1H5}\\
x^{-2}q^{-6}H_6(x)&=H_0(xq^3)+H_5(xq^3)+H_6(xq^3).\notag\label{eq:T1H6}\\
\end{align}}

Let $G_{\ps_{T_{\mathrm{I},1}}}(x)=G_{\ps_{T_{\mathrm{I},1}}}(x,q)$ (resp.~$G_{\ps_{T_{\mathrm{I},2}}}(x)$, $G_{\ps_{T_{\mathrm{I},3}}}(x)$) denote the generating function of partitions in $\ps_{T_{\mathrm{I}}}$ whose smallest part is at least $1$ (resp.~$2$, $3$).

It follows that
\begin{align}
G_{\ps_{T_{\mathrm{I},1}}}(x)&=H_0(x)+H_1(x)+H_2(x)+H_3(x)+H_4(x)+H_5(x)+H_6(x)\nonumber\\
&=H_0(xq^{-3}),\label{eq:GT11}\\
G_{\ps_{T_{\mathrm{I},2}}}(x)&=H_0(x)+H_4(x)+H_5(x)+H_6(x)\nonumber\\
&=x^{-1}H_5(xq^{-3}),\label{eq:GT12}\\
G_{\ps_{T_{\mathrm{I},3}}}(x)&=H_0(x)+H_5(x)+H_6(x)\nonumber\\
&=x^{-2}H_6(xq^{-3}).\label{eq:GT13}
\end{align}
Hence, to determine $q$-difference equations satisfied by $G_{\ps_{T_{\mathrm{I},1}}}(x)$, $G_{\ps_{T_{\mathrm{I},2}}}(x)$ and $G_{\ps_{T_{\mathrm{I},3}}}(x)$, it suffices to find $q$-difference equations for $H_0(x)$, $H_5(x)$ and $H_6(x)$, respectively.

We now deduce from \eqref{eq:T1H0}, \eqref{eq:T1H1}, \eqref{eq:T1H2} and \eqref{eq:T1H4} that
\begin{align}
H_1(x)&=xqH_0(x),\\
H_2(x)&=x^2 q^3 H_0(x),\\
H_4(x)&=x q^2 H_0(x),\\
\intertext{and likewise from \eqref{eq:T1H3} and \eqref{eq:T1H5} that}
H_3(x)&=xqH_5(x).
\end{align}

As a result, the system \eqref{eq:T1H0}--\eqref{eq:T1H6} can be rewritten as
\begin{framed}
{\small\begin{align}
H_0(x)&=(1+xq^4+x^2q^9+xq^5)H_0(xq^3)+(1+xq^4)H_5(xq^3)+H_6(xq^3),\label{eq:T1H0New}\\
H_5(x)&=(xq^3+x^2q^8)H_0(xq^3)+xq^3H_5(xq^3)+xq^3H_6(xq^3),\label{eq:T1H5New}\\
H_6(x)&=x^2q^6H_0(xq^3)+x^2q^6H_5(xq^3)+x^2q^6H_6(xq^3).\label{eq:T1H6New}
\end{align}}
\end{framed}

We first use the algorithm in Section \ref{sec:q-diff} to deduce the $q$-difference equation satisfied by $H_0(x)$ and accordingly $G_{\ps_{T_{\mathrm{I},1}}}(x)$.

\medskip

\textbf{Step (1).} We put $u_0(x)=H_0(x)$. Then

\begin{equation}
\begin{pmatrix}
u_0(x)\\
H_5(x)\\
H_6(x)
\end{pmatrix}
=\tP_1
\begin{pmatrix}
u_0(xq^3)\\
H_5(xq^3)\\
H_6(xq^3)
\end{pmatrix},
\end{equation}
where
\[
\tP_1=\begin{pmatrix}
1+xq^4+x^2q^9+xq^5 & 1+xq^4 & 1\\
xq^3+x^2q^8 & xq^3 & xq^3\\
x^2q^6 & x^2q^6 & x^2q^6
\end{pmatrix}.
\]

\textbf{Step (2).} We put $u_5(x)=(1+xq^4)H_5(xq^3)+H_6(xq^3)$. Then

\begin{equation}
\begin{pmatrix}
u_0(x)\\
u_5(x)\\
H_6(x)
\end{pmatrix}
=
\tP_2
\begin{pmatrix}
u_0(xq^3)\\
u_5(xq^3)\\
H_6(xq^3)
\end{pmatrix},
\end{equation}
where
\[
\tP_2=\begin{pmatrix}
1+xq^4+xq^5+x^2q^9 & 1 & 0\\[0.5em]
xq^3(1+xq+xq^3+xq^5+x^2q^6) & \frac{xq^3(1+xq+xq^3)}{1+xq^4} & \frac{x^2q^7(1+xq+xq^3)}{1+xq^4}\\[0.5em]
x^2q^6 & \frac{x^2q^6}{1+xq^4} & \frac{x^3q^{10}}{1+xq^4}
\end{pmatrix}.
\]

\textbf{Step (3).} We put $u_6(x)=\frac{x^2q^7(1+xq+xq^3)}{1+xq^4}H_6(xq^3)$. Then

\begin{equation}
\begin{pmatrix}
u_0(x)\\
u_5(x)\\
u_6(x)
\end{pmatrix}
=
\tP_3
\begin{pmatrix}
u_0(xq^3)\\
u_5(xq^3)\\
u_6(xq^3)
\end{pmatrix},
\end{equation}
where
\[
\tP_3=\begin{pmatrix}
1+xq^4+xq^5+x^2q^9 & 1 & 0\\[0.5em]
xq^3(1+xq+xq^3+xq^5+x^2q^6) & \frac{xq^3(1+xq+xq^3)}{1+xq^4} & 1\\[0.5em]
\frac{x^4q^7(1+x+xq^{-2})}{1+xq} & \frac{x^4q^7(1+x+xq^{-2})}{(1+xq)(1+xq^4)} & \frac{x^3q^4(1+x+xq^{-2})}{(1+xq)(1+xq+xq^3)}
\end{pmatrix}.
\]

\medskip

For convenience, we write
\begin{align}
u_0(x)&=r_{0,0}(x)u_0(xq^3)+u_5(xq^3),\label{eq:T1NewNewNew0}\\
u_5(x)&=r_{5,0}(x)u_0(xq^3)+r_{5,5}(x)u_5(xq^3)+u_6(xq^3),\label{eq:T1NewNewNew5}\\
u_6(x)&=r_{6,0}(x)u_0(xq^3)+r_{6,5}(x)u_5(xq^3)+r_{6,6}u_6(xq^3),\label{eq:T1NewNewNew6}
\end{align}
where the coefficients are rational functions in $x$ and $q$ given by $\tP_3$.

Noting from \eqref{eq:GT11} that
\begin{align}\label{eq:u0-eli}
G_{\ps_{T_{\mathrm{I},1}}}(x)=H_0(xq^{-3})=u_0(xq^{-3}),
\end{align}
we may eliminate $u_5(x)$ by \eqref{eq:T1NewNewNew0},
\begin{align}\label{eq:u5-eli}
u_5(x)=G_{\ps_{T_{\mathrm{I},1}}}(x)-r_{0,0}(xq^{-3}) G_{\ps_{T_{\mathrm{I},1}}}(xq^3).
\end{align}
Substituting \eqref{eq:u5-eli} into \eqref{eq:T1NewNewNew5}, we may eliminate $u_6(x)$,
\begin{align}
u_6(x)&=G_{\ps_{T_{\mathrm{I},1}}}(xq^{-3})-\left(r_{0,0}(xq^{-6})+r_{5,5}(xq^{-3})\right) G_{\ps_{T_{\mathrm{I},1}}}(x)\nonumber\\
&\quad+ \left(r_{0,0}(xq^{-3})r_{5,5}(xq^{-3})-r_{5,0}(xq^{-3})\right)G_{\ps_{T_{\mathrm{I},1}}}(xq^3).\label{eq:u6-eli}
\end{align}

Substituting \eqref{eq:u0-eli}, \eqref{eq:u5-eli} and \eqref{eq:u6-eli} into \eqref{eq:T1NewNewNew6}, we arrive at, after simplification, the following $q$-difference equation for $G_{\ps_{T_{\mathrm{I},1}}}(x)$.

\begin{theorem}\label{th:q-diff-T11}
It holds that
\begin{align}
p_0(x,q)G_{\ps_{T_{\mathrm{I},1}}}(x)+p_3(x,q)G_{\ps_{T_{\mathrm{I},1}}}(xq^3)&+p_6(x,q)G_{\ps_{T_{\mathrm{I},1}}}(x q^6)\nonumber\\
&+p_9(x,q)G_{\ps_{T_{\mathrm{I},1}}}(xq^9)=0,\label{eq:q-diff-T11}
\end{align}
where
\begin{align*}
p_0(x,q)&=1+x(q^4+q^6),\\
p_3(x,q)&=-1-x(q + q^2 + q^3 + q^4 + q^6)-x^2(q^3 + q^4 + q^5 + 2 q^6 + q^7 + q^8 + q^9)\\
&\quad-x^3(q^7 + q^9 + q^{10} + q^{12}),\\
p_6(x,q)&=x^3(q^{11} + q^{13})+x^4(q^{14} + q^{15} + q^{16} + q^{17} + q^{18})+x^5(q^{19} + q^{21}),\\
\intertext{and}
p_9(x,q)&=x^5 q^{27}+x^6(q^{28}+q^{30}).
\end{align*}
\end{theorem}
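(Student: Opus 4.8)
The plan is to recognize that the theorem is precisely the last elimination step in the reduction begun in this subsection, so essentially all the real work is already encoded in the triangular system \eqref{eq:T1NewNewNew0}--\eqref{eq:T1NewNewNew6}, whose coefficients $r_{0,0},r_{5,0},r_{5,5},r_{6,0},r_{6,5},r_{6,6}$ are the entries of the explicit matrix $\tP_3$. First I would record these six rational functions, noting that their denominators are built only from the factors $1+xq$, $1+xq^4$ and $1+xq+xq^3$. Next, since \eqref{eq:GT11} identifies $u_0(xq^{-3})$ with $G_{\ps_{T_{\mathrm{I},1}}}(x)$ — equivalently $u_0(x)=G_{\ps_{T_{\mathrm{I},1}}}(xq^3)$ — equation \eqref{eq:T1NewNewNew0} lets one solve for $u_5(x)$ and then \eqref{eq:T1NewNewNew5} for $u_6(x)$, giving \eqref{eq:u5-eli} and \eqref{eq:u6-eli}: both $u_5(x)$ and $u_6(x)$ become explicit $x,q$-rational combinations of $G_{\ps_{T_{\mathrm{I},1}}}(xq^{-3})$, $G_{\ps_{T_{\mathrm{I},1}}}(x)$, $G_{\ps_{T_{\mathrm{I},1}}}(xq^3)$.

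The core of the argument is then to substitute \eqref{eq:u0-eli}, \eqref{eq:u5-eli} and \eqref{eq:u6-eli}, together with their $q^3$-shifts, into the remaining relation \eqref{eq:T1NewNewNew6}. Because the highest shift on the left is $u_6(x)$ (which involves $G_{\ps_{T_{\mathrm{I},1}}}(xq^{-3})$) while the $u(xq^3)$ terms on the right reach $G_{\ps_{T_{\mathrm{I},1}}}(xq^6)$, the resulting identity is a homogeneous linear relation among the four consecutive dilates $G_{\ps_{T_{\mathrm{I},1}}}(xq^{-3})$, $G_{\ps_{T_{\mathrm{I},1}}}(x)$, $G_{\ps_{T_{\mathrm{I},1}}}(xq^3)$, $G_{\ps_{T_{\mathrm{I},1}}}(xq^6)$ with coefficients rational in $x$ and $q$. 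Replacing $x$ by $xq^3$ shifts this to a relation in $G_{\ps_{T_{\mathrm{I},1}}}(x)$, $G_{\ps_{T_{\mathrm{I},1}}}(xq^3)$, $G_{\ps_{T_{\mathrm{I},1}}}(xq^6)$, $G_{\ps_{T_{\mathrm{I},1}}}(xq^9)$, exactly the shifts in \eqref{eq:q-diff-T11}. Finally I would clear denominators by multiplying through by the least common multiple of the denominators of the relevant $r$'s at their various shifts — a product of factors of the shapes $1+xq^{a}$ and $1+xq^{b}+xq^{c}$ — and check that, after cancellation, the coefficient of each $G_{\ps_{T_{\mathrm{I},1}}}(xq^{3j})$ collapses to the stated polynomial $p_{3j}(x,q)$ (removing, if needed, an overall nonzero polynomial factor common to all four coefficients).

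I expect the only genuine obstacle to be the size and delicacy of this last simplification: one must verify that the common denominator divides the combined numerator cleanly, so that no spurious factor such as $1+xq+xq^3$ or $1+xq^4$ survives in any $p_{3j}$, and that the degrees and coefficients match the displayed expansions. This is pure bookkeeping rather than a conceptual difficulty, and it is of exactly the type the paper hands to \emph{Mathematica}; as an independent safeguard I would also expand both sides of \eqref{eq:q-diff-T11} as power series (in $q$, with $x$ kept, or vice versa) using the difference-at-a-distance description of $\ps_{T_{\mathrm{I}}}$ that defines $G_{\ps_{T_{\mathrm{I},1}}}$, and confirm agreement to high enough order to leave no doubt.
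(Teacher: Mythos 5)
Your proposal follows exactly the paper's own route: eliminate $u_5$ and $u_6$ from the triangular system \eqref{eq:T1NewNewNew0}--\eqref{eq:T1NewNewNew6} using \eqref{eq:u0-eli}, \eqref{eq:u5-eli}, \eqref{eq:u6-eli}, substitute into \eqref{eq:T1NewNewNew6}, shift $x\mapsto xq^3$, and clear denominators, with the final polynomial bookkeeping delegated to \textit{Mathematica}. This is correct and is essentially identical to what the authors do.
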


In the same manner, we may find the $q$-difference equations for $H_5(x)$ and $H_6(x)$, and accordingly $G_{\ps_{T_{\mathrm{I},2}}}(x)$ and $G_{\ps_{T_{\mathrm{I},3}}}(x)$.

\begin{theorem}
It holds that
\begin{align}
p_0(x,q)G_{\ps_{T_{\mathrm{I},2}}}(x)+p_3(x,q)G_{\ps_{T_{\mathrm{I},2}}}(xq^3)&+p_6(x,q)G_{\ps_{T_{\mathrm{I},2}}}(x q^6)\nonumber\\
&+p_9(x,q)G_{\ps_{T_{\mathrm{I},2}}}(xq^9)=0,
\end{align}
where
\begin{align*}
p_0(x,q)&=1+x(q^5+q^8),\\
p_3(x,q)&=-1-x(q^2 + q^3 + q^4 + q^5 + q^8)\nonumber\\
&\quad-x^2(2q^6 + q^7 + q^8 + q^9 + q^{10} + q^{11} + q^{12})-x^3(q^{11} + 2 q^{14} + q^{17}),\\
p_6(x,q)&=x^3(q^{16} + q^{17})+x^4(-q^{17} + q^{18} + q^{19} + q^{21} + q^{22} + q^{23} + q^{24})\\
&\quad+x^5(q^{26} + q^{29}),\\
\intertext{and}
p_9(x,q)&=x^5 q^{33}+x^6(q^{35}+q^{38}).
\end{align*}
\end{theorem}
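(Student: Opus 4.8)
The plan is to mimic exactly the derivation carried out for Theorem~\ref{th:q-diff-T11}, but now tracking $H_5$ (equivalently $G_{\ps_{T_{\mathrm{I},2}}}$) instead of $H_0$. The starting point is the same reduced $3\times 3$ system \eqref{eq:T1H0New}--\eqref{eq:T1H6New} in the unknowns $H_0$, $H_5$, $H_6$; the only change is that the elimination order must be reorganized so that $H_5$ plays the distinguished role. Concretely, I would re-run Steps (1)--(3) of the algorithm in Section~\ref{sec:q-diff} after permuting the components of the vector so that $H_5$ sits in the first slot: set $u_5(x)=H_5(x)$, and let $\tP_1$ be the correspondingly row/column-permuted version of the matrix appearing in Step (1) above. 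Then Steps (2) and (3) produce, via $\tP_s=T(xq^{-3})\tP_{s-1}T(x)^{-1}$, a new upper-Hessenberg-type system
\begin{align*}
u_5(x)&=r_{5,5}(x)u_5(xq^3)+u_a(xq^3),\\
u_a(x)&=r_{a,5}(x)u_5(xq^3)+r_{a,a}(x)u_a(xq^3)+u_b(xq^3),\\
u_b(x)&=r_{b,5}(x)u_5(xq^3)+r_{b,a}(x)u_a(xq^3)+r_{b,b}(x)u_b(xq^3),
\end{align*}
whose shape is guaranteed by Claim~\ref{claim:q-diff}.

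Next I would use \eqref{eq:GT12}, namely $G_{\ps_{T_{\mathrm{I},2}}}(x)=x^{-1}H_5(xq^{-3})=x^{-1}u_5(xq^{-3})$, to back-substitute. From the first equation one solves for $u_a(x)$ as a two-term $q$-linear combination of $G_{\ps_{T_{\mathrm{I},2}}}$ at arguments $x$ and $xq^3$ (with the appropriate powers of $x$ reinserted); substituting into the second equation solves for $u_b(x)$ as a three-term combination at $xq^{-3}$, $x$, $xq^3$; and substituting both into the third equation yields a four-term $q$-difference equation relating $G_{\ps_{T_{\mathrm{I},2}}}(x)$, $G_{\ps_{T_{\mathrm{I},2}}}(xq^3)$, $G_{\ps_{T_{\mathrm{I},2}}}(xq^6)$, $G_{\ps_{T_{\mathrm{I},2}}}(xq^9)$. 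After clearing denominators (the $r$'s are rational in $x$ and $q$, with denominators built from factors like $1+xq^j$) and collecting powers of $x$, one reads off the polynomial coefficients $p_0,p_3,p_6,p_9$ claimed in the statement. All of this is a finite rational-function computation that is most cleanly executed in \textit{Mathematica}.

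The only genuinely delicate point is bookkeeping: the algorithm of Section~\ref{sec:q-diff} may require a column swap at Step~(2) if the relevant pivot entry $\tp_{1,2}(x)$ vanishes, and one must check that the resulting pivots are not identically zero so that the matrices $T(x)$ are invertible; here the pivots are visibly nonzero (they are the same $1+xq^4$-type factors seen in $\tP_2$, $\tP_3$), so the algorithm terminates after three steps exactly as in the $H_0$ case. One should also verify that the final common denominator, which a priori could introduce spurious factors, cancels cleanly so that $p_0,p_3,p_6,p_9$ are genuine polynomials of the stated degrees in $x$; this cancellation is the analogue of the ``after simplification'' remark preceding Theorem~\ref{th:q-diff-T11} and is confirmed directly. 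Finally, as an independent check, one compares the $q$-series expansion of the solution of \eqref{eq:q-diff-T11}-type equation (normalized by $G_{\ps_{T_{\mathrm{I},2}}}(0)=1$) against the direct enumeration of type~II partitions with smallest part $\ge 2$ up to a high power of $q$; agreement to that order, together with the fact that the $q$-difference equation plus initial condition determines the series uniquely, completes the proof.
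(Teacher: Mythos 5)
Your proposal follows essentially the same route as the paper: the paper obtains this theorem ``in the same manner'' as Theorem \ref{th:q-diff-T11}, namely by running the elimination algorithm of Section \ref{sec:q-diff} on the reduced system \eqref{eq:T1H0New}--\eqref{eq:T1H6New} with $H_5$ placed in the distinguished first slot and then back-substituting via $G_{\ps_{T_{\mathrm{I},2}}}(x)=x^{-1}H_5(xq^{-3})$, exactly as you describe. The only blemish is a slip in your final consistency check, where you write ``type II partitions with smallest part $\ge 2$'' instead of type I; this does not affect the argument.
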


\begin{theorem}
It holds that
\begin{align}
p_0(x,q)G_{\ps_{T_{\mathrm{I},3}}}(x)+p_3(x,q)G_{\ps_{T_{\mathrm{I},3}}}(xq^3)&+p_6(x,q)G_{\ps_{T_{\mathrm{I},3}}}(x q^6)\nonumber\\
&+p_9(x,q)G_{\ps_{T_{\mathrm{I},3}}}(xq^9)=0,
\end{align}
where
\begin{align*}
p_0(x,q)&=1+x(q^6+q^7),\\
p_3(x,q)&=-1-x(q^3 + q^4 + q^5 + q^6 + q^7)\\
&\quad-x^2(q^6 + q^8 + 2q^9 + 2q^{10} + q^{11} + q^{12})-x^3(q^{12} + q^{13} + q^{15} + q^{16}),\\
p_6(x,q)&=x^3(q^{16} + q^{17})+x^4(q^{20} + q^{21} + q^{22} + q^{23} + q^{24})+x^5(q^{27} + q^{28}),\\
\intertext{and}
p_9(x,q)&=x^5 q^{36}+x^6(q^{39}+q^{40}).
\end{align*}
\end{theorem}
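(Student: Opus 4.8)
The plan is to mirror exactly the derivation already carried out for $G_{\ps_{T_{\mathrm{I},1}}}(x)$ in Theorem \ref{th:q-diff-T11}, but this time eliminating toward $H_6(x)$ instead of $H_0(x)=u_0(x)$, since by \eqref{eq:GT13} we have $G_{\ps_{T_{\mathrm{I},3}}}(x)=x^{-2}H_6(xq^{-3})$. Concretely, I would first apply the algorithm of Section \ref{sec:q-diff} to the system \eqref{eq:T1H0New}--\eqref{eq:T1H6New}, but reorder the three functions so that $H_6$ plays the role of the distinguished first function; equivalently, run the same three-step elimination and then re-express everything in terms of $u_6$. Since the algorithm of Section \ref{sec:q-diff} already produces the triangular system \eqref{eq:T1NewNewNew0}--\eqref{eq:T1NewNewNew6} with coefficients read off from $\tP_3$, one can simply take this system as given (invoking Claim \ref{claim:q-diff} for its shape) and carry out the linear eliminations in the order dictated by which $u_i$ corresponds to $H_6$.

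The key steps, in order, are: (1) identify from $\tP_3$ the rational functions $r_{0,0}, r_{5,0}, r_{5,5}, r_{6,0}, r_{6,5}, r_{6,6}$ appearing in \eqref{eq:T1NewNewNew0}--\eqref{eq:T1NewNewNew6}; (2) using $G_{\ps_{T_{\mathrm{I},3}}}(x)=x^{-2}H_6(xq^{-3})$ together with the defining substitutions $u_5(x)=(1+xq^4)H_5(xq^3)+H_6(xq^3)$ and $u_6(x)=\frac{x^2q^7(1+xq+xq^3)}{1+xq^4}H_6(xq^3)$, express $u_6(x)$, and then successively $u_5(x)$ and $u_0(x)$, as explicit linear combinations of shifted copies $G_{\ps_{T_{\mathrm{I},3}}}(xq^{3j})$; (3) substitute these three expressions into the one remaining relation of the triangular system to obtain a single homogeneous $q$-difference equation in $G_{\ps_{T_{\mathrm{I},3}}}$; (4) clear the denominators $1+xq$, $1+xq^4$, $1+xq+xq^3$ that the $r$'s carry and collect the coefficients of $G_{\ps_{T_{\mathrm{I},3}}}(x)$, $G_{\ps_{T_{\mathrm{I},3}}}(xq^3)$, $G_{\ps_{T_{\mathrm{I},3}}}(xq^6)$, $G_{\ps_{T_{\mathrm{I},3}}}(xq^9)$, thereby reading off $p_0$, $p_3$, $p_6$, $p_9$ and verifying they match the stated polynomials. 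The fact that only four shifts appear is automatic: the triangular system has length $3$, so eliminating two of the three $u$'s from a relation already containing a single $xq^3$-shift produces at most a third-order recurrence in the $xq^3$-scale, i.e. shifts $x, xq^3, xq^6, xq^9$.

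Since every individual operation here is a rational-function manipulation — multiplying out, substituting shifted arguments $x\mapsto xq^{3}$, and clearing three linear denominators — the proof is entirely mechanical and, as the paper notes, best executed in a computer algebra system; the bulk of the ``proof'' is simply the assertion that this elimination, performed honestly, yields the displayed $p_i(x,q)$. The one genuine subtlety, which I would flag explicitly, is the bookkeeping of the argument shifts: each $r_{i,j}(x)$ entering the triangular system must be evaluated at the correctly shifted argument ($xq^{-3}$ or $xq^{-6}$) when solving for $u_5$ and $u_6$, exactly as in \eqref{eq:u5-eli} and \eqref{eq:u6-eli}, and an off-by-one in these shifts is the only thing that can go wrong. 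A useful independent check is to verify that the candidate $q$-difference equation is consistent with the first several coefficients of the series expansion of $G_{\ps_{T_{\mathrm{I},3}}}(x,q)$ computed directly from the partition-theoretic definition of $\ps_{T_{\mathrm{I}}}$ restricted to smallest part $\ge 3$, which both confirms the polynomials $p_i$ and guards against sign or shift errors.

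The main obstacle, then, is not conceptual but computational: correctly tracking which shifted evaluation of each rational coefficient is needed and then certifying that the resulting rational expression, once its denominators are cleared, collapses to the polynomial coefficients $p_0,\dots,p_9$ of total $q$-degree as displayed. Everything else follows verbatim from the machinery of Section \ref{sec:q-diff} and the relations \eqref{eq:GT11}--\eqref{eq:GT13} already established.
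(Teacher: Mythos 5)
Your opening sentence --- reorder the system \eqref{eq:T1H0New}--\eqref{eq:T1H6New} so that $H_6$ occupies the distinguished first slot and re-run the Section \ref{sec:q-diff} algorithm from scratch --- is exactly what the paper does (its entire proof is the phrase ``in the same manner''), and had you stopped there the proposal would be fine. The problem is the ``equivalently'' clause and the detailed steps (1)--(4) built on it, which instead try to reuse the triangular system \eqref{eq:T1NewNewNew0}--\eqref{eq:T1NewNewNew6} already computed for $H_0$. That shortcut does not work. The analogues of \eqref{eq:u5-eli} and \eqref{eq:u6-eli} exist only because the triangular structure lets you solve \eqref{eq:T1NewNewNew0} for $u_5(xq^3)$ in terms of $u_0$ alone, then \eqref{eq:T1NewNewNew5} for $u_6(xq^3)$ in terms of $u_0$ and $u_5$; the elimination is inherently one-directional, toward the \emph{first} variable. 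Your step (2) asks to express $u_5(x)=(1+xq^4)H_5(xq^3)+H_6(xq^3)$ and $u_0(x)=H_0(x)$ as linear combinations of shifted copies of $G_{\ps_{T_{\mathrm{I},3}}}(x)=x^{-2}H_6(xq^{-3})$ ``from the defining substitutions,'' but $H_5$ and $H_0$ are generating functions of different tail classes and are not rational multiples of shifts of $H_6$; the only equation of the system that could be solved for them in terms of $u_6$, namely \eqref{eq:T1NewNewNew6}, contains \emph{both} $u_0(xq^3)$ and $u_5(xq^3)$ and so cannot isolate either. Only $u_6(x)$ itself is a single shifted copy of $G_{\ps_{T_{\mathrm{I},3}}}$.

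The fix is simply to commit to your first option: permute $(H_0,H_5,H_6)\mapsto(H_6,H_5,H_0)$ (or any order with $H_6$ first), rebuild $\tP_1,\tP_2,\tP_3$ and the transition matrices $T(x)$ for that ordering, and then the resulting triangular system eliminates toward $u_1=H_6$ exactly as in \eqref{eq:u0-eli}--\eqref{eq:u6-eli}, yielding a third-order equation in the $xq^3$-scale whose coefficients, after clearing denominators, are the stated $p_0,p_3,p_6,p_9$. Your remarks about shift bookkeeping, the order count (four shifts from a $3\times 3$ system), and the series-expansion sanity check are all sound and apply verbatim to the corrected route.
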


\begin{remark}
	A simple \textit{Mathematica} program is implemented to compute these recurrences. We are happy to supply our codes upon request.
\end{remark}

\subsection{Partition set of type II}

Recall that the partition set of type II is the set of partitions with difference at least $3$ at distance $2$ such that if two consecutive parts differ by at most $1$, then their sum is congruent to $2$ modulo $3$. In other words, if $\lambda=\lambda_1+\lambda_2+\cdots+\lambda_\ell$ is in this partition set, then
\begin{enumerate}[label=(\roman*)]
\item $\lambda_i-\lambda_{i+2}\ge 3$;
\item $\lambda_i-\lambda_{i+1}\le 1$ implies $\lambda_i+\lambda_{i+1}\equiv 2 \pmod{3}$.
\end{enumerate}

Let $\ps_{T_{\mathrm{II}}}$ denote the partition set of type II.

\begin{claim}
$\ps_{T_{\mathrm{II}}}$ is a partition ideal of modulus $3$.
\end{claim}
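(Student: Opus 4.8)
The plan is to mirror exactly the argument already carried out for $\ps_{T_{\mathrm{I}}}$, which only used the two defining conditions in a soft way. First I would show $\ps_{T_{\mathrm{II}}}$ is a partition ideal. As before, it suffices to check that deleting a single part $\lambda_k$ (with $1<k<\ell$) from a partition $\lambda=\lambda_1+\cdots+\lambda_\ell\in\ps_{T_{\mathrm{II}}}$ keeps us inside the set. For the distance-$2$ difference condition, the only new adjacencies created are $(\lambda_{k-2},\lambda_{k+1})$ and $(\lambda_{k-1},\lambda_{k+2})$, and since $\lambda_{k-2}\ge\lambda_{k-1}$ and $\lambda_{k+1}\ge\lambda_{k+2}$ we get $\lambda_{k-2}-\lambda_{k+1}\ge\lambda_{k-1}-\lambda_{k+1}\ge 3$ and $\lambda_{k-1}-\lambda_{k+2}\ge\lambda_{k-1}-\lambda_{k+1}\ge 3$, using $\lambda_{k-1}-\lambda_{k+1}\ge 3$ which holds in $\lambda$. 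For the congruence condition (ii), the only newly consecutive pair is $(\lambda_{k-1},\lambda_{k+1})$; but these already satisfy $\lambda_{k-1}-\lambda_{k+1}\ge 3>1$, so the hypothesis ``differ by at most $1$'' is vacuous for that pair, and every other consecutive pair of $\tilde\lambda$ was already a consecutive pair of $\lambda$. Hence $\tilde\lambda\in\ps_{T_{\mathrm{II}}}$.

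Next I would verify the modulus. Applying $\phi^3$ adds $3$ to every part, which preserves all pairwise differences (so condition (i) is untouched) and shifts every pairwise sum by $6\equiv 0\pmod 3$ (so condition (ii) is untouched); thus $\phi^3\ps_{T_{\mathrm{II}}}\subseteq\ps_{T_{\mathrm{II}}}^{(3)}$. Conversely, any $\lambda\in\ps_{T_{\mathrm{II}}}$ with smallest part $>3$ is $\phi^3\mu$ for $\mu=\lambda_1-3+\cdots+\lambda_\ell-3$, a partition whose parts are positive, and the same invariance of differences and sums mod $3$ shows $\mu\in\ps_{T_{\mathrm{II}}}$. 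Therefore $\phi^3\ps_{T_{\mathrm{II}}}=\ps_{T_{\mathrm{II}}}^{(3)}$, i.e.\ $\ps_{T_{\mathrm{II}}}$ has modulus $3$.

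There is no real obstacle here: the proof is essentially identical to the Type I case, the only point requiring a moment's care being that after deleting $\lambda_k$ the newly adjacent pair already differs by at least $3$, so condition (ii) imposes nothing new — this is what makes the deletion-closure argument go through regardless of whether the relevant residue class is $0$ or $2$ modulo $3$. For completeness I would remark that exactly the same reasoning will apply verbatim to the Type III and Type IV sets, with ``distance $2$'' replaced by ``distance $3$'' and the modulus accordingly reconsidered.
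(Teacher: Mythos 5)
Your proof is correct and follows exactly the route the paper takes: the paper proves this claim only for the Type I set and silently omits the (identical) argument for Type II, and your deletion-of-a-single-part argument together with the observation that the newly adjacent pair already differs by at least $3$ (so the congruence condition is vacuous for it) is precisely that argument, with the modulus verification spelled out rather than dismissed as trivial. No gaps.
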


\begin{claim}
The $L_{\ps_{T_{\mathrm{II}}}}$ corresponding to modulus $3$ equals
$$\{\emptyset,\;1,\;1+1,\;3+1,\;2,\;3+2,\;3\}.$$
\end{claim}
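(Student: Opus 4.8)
The plan is a direct finite enumeration, entirely parallel to the (implicit) verification of the analogous claim for $\ps_{T_{\mathrm{I}}}$. By definition, $L_{\ps_{T_{\mathrm{II}}}}$ consists of those $\lambda\in\ps_{T_{\mathrm{II}}}$ whose largest part is at most $3$, so every part of such a $\lambda$ lies in $\{1,2,3\}$. It therefore remains only to decide, among the finitely many partitions into parts from $\{1,2,3\}$, which ones satisfy conditions (i) and (ii) defining $\ps_{T_{\mathrm{II}}}$, namely $\lambda_i-\lambda_{i+2}\ge 3$ for all $i$, and $\lambda_i-\lambda_{i+1}\le 1\Rightarrow \lambda_i+\lambda_{i+1}\equiv 2\pmod 3$.

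First I would bound the number of parts. If $\lambda=\lambda_1+\cdots+\lambda_\ell$ has $\ell\ge 3$, then condition (i) gives $\lambda_1-\lambda_3\ge 3$, whence $\lambda_1\ge\lambda_3+3\ge 4$, contradicting $\lambda_1\le 3$. Hence $\ell\le 2$. The empty partition and the three one-part partitions $1$, $2$, $3$ satisfy (i) and (ii) vacuously, so all of them belong to $L_{\ps_{T_{\mathrm{II}}}}$.

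It then remains to run through the six two-part partitions with parts in $\{1,2,3\}$, namely $1+1$, $2+1$, $2+2$, $3+1$, $3+2$, $3+3$; here (i) is automatic since there is no $\lambda_3$, so only (ii) must be checked. For $3+1$ one has $\lambda_1-\lambda_2=2>1$, so (ii) imposes nothing and $3+1$ is admissible. For the remaining five, $\lambda_1-\lambda_2\le 1$, so (ii) requires $\lambda_1+\lambda_2\equiv 2\pmod 3$: this holds for $1+1$ (sum $2$) and $3+2$ (sum $5$), and fails for $2+1$ (sum $3$), $2+2$ (sum $4$), $3+3$ (sum $6$). Collecting the admissible partitions yields
$$L_{\ps_{T_{\mathrm{II}}}}=\{\emptyset,\;1,\;1+1,\;3+1,\;2,\;3+2,\;3\},$$
as claimed.

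There is essentially no obstacle here: once the length bound $\ell\le 2$ is in place, the rest is a bounded case check, and that bound is the only step requiring any thought. One may note in passing that being a partition ideal (established in the preceding claim) guarantees that deleting a part from an admissible $\lambda$ preserves admissibility, so no consistency issue can arise; but for the present claim the brute-force enumeration above is self-contained and suffices on its own.
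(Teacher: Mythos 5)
Your proof is correct and is exactly the routine finite enumeration that the paper leaves implicit (it states this claim as a straightforward observation without supplying a proof). The length bound $\ell\le 2$ from condition (i) and the case check of the six two-part partitions against condition (ii) are both carried out accurately and yield precisely the claimed set.
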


\begin{claim}\label{claim:T2-3}
The span and linking set of partitions in $L_{\ps_{T_{\mathrm{II}}}}$ are given as follows.
\begin{equation*}
\begin{array}{lp{0.5cm}cp{0.5cm}l}
&& \text{span} && \quad\quad\quad\quad\text{linking set}\\
\pi_0 = \emptyset && 1 && \{\pi_0,\; \pi_1,\; \pi_2,\; \pi_3,\; \pi_4,\; \pi_5,\; \pi_6\}\\
\pi_1 = 1 && 1 && \{\pi_0,\; \pi_1,\; \pi_2,\; \pi_3,\; \pi_4,\; \pi_5,\; \pi_6\}\\
\pi_2 = 1+1 && 1 && \{\pi_0,\; \pi_1,\; \pi_2,\; \pi_3,\; \pi_4,\; \pi_5,\; \pi_6\}\\
\pi_3 = 3+1 && 1 && \{\pi_0,\; \pi_4,\; \pi_5,\; \pi_6\}\\
\pi_4 = 2 && 1 && \{\pi_0,\; \pi_1,\; \pi_3,\; \pi_4,\; \pi_5,\; \pi_6\}\\
\pi_5 = 3+2 && 1 && \{\pi_0,\; \pi_4,\; \pi_5,\; \pi_6\}\\
\pi_6 = 3 && 1 && \{\pi_0,\; \pi_4,\; \pi_5,\; \pi_6\}
\end{array}
\end{equation*}
\end{claim}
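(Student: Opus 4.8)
The plan is to carry out Andrews' linked-partition-ideal analysis explicitly, reducing the statement to a finite case check at the interface between the small parts and the large parts of a partition. Throughout write $\is:=\ps_{T_{\mathrm{II}}}$, $L:=L_{\ps_{T_{\mathrm{II}}}}=\{\pi_0,\ldots,\pi_6\}$, and $\mathcal{L}(\cdot):=\mathcal{L}_{\ps_{T_{\mathrm{II}}}}(\cdot)$; recall from the two preceding claims that $\is$ has modulus $3$. The first step is the decomposition underlying \eqref{eq:important-gf-2}: for $\lambda\in\is$, separate the parts $\le 3$, which form $\tail_3(\lambda)\in L$ (it is $\le\lambda\in\is$ with largest part $\le 3$), from the parts $>3$, which form a partition $\mu\le\lambda$; then $\mu\in\is$, and having smallest part $>3$ it lies in $\is^{(3)}=\phi^3\is$, so $\mu=\phi^3\tilde\pi$ for a unique $\tilde\pi\in\is$, with $\tail_3(\tilde\pi)\in L$. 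Conversely, for $\pi\in L$ the partition $\pi\oplus\phi^3\tilde\pi$ always has $3$-tail $\pi$. Hence $l(\pi_i)=1$ for all $i$ (this modulus-$3$ decomposition realizes $\lambda$ with shift $3=1\cdot m$, and $l$ cannot be smaller), and since $\tilde\pi$ is uniquely recovered from $\lambda$ and $\pi$, the whole claim amounts to computing, for each $\pi\in L$, the set $A(\pi)$ of $\varpi\in L$ admitting some $\tilde\pi\in\is$ with $\tail_3(\tilde\pi)=\varpi$ and $\pi\oplus\phi^3\tilde\pi\in\is$, and verifying $A(\pi)=\mathcal{L}(\pi)$ as tabulated; minimality (indeed uniqueness) of the linking set then follows automatically.

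The second step is to pin down the interface conditions. As $\phi^3$ adds $3$ to each part, it preserves part-differences and changes every pairwise sum by $6\equiv 0\pmod 3$; so the Type~II conditions restricted to $\phi^3\tilde\pi$ alone hold iff $\tilde\pi\in\is$, and those restricted to $\pi$ hold since $\pi\in L\subseteq\is$. Therefore $\pi\oplus\phi^3\tilde\pi\in\is$ iff the finitely many conditions mixing a part of $\phi^3\tilde\pi$ with a part of $\pi$ hold. Writing $c\le c'$ for the two smallest parts of $\phi^3\tilde\pi$ (equivalently, the two smallest parts of $\tilde\pi$ increased by $3$, so $c,c'\ge 4$) and $a\ge a'$ for the two largest parts of $\pi$ (both $\le 3$), these are: the two new distance-$2$ inequalities $c'-a\ge 3$ and $c-a'\ge 3$ (whenever the relevant parts exist), and the new adjacency $(c,a)$ — which, since $c-a\ge 1$ always, triggers the mod-$3$ requirement only when $c=4,a=3$, and then $c+a=7\not\equiv 2\pmod 3$ \emph{fails} — so one forbids ``$c=4$ and $a=3$''. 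All of this depends on $\tilde\pi$ only through $\tail_3(\tilde\pi)$: $c=4$ iff the smallest part of $\tilde\pi$ is $1$; $c'\le 5$ iff its second smallest part is $\le 2$; and because $2+2\notin\is$ and $3+3\notin\is$, as soon as the smallest part of $\tilde\pi$ is $\ge 2$ its second smallest is $\ge 3$, i.e.\ $c'\ge 6$. This justifies that $A(\pi)$ is well defined and that ``some $\tilde\pi$'' above may be replaced by ``every such $\tilde\pi$''.

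The third step runs through $\pi=\pi_0,\ldots,\pi_6$. For $\pi_0=\emptyset$ and for $\pi_1=1,\pi_2=1+1$ the interface conditions place no restriction on $\tilde\pi$, so $\mathcal{L}(\pi_0)=\mathcal{L}(\pi_1)=\mathcal{L}(\pi_2)=L$. For $\pi_4=2$ the only active condition is $c'\ge 5$, which fails precisely when $\tilde\pi$ has two parts equal to $1$, i.e.\ $\tail_3(\tilde\pi)=\pi_2$; hence $\mathcal{L}(\pi_4)=L\setminus\{\pi_2\}=\{\pi_0,\pi_1,\pi_3,\pi_4,\pi_5,\pi_6\}$. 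For $\pi_3=3+1,\pi_5=3+2,\pi_6=3$ (all with largest part $3$), the ban on ``$c=4$'' forces the smallest part of $\tilde\pi$ to be $\ge 2$, eliminating $\pi_1,\pi_2,\pi_3$; for the survivors $\pi_0,\pi_4,\pi_5,\pi_6$ the leftover distance-$2$ demands ($c'\ge 6$ throughout, and $c\ge 5$ from $c-a'\ge 3$ when $\pi=\pi_5$) are automatic by the $2+2,3+3\notin\is$ remark, so $\mathcal{L}(\pi_3)=\mathcal{L}(\pi_5)=\mathcal{L}(\pi_6)=\{\pi_0,\pi_4,\pi_5,\pi_6\}$. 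For minimality, observe that each $\varpi$ in the asserted $\mathcal{L}(\pi)$ is genuinely needed: taking $\tilde\pi=\varpi$ (which lies in $L\subseteq\is$ and has $3$-tail $\varpi$) gives $\lambda=\pi\oplus\phi^3\varpi\in\is$ with $\tail_3(\lambda)=\pi$ whose unique continuation has $3$-tail $\varpi$, so $\varpi$ cannot be dropped.

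The only genuine work, I expect, is the bookkeeping in the third step: one must track the two smallest parts of $\tilde\pi$ against each admissible value of $\tail_3(\tilde\pi)$, be careful to use $\tilde\pi\in\is$ to exclude the blocks $2+2$ and $3+3$ (which is exactly what makes the distance-$2$ interface inequalities come for free once the single mod-$3$ interface condition is imposed), and treat the degenerate cases where $\tilde\pi$ or $\pi$ has fewer than two parts, so that some interface conditions are vacuous. Nothing deeper than the first-step decomposition is involved.
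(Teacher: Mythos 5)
Your proposal is correct. The paper offers no proof of this claim at all—it is presented as a routine observation read off from the definition of $\ps_{T_{\mathrm{II}}}$—so there is nothing to compare against except the implicit intent, and your argument (decompose $\lambda$ into $\tail_3(\lambda)$ and $\phi^3\tilde\pi$, note that $\phi^3$ preserves differences and shifts sums by a multiple of $3$, and reduce everything to the finitely many interface conditions between the two smallest parts of $\phi^3\tilde\pi$ and the two largest parts of $\pi$) is exactly the verification the authors leave to the reader; your case analysis, including the exclusion of $\pi_2$ from $\mathcal{L}(\pi_4)$ and of $\pi_1,\pi_2,\pi_3$ from $\mathcal{L}(\pi_3),\mathcal{L}(\pi_5),\mathcal{L}(\pi_6)$ via the forbidden adjacency $c=4$, $a=3$, checks out.
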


Similarly, let us denote by $H_i(x)=H_i(x,q)$ the generating function of partitions $\lambda$ in $\ps_{T_{\mathrm{II}}}$ with $\tail_m(\lambda)=\pi_i$ for $i=0,1,\ldots, 6$ where the $\pi_i$'s are as defined in Claim \ref{claim:T2-3}.

Following \eqref{eq:important-gf-2}, we have
{\footnotesize\begin{align}
H_0(x)&=H_0(xq^3)+H_1(xq^3)+H_2(xq^3)+H_3(xq^3)+H_4(xq^3)+H_5(xq^3)+H_6(xq^3),\notag\\
\label{eq:T2H0}\\
x^{-1}q^{-1}H_1(x)&=H_0(xq^3)+H_1(xq^3)+H_2(xq^3)+H_3(xq^3)+H_4(xq^3)+H_5(xq^3)+H_6(xq^3),\notag\\
\label{eq:T2H1}\\
x^{-2}q^{-2}H_2(x)&=H_0(xq^3)+H_1(xq^3)+H_2(xq^3)+H_3(xq^3)+H_4(xq^3)+H_5(xq^3)+H_6(xq^3),\notag\\
\label{eq:T2H2}\\
x^{-2}q^{-4}H_3(x)&=H_0(xq^3)+H_4(xq^3)+H_5(xq^3)+H_6(xq^3),\notag\\
\label{eq:T2H3}\\
x^{-1}q^{-2}H_4(x)&=H_0(xq^3)+H_1(xq^3)+H_3(xq^3)+H_4(xq^3)+H_5(xq^3)+H_6(xq^3),\notag\\
\label{eq:T2H4}\\
x^{-2}q^{-5}H_5(x)&=H_0(xq^3)+H_4(xq^3)+H_5(xq^3)+H_6(xq^3),\notag\\
\label{eq:T2H5}\\
x^{-1}q^{-3}H_6(x)&=H_0(xq^3)+H_4(xq^3)+H_5(xq^3)+H_6(xq^3).\notag\label{eq:T2H6}\\
\end{align}}

Let $G_{\ps_{T_{\mathrm{II},1}}}(x)=G_{\ps_{T_{\mathrm{II},1}}}(x,q)$ (resp.~$G_{\ps_{T_{\mathrm{II},2}}}(x)$) denote the generating function of partitions in $\ps_{T_{\mathrm{II}}}$ whose smallest part is at least $1$ (resp.~$2$).

Let $G_{\ps_{T_{\mathrm{II},a}}}(x)$ denote the generating function of partitions in $\ps_{T_{\mathrm{II}}}$ where $1$ appears at most once.

It follows that
\begin{align}
G_{\ps_{T_{\mathrm{II},1}}}(x)&=H_0(x)+H_1(x)+H_2(x)+H_3(x)+H_4(x)+H_5(x)+H_6(x)\nonumber\\
&=H_0(xq^{-3}),\label{eq:GT21}\\
G_{\ps_{T_{\mathrm{II},2}}}(x)&=H_0(x)+H_4(x)+H_5(x)+H_6(x)\nonumber\\
&=x^{-1}H_6(xq^{-3}),\label{eq:GT22}\\
G_{\ps_{T_{\mathrm{II},a}}}(x)&=H_0(x)+H_1(x)+H_3(x)+H_4(x)+H_5(x)+H_6(x)\nonumber\\
&=x^{-1}qH_4(xq^{-3}).\label{eq:GT23}
\end{align}

We may deduce from \eqref{eq:T2H0}, \eqref{eq:T2H1} and \eqref{eq:T2H2} that
\begin{align}
H_1(x)&=xqH_0(x),\\
H_2(x)&=x^2 q^2 H_0(x),\\
\intertext{and likewise from \eqref{eq:T2H3}, \eqref{eq:T2H5} and \eqref{eq:T2H6} that}
H_3(x)&=xqH_6(x),\\
H_5(x)&=xq^2H_6(x).
\end{align}

Hence, the system \eqref{eq:T2H0}--\eqref{eq:T2H6} can be rewritten as
\begin{framed}
{\small\begin{align}
H_0(x)&=(1+xq^4+x^2q^8)H_0(xq^3)+H_4(xq^3)+(1+xq^4+xq^5)H_6(xq^3),\label{eq:T2H0New}\\
H_4(x)&=(x q^2+x^2 q^6)H_0(xq^3)+x q^2 H_4(xq^3)+(x q^2+x^2q^6+x^2 q^7)H_6(xq^3),\label{eq:T2H4New}\\
H_6(x)&=x q^3H_0(xq^3)+x q^3H_4(xq^3)+(x q^3+x^2q^8)H_6(xq^3).\label{eq:T2H6New}
\end{align}}
\end{framed}

Using the algorithm in Section \ref{sec:q-diff}, we are able to prove the following $q$-difference equations for $G_{\ps_{T_{\mathrm{II},1}}}(x)$, $G_{\ps_{T_{\mathrm{II},2}}}(x)$ and $G_{\ps_{T_{\mathrm{II},a}}}(x)$, respectively.

\begin{theorem}
It holds that
\begin{align}
p_0(x,q)G_{\ps_{T_{\mathrm{II},1}}}(x)+p_3(x,q)G_{\ps_{T_{\mathrm{II},1}}}(xq^3)&+p_6(x,q)G_{\ps_{T_{\mathrm{II},1}}}(x q^6)\nonumber\\
&+p_9(x,q)G_{\ps_{T_{\mathrm{II},1}}}(xq^9)=0,
\end{align}
where
\begin{align*}
p_0(x,q)&=1+x(q^4+q^5),\\
p_3(x,q)&=-1-x(q + q^2 + q^3 + q^4 + q^5)-x^2(q^2 + q^4 + 2 q^5 + 2 q^6 + q^7 + q^8)\\
&\quad-x^3(q^6 + q^7 + q^9 + q^{10}),\\
p_6(x,q)&=x^3(q^{10} + q^{11})+x^4(q^{12} + q^{13} + q^{14} + q^{15} + q^{16})+x^5(q^{17} + q^{18}),\\
\intertext{and}
p_9(x,q)&=x^5 q^{26}+x^6(q^{27}+q^{28}).
\end{align*}
\end{theorem}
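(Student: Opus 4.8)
The plan is to proceed exactly as in the type~I case, simply applying the algorithm of Section~\ref{sec:q-diff} to the three-equation system \eqref{eq:T2H0New}--\eqref{eq:T2H6New} with $m=3$. First I would run Steps~(1), (2) and (3): put $u_0(x)=H_0(x)$, then introduce $u_4(x)$ as the combination of $H_4(xq^3)$ and $H_6(xq^3)$ appearing in the first row of $\tP_1$, and finally $u_6(x)$ as the multiple of $H_6(xq^3)$ surviving in the second row of $\tP_2$. By Claim~\ref{claim:q-diff}, after the third step the system is in the reduced triangular form
\begin{align*}
u_0(x)&=r_{0,0}(x)u_0(xq^3)+u_4(xq^3),\\
u_4(x)&=r_{4,0}(x)u_0(xq^3)+r_{4,4}(x)u_4(xq^3)+u_6(xq^3),\\
u_6(x)&=r_{6,0}(x)u_0(xq^3)+r_{6,4}(x)u_4(xq^3)+r_{6,6}(x)u_6(xq^3),
\end{align*}
where the $r$'s are the entries of $\tP_3=T(xq^{-3})\tP_2T(x)^{-1}$, explicit rational functions of $x,q$.

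Next I would eliminate the auxiliary functions. Since \eqref{eq:GT21} gives $G_{\ps_{T_{\mathrm{II},1}}}(x)=H_0(xq^{-3})=u_0(xq^{-3})$, the first equation expresses $u_4(x)$ in terms of $G_{\ps_{T_{\mathrm{II},1}}}(x)$ and $G_{\ps_{T_{\mathrm{II},1}}}(xq^3)$; substituting into the second equation expresses $u_6(x)$ in terms of $G_{\ps_{T_{\mathrm{II},1}}}$ evaluated at $x,xq^3,xq^6$ (shifting indices as in \eqref{eq:u6-eli}); and plugging all three into the third equation and clearing denominators yields a relation among $G_{\ps_{T_{\mathrm{II},1}}}(x)$, $G_{\ps_{T_{\mathrm{II},1}}}(xq^3)$, $G_{\ps_{T_{\mathrm{II},1}}}(xq^6)$ and $G_{\ps_{T_{\mathrm{II},1}}}(xq^9)$. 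After multiplying by the least common denominator and collecting powers of $x$, this is exactly the asserted fourth-order $q$-difference equation; I would then read off $p_0,p_3,p_6,p_9$ and check they match the stated polynomials.

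The main obstacle is purely computational bookkeeping rather than conceptual: the entries of $\tP_2$ and $\tP_3$ carry denominators like $1+xq^4$ and the cubic factor coming from row~2, so the elimination produces a large rational expression that must be cleared and simplified to the compact polynomial form displayed. Getting the shifts $x\mapsto xq^{-3}$ right at each substitution (mirroring \eqref{eq:u0-eli}--\eqref{eq:u6-eli}) and verifying that all spurious denominators cancel is where care is needed; this is precisely what the \textit{Mathematica} implementation mentioned in the type~I remark handles. Once the recurrence for $G_{\ps_{T_{\mathrm{II},1}}}(x)$ is established, the analogous recurrences for $G_{\ps_{T_{\mathrm{II},2}}}(x)$ and $G_{\ps_{T_{\mathrm{II},a}}}(x)$ follow the same route, using \eqref{eq:GT22} and \eqref{eq:GT23} to identify those generating functions with $x^{-1}H_6(xq^{-3})$ and $x^{-1}qH_4(xq^{-3})$ respectively, so one eliminates $u_6$ or $u_4$ as the distinguished variable instead of $u_0$.
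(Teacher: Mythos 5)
Your proposal follows exactly the paper's route: the paper proves this theorem by applying the Section~\ref{sec:q-diff} algorithm to the reduced system \eqref{eq:T2H0New}--\eqref{eq:T2H6New} in $H_0,H_4,H_6$ and then eliminating the auxiliary functions via $G_{\ps_{T_{\mathrm{II},1}}}(x)=H_0(xq^{-3})$, just as was carried out in detail for the type~I case. Your description of the substitutions $u_0,u_4,u_6$, the shifts $x\mapsto xq^{-3}$, and the final clearing of denominators matches the intended (computer-assisted) computation, so the proposal is correct and essentially identical in approach.
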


\begin{theorem}
It holds that
\begin{align}
p_0(x,q)G_{\ps_{T_{\mathrm{II},2}}}(x)+p_3(x,q)G_{\ps_{T_{\mathrm{II},2}}}(xq^3)&+p_6(x,q)G_{\ps_{T_{\mathrm{II},2}}}(x q^6)\nonumber\\
&+p_9(x,q)G_{\ps_{T_{\mathrm{II},2}}}(xq^9)=0,
\end{align}
where
\begin{align*}
p_0(x,q)&=1+x(q^5+q^7),\\
p_3(x,q)&=-1-x(q^2 + q^3 + q^4 + q^5 + q^7)\\
&\quad-x^2(q^5 + q^6 + q^7 + 2 q^8 + q^9 + q^{10} + q^{11}) - x^3(q^{10} + q^{12} + q^{13} + q^{15}),\\
p_6(x,q)&=x^3(q^{14} + q^{16})+x^4(q^{18} + q^{19} + q^{20} + q^{21} + q^{22})+x^5(q^{24} + q^{26}),\\
\intertext{and}
p_9(x,q)&=x^5 q^{32}+x^6(q^{34}+q^{36}).
\end{align*}
\end{theorem}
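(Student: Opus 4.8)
The plan is to reproduce verbatim the derivation already carried out for $G_{\ps_{T_{\mathrm{II},1}}}(x)$, but tracking $H_6$ in place of $H_0$. By \eqref{eq:GT22} we have $G_{\ps_{T_{\mathrm{II},2}}}(x)=x^{-1}H_6(xq^{-3})$, equivalently $H_6(x)=xq^{3}\,G_{\ps_{T_{\mathrm{II},2}}}(xq^{3})$, so it suffices to extract from the reduced system \eqref{eq:T2H0New}--\eqref{eq:T2H6New} a single $q$-difference equation satisfied by $H_6(x)$ and then translate it. First I would reorder the three-dimensional system \eqref{eq:T2H0New}--\eqref{eq:T2H6New} so that $H_6$ occupies the first coordinate, say in the order $(H_6,H_0,H_4)^{\mathsf T}$, and write it in the matrix form \eqref{eq:step-0} with $m=3$ and $k=3$.

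Next I would run the algorithm of Section \ref{sec:q-diff} on this system. Step (1) sets $u_6(x)=H_6(x)$; Step (2) introduces a new unknown $u_0(x)$ equal to the linear combination of $H_0(xq^3),H_4(xq^3)$ prescribed by the first row of the current matrix, and conjugates by the corresponding $T(x)$; Step (3) does the same with a new unknown $u_4(x)$. By Claim \ref{claim:q-diff} the terminal matrix $\tP_3$ is lower-Hessenberg with $1$'s on the superdiagonal, so the program ends with a system of the shape
\begin{align*}
u_6(x)&=r_{6,6}(x)u_6(xq^3)+u_0(xq^3),\\
u_0(x)&=r_{0,6}(x)u_6(xq^3)+r_{0,0}(x)u_0(xq^3)+u_4(xq^3),\\
u_4(x)&=r_{4,6}(x)u_6(xq^3)+r_{4,0}(x)u_0(xq^3)+r_{4,4}(x)u_4(xq^3),
\end{align*}
with the $r$'s rational in $x$ and $q$. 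Using the first equation to express $u_0$ through $u_6$, substituting into the second to express $u_4$ through $u_6$, and plugging both into the third, I obtain a scalar four-term $q$-difference equation relating $H_6(x),H_6(xq^3),H_6(xq^6),H_6(xq^9)$. Replacing $H_6(xq^{3j})$ by $xq^{3j+3}G_{\ps_{T_{\mathrm{II},2}}}(xq^{3(j+1)})$, then performing the shift $x\mapsto xq^{-3}$ and dividing by the appropriate monomial in $x$, converts this into the claimed relation among $G_{\ps_{T_{\mathrm{II},2}}}(x),G_{\ps_{T_{\mathrm{II},2}}}(xq^3),G_{\ps_{T_{\mathrm{II},2}}}(xq^6),G_{\ps_{T_{\mathrm{II},2}}}(xq^9)$.

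The main obstacle is the final simplification. The intermediate matrices $\tP_2,\tP_3$ carry denominators built from factors such as $(1+xq^{c})$ coming from the pivots $\tp_{s-1,s}(x)$, and the elimination above a priori produces coefficients that are rational rather than polynomial; the content of the theorem is that all these spurious denominators cancel and the surviving numerators coalesce exactly into the polynomials $p_0,p_3,p_6,p_9$ displayed in the statement. Carrying this cancellation out by hand is unpleasant, and this is precisely where the \textit{Mathematica} routine mentioned in the preceding remark is used. As an independent verification I would expand $G_{\ps_{T_{\mathrm{II},2}}}(x)$ as a power series in $q$ with polynomial coefficients in $x$ (directly from the definition of the partition set of type II, restricted to smallest part at least $2$) to a high order and check that the four-term recurrence holds termwise.
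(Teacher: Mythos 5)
Your proposal is correct and follows essentially the same route as the paper: the authors obtain this theorem by applying the Section \ref{sec:q-diff} algorithm to the reduced system \eqref{eq:T2H0New}--\eqref{eq:T2H6New} with $H_6$ taken as the leading unknown, eliminating down to a four-term scalar equation for $H_6(x)$, and translating via $G_{\ps_{T_{\mathrm{II},2}}}(x)=x^{-1}H_6(xq^{-3})$, with the polynomial cancellations carried out by their \textit{Mathematica} implementation. Your concluding series-expansion check is only a sanity test, but the algorithmic derivation itself already constitutes the proof, exactly as in the paper.
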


\begin{theorem}
It holds that
\begin{align}
p_0(x,q)G_{\ps_{T_{\mathrm{II},a}}}(x)+p_3(x,q)G_{\ps_{T_{\mathrm{II},a}}}(xq^3)&+p_6(x,q)G_{\ps_{T_{\mathrm{II},a}}}(x q^6)\nonumber\\
&+p_9(x,q)G_{\ps_{T_{\mathrm{II},a}}}(xq^9)=0,
\end{align}
where
\begin{align*}
p_0(x,q)&=1+x(q^4+q^8),\\
p_3(x,q)&=-1-x(q + q^2 + q^3 + q^4 + q^8)\\
&\quad-x^2(q^4 + 2 q^5 + q^6 + q^8 + q^9 + q^{10} + q^{11})-x^3(q^9 + q^{12} + q^{13} + q^{16}),\\
p_6(x,q)&=x^3(-q^{12} + q^{13} + q^{14} + q^{15})\\
&\quad+x^4(-q^{13} + q^{15} + q^{16} + q^{19} + q^{20} + q^{21} + q^{22})+x^5(q^{23} + q^{27}),\\
\intertext{and}
p_9(x,q)&=x^5 q^{29}+x^6(q^{30}+q^{34}).
\end{align*}
\end{theorem}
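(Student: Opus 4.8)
The plan is to follow exactly the same route that produced the $q$-difference equation for $G_{\ps_{T_{\mathrm{I},1}}}(x)$ in Theorem \ref{th:q-diff-T11}, now applied to the boxed system \eqref{eq:T2H0New}--\eqref{eq:T2H6New} with $G_{\ps_{T_{\mathrm{II},a}}}(x)$ playing the distinguished role. The key point is that, by \eqref{eq:GT23}, we have $G_{\ps_{T_{\mathrm{II},a}}}(x) = x^{-1}q\,H_4(xq^{-3})$, so a $q$-difference equation for $H_4(x)$ translates directly into one for $G_{\ps_{T_{\mathrm{II},a}}}(x)$ after the substitution $x \mapsto xq^{-3}$ and clearing the resulting power of $x$. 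Hence it suffices to extract from the $3\times 3$ system in $(H_0, H_4, H_6)$ a single $q$-difference equation for $H_4$.

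First I would reorder the three unknowns so that $H_4$ is listed first, i.e. apply the algorithm of Section \ref{sec:q-diff} with the vector $(H_4, H_0, H_6)^{\mathsf T}$ (this is the ``swap $F_s$ and $F_t$'' provision in Step ($\mathbf{s}$)), and then run the three steps of the elimination procedure. Step (1) sets $u_4(x) = H_4(x)$; since the top row of the reordered coefficient matrix has nonzero off-diagonal entries, we pass to Step (2), introducing $u$-variable number two as the appropriate linear combination of $H_0(xq^3)$ and $H_6(xq^3)$ dictated by that row; Step (3) does the same one more level down. By Claim \ref{claim:q-diff} the resulting matrix $\tP_3$ is in the near-lower-triangular normal form with $1$'s on the superdiagonal of its first two rows, so the final system has the shape displayed in the ``Final setup'' paragraph. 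From that three-term system one eliminates the two auxiliary functions by back-substitution exactly as in \eqref{eq:u0-eli}--\eqref{eq:u6-eli}, obtaining a four-term $q$-difference equation in $H_4$ at arguments $x, xq^3, xq^6, xq^9$; substituting $x\mapsto xq^{-3}$ and multiplying through by the appropriate monomial in $x$ and $q$ converts it into the claimed relation for $G_{\ps_{T_{\mathrm{II},a}}}(x)$, and matching coefficients pins down $p_0, p_3, p_6, p_9$ as stated. Throughout, the derivations of the auxiliary identities $H_2 = x^2q^2H_0$, $H_5 = xq^2H_6$, etc.\ (used to pass from \eqref{eq:T2H0}--\eqref{eq:T2H6} to the boxed system) are the elementary consequences of comparing the right-hand sides of pairs of equations that share the same linking set, just as in the type I case.

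The main obstacle is purely computational: the intermediate matrices $\tP_2$ and $\tP_3$ have entries that are rational functions in $x$ and $q$ with denominators like $1+xq^4$ and $1+xq$ (mirroring what one sees in the type I computation), and the back-substitution produces an expression that must be cleared of all denominators and then collapsed — via nontrivial polynomial cancellation — into the comparatively compact four polynomials $p_0, p_3, p_6, p_9$. I would carry this out with the \textit{Mathematica} implementation already alluded to in the Remark after Theorem \ref{th:q-diff-T11}; the correctness of the final recurrence can then be cross-checked independently by verifying that the series expansion of $G_{\ps_{T_{\mathrm{II},a}}}(x,q)$ in $x$ (whose coefficients are $q$-series counting the relevant partitions) satisfies \eqref{eq:q-diff-T11}-type relation to high order. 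No genuinely new idea is needed beyond what Section \ref{sec:q-diff} and the type I subsection already supply; the content of the theorem is the explicit outcome of that algorithm.
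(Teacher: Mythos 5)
Your proposal is correct and follows exactly the route the paper takes: the paper derives this theorem by applying the Section \ref{sec:q-diff} elimination algorithm to the boxed system \eqref{eq:T2H0New}--\eqref{eq:T2H6New} with $H_4$ as the distinguished function (just as it does explicitly for $H_0$ in the type I case), and then converts the resulting recurrence for $H_4$ into one for $G_{\ps_{T_{\mathrm{II},a}}}$ via \eqref{eq:GT23}. The only caveat is bookkeeping: since $H_4(x)=xq^2\,G_{\ps_{T_{\mathrm{II},a}}}(xq^3)$, each of the four terms acquires its own monomial factor rather than a single common one, but this is absorbed into the stated $p_{3i}$ and does not affect the argument.
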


\subsection{Partition set of type III}

Recall that the partition set of type III is the set of partitions with difference at least $3$ at distance $3$ such that if parts at distance $2$ differ by at most $1$, then the sum of the two parts and their intermediate part is congruent to $1$ modulo $3$. In other words, if $\lambda=\lambda_1+\lambda_2+\cdots+\lambda_\ell$ is in this partition set, then
\begin{enumerate}[label=(\roman*)]
\item $\lambda_i-\lambda_{i+3}\ge 3$;
\item $\lambda_i-\lambda_{i+2}\le 1$ implies $\lambda_i+\lambda_{i+1}+\lambda_{i+2}\equiv 1 \pmod{3}$.
\end{enumerate}

Let $\ps_{T_{\mathrm{III}}}$ denote the partition set of type III.

\begin{claim}
$\ps_{T_{\mathrm{III}}}$ is a partition ideal of modulus $3$.
\end{claim}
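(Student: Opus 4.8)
The plan is to imitate the proof already given for $\ps_{T_{\mathrm{I}}}$; there are exactly two things to verify, namely that $\ps_{T_{\mathrm{III}}}$ is a partition ideal and that $3$ is a modulus for it. For the first point, since any partition $\pi\le\lambda$ is reached from $\lambda$ by repeatedly deleting single parts — each intermediate stage being a partition whose part-multiset is contained in that of $\lambda$ — it suffices to prove that deleting one part from an arbitrary $\lambda=\lambda_1+\cdots+\lambda_\ell\in\ps_{T_{\mathrm{III}}}$ yields a partition still in $\ps_{T_{\mathrm{III}}}$. Deleting $\lambda_1$ or $\lambda_\ell$ is harmless, so I would fix $1<k<\ell$, set $\tilde\lambda=\lambda_1+\cdots+\lambda_{k-1}+\lambda_{k+1}+\cdots+\lambda_\ell$, and check the two defining conditions for $\tilde\lambda$.

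For condition (i) — distance $3$, difference at least $3$ — every distance-$3$ pair of $\tilde\lambda$ is either a distance-$3$ pair of $\lambda$, when it does not straddle the deleted slot, in which case the bound is inherited, or else it has the form $(\lambda_a,\lambda_{a+4})$ with $a\in\{k-3,k-2,k-1\}$, and then $\lambda_a-\lambda_{a+4}\ge\lambda_a-\lambda_{a+3}\ge 3$ by condition (i) for $\lambda$ together with weak decrease. For condition (ii) — distance $2$ — a triple of consecutive parts of $\tilde\lambda$ is either a triple $(\lambda_a,\lambda_{a+1},\lambda_{a+2})$ of consecutive parts of $\lambda$, in which case condition (ii) for $\lambda$ gives precisely what is required, or it is $(\lambda_a,\lambda_{a+2},\lambda_{a+3})$ (when $k=a+1$) or $(\lambda_a,\lambda_{a+1},\lambda_{a+3})$ (when $k=a+2$); in these last two cases the outer parts differ by $\lambda_a-\lambda_{a+3}\ge 3>1$, so the hypothesis of (ii) is vacuously false and nothing needs to be verified. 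Hence $\tilde\lambda\in\ps_{T_{\mathrm{III}}}$, and $\ps_{T_{\mathrm{III}}}$ is a partition ideal.

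For the modulus, I would observe that both defining conditions are invariant under adding $3$ to every part: condition (i) only involves the differences $\lambda_i-\lambda_{i+3}$, while in condition (ii) the quantity $\lambda_i-\lambda_{i+2}$ is unchanged and the sum $\lambda_i+\lambda_{i+1}+\lambda_{i+2}$ increases by $9\equiv 0\pmod 3$. It follows that $\phi^3\ps_{T_{\mathrm{III}}}=\ps_{T_{\mathrm{III}}}^{(3)}$, so $3$ is a modulus.

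I do not expect a genuine obstacle here; the argument is short and runs parallel to the $\ps_{T_{\mathrm{I}}}$ case. The one place that calls for care is the enumeration in the partition-ideal step of exactly which distance-$2$ and distance-$3$ configurations of $\tilde\lambda$ are actually new after the deletion, and the observation that every new distance-$2$ triple has its outer gap already at least $3$ (so condition (ii) is vacuous for it); once that bookkeeping is set out correctly, the rest is immediate.
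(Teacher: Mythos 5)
Your argument is correct and is exactly the intended one: the paper proves only the analogous claim for $\ps_{T_{\mathrm{I}}}$ (delete a single part, check that the inherited or newly created distance configurations still satisfy the two conditions, and note that shifting all parts by $3$ preserves both conditions) and leaves the type III case to the same reasoning, which you have carried out faithfully, including the key observation that every new distance-$2$ triple created by a deletion has outer gap $\lambda_a-\lambda_{a+3}\ge 3$, so condition (ii) holds vacuously for it. No gaps.
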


\begin{claim}
The $L_{\ps_{T_{\mathrm{III}}}}$ corresponding to modulus $3$ equals
\begin{align*}
\{&\emptyset,\;1,\;1+1,\;2,\;2+1,\;2+1+1,\;2+2,\;3,\;3+1,\\
&3+1+1,\;3+2,\;3+2+1,\;3+2+2,\;3+3,\;3+3+1\}.
\end{align*}
\end{claim}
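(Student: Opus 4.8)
The plan is to use directly the definition of $L_{\ps_{T_{\mathrm{III}}}}$: with respect to the modulus $m=3$ (established in the previous claim), $L_{\ps_{T_{\mathrm{III}}}}$ is the set of partitions in $\ps_{T_{\mathrm{III}}}$ whose largest part is at most $3$, together with $\emptyset$. Since the parts of such a partition all lie in $\{1,2,3\}$, this is a finite set and the claim is a finite verification.

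First I would bound the number of parts. Suppose $\lambda=\lambda_1+\lambda_2+\cdots+\lambda_\ell\in L_{\ps_{T_{\mathrm{III}}}}$ has $\ell\ge 4$. Then condition (i) with $i=1$ gives $\lambda_1-\lambda_4\ge 3$, so $\lambda_4\le\lambda_1-3\le 0$, which is impossible. Hence every element of $L_{\ps_{T_{\mathrm{III}}}}$ has at most three parts. For $\ell\le 1$ and for $\ell=2$ both defining conditions are vacuous (there is no index $i$ with $i+2\le\ell$, let alone $i+3\le\ell$), so $\emptyset$; the singletons $1,2,3$; and the six two-part partitions $1+1$, $2+1$, $2+2$, $3+1$, $3+2$, $3+3$ all belong to $L_{\ps_{T_{\mathrm{III}}}}$.

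It remains to treat $\ell=3$. Here condition (i) is again automatic (there is no $\lambda_4$), and condition (ii) need only be checked at $i=1$. Running through the ten weakly decreasing triples with entries in $\{1,2,3\}$: when $\lambda_1-\lambda_3\ge 2$, i.e.\ for $3+1+1$, $3+2+1$, $3+3+1$, the hypothesis of (ii) fails and the triple is admissible; when $\lambda_1-\lambda_3\le 1$, one keeps exactly those with $\lambda_1+\lambda_2+\lambda_3\equiv 1\pmod{3}$, namely $2+1+1$ (sum $4$) and $3+2+2$ (sum $7$), and discards $1+1+1$, $2+2+1$, $2+2+2$, $3+3+2$, $3+3+3$. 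Together with the partitions found above this yields the $1+3+6+5=15$ partitions in the displayed set. I would record the ten length-$3$ cases in a short table (listing $\lambda_1-\lambda_3$, the residue of $\lambda_1+\lambda_2+\lambda_3$ modulo $3$, and the verdict) to make the check fully transparent. There is no genuine obstacle: the only point needing a moment's thought is that condition (i), with all parts at most $3$, already forces $\ell\le 3$, after which the argument is a bounded exhaustive search.
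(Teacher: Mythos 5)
Your verification is correct and is exactly the direct check from the definition that the paper leaves implicit (the paper states this claim without proof, as a "straightforward" observation). The key reduction — condition (i) forces $\ell\le 3$ once all parts are at most $3$, after which conditions (i) and (ii) only need checking at $i=1$ for the ten length-$3$ triples — is handled correctly, and the resulting $1+3+6+5=15$ partitions match the displayed set.
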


\begin{claim}\label{claim:T3-3}
The span and linking set of partitions in $L_{\ps_{T_{\mathrm{III}}}}$ are given as follows.
{\footnotesize\begin{equation*}
\begin{array}{lcp{0.05cm}l}
& \text{span} && \quad\quad\quad\quad\quad\quad\quad\quad\quad\quad\quad\quad\text{linking set}\\
\pi_0 = \emptyset & 1 && \{\pi_0,\; \pi_1,\; \pi_2,\; \pi_3,\; \pi_4,\; \pi_5,\; \pi_6,\; \pi_7,\; \pi_8,\; \pi_9,\; \pi_{10},\; \pi_{11},\; \pi_{12},\; \pi_{13},\; \pi_{14}\}\\
\pi_1 = 1 & 1 && \{\pi_0,\; \pi_1,\; \pi_2,\; \pi_3,\; \pi_4,\; \pi_5,\; \pi_6,\; \pi_7,\; \pi_8,\; \pi_9,\; \pi_{10},\; \pi_{11},\; \pi_{12},\; \pi_{13},\; \pi_{14}\}\\
\pi_2 = 1+1 & 1 && \{\pi_0,\; \pi_1,\; \pi_2,\; \pi_3,\; \pi_4,\; \pi_5,\; \pi_6,\; \pi_7,\; \pi_8,\; \pi_9,\; \pi_{10},\; \pi_{11},\; \pi_{12},\; \pi_{13},\; \pi_{14}\}\\
\pi_3 = 2 & 1 && \{\pi_0,\; \pi_1,\; \pi_2,\; \pi_3,\; \pi_4,\; \pi_5,\; \pi_6,\; \pi_7,\; \pi_8,\; \pi_9,\; \pi_{10},\; \pi_{11},\; \pi_{12},\; \pi_{13},\; \pi_{14}\}\\
\pi_4 = 2+1 & 1 && \{\pi_0,\; \pi_1,\; \pi_2,\; \pi_3,\; \pi_4,\; \pi_5,\; \pi_6,\; \pi_7,\; \pi_8,\; \pi_9,\; \pi_{10},\; \pi_{11},\; \pi_{12},\; \pi_{13},\; \pi_{14}\}\\
\pi_5 = 2+1+1 & 1 && \{\pi_0,\; \pi_1,\; \pi_2,\; \pi_3,\; \pi_4,\; \pi_5,\; \pi_6,\; \pi_7,\; \pi_8,\; \pi_9,\; \pi_{10},\; \pi_{11},\; \pi_{12},\; \pi_{13},\; \pi_{14}\}\\
\pi_6 = 2+2 & 1 && \{\pi_0,\; \pi_1,\; \pi_3,\; \pi_4,\; \pi_6,\; \pi_7,\; \pi_8,\; \pi_{10},\; \pi_{11},\; \pi_{12},\; \pi_{13},\; \pi_{14}\}\\
\pi_7 = 3 & 1 && \{\pi_0,\; \pi_1,\; \pi_3,\; \pi_4,\; \pi_6,\; \pi_7,\; \pi_8,\; \pi_{10},\; \pi_{11},\; \pi_{12},\; \pi_{13},\; \pi_{14}\}\\
\pi_8 = 3+1 & 1 && \{\pi_0,\; \pi_1,\; \pi_3,\; \pi_4,\; \pi_6,\; \pi_7,\; \pi_8,\; \pi_{10},\; \pi_{11},\; \pi_{12},\; \pi_{13},\; \pi_{14}\}\\
\pi_9 = 3+1+1 & 1 && \{\pi_0,\; \pi_1,\; \pi_3,\; \pi_4,\; \pi_6,\; \pi_7,\; \pi_8,\; \pi_{10},\; \pi_{11},\; \pi_{12},\; \pi_{13},\; \pi_{14}\}\\
\pi_{10} = 3+2 & 1 && \{\pi_0,\; \pi_1,\; \pi_3,\; \pi_4,\; \pi_6,\; \pi_7,\; \pi_8,\; \pi_{10},\; \pi_{11},\; \pi_{12},\; \pi_{13},\; \pi_{14}\}\\
\pi_{11} = 3+2+1 & 1 && \{\pi_0,\; \pi_1,\; \pi_3,\; \pi_4,\; \pi_6,\; \pi_7,\; \pi_8,\; \pi_{10},\; \pi_{11},\; \pi_{12},\; \pi_{13},\; \pi_{14}\}\\
\pi_{12} = 3+2+2 & 1 && \{\pi_0,\; \pi_3,\; \pi_6,\; \pi_7,\; \pi_{10},\; \pi_{12},\; \pi_{13}\}\\
\pi_{13} = 3+3 & 1 && \{\pi_0,\; \pi_1,\; \pi_3,\; \pi_7,\; \pi_8,\; \pi_{10},\; \pi_{13},\; \pi_{14}\}\\
\pi_{14} = 3+3+1 & 1 && \{\pi_0,\; \pi_1,\; \pi_3,\; \pi_7,\; \pi_8,\; \pi_{10},\; \pi_{13},\; \pi_{14}\}\\
\end{array}
\end{equation*}}
\end{claim}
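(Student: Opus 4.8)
The plan is to verify Definition~\ref{def:linked}(iii) for $\ps_{T_{\mathrm{III}}}$ directly, one tail at a time, reducing everything to a finite check at the ``interface'' between a tail and its continuation. The first ingredient is that $\phi^{3}$ maps $\ps_{T_{\mathrm{III}}}$ onto the set of its members whose smallest part exceeds $3$: adding $3$ to every part leaves all part-differences unchanged, so condition (i) and the hypothesis of condition (ii) survive, and it changes each sum of three consecutive parts by $9\equiv 0\pmod 3$, so the conclusion of (ii) survives too; the argument is plainly reversible, which re-proves that $3$ is a modulus. Consequently, if $\lambda\in\ps_{T_{\mathrm{III}}}$ has $\tail_{3}(\lambda)=\pi$ and we write it, as in the decomposition lemma recalled in Section~\ref{sect:linked-partition-ideal}, as $\lambda=\pi\oplus\phi^{3}\tilde\pi$ with $\tilde\pi$ the parts of $\lambda$ exceeding $3$ lowered by $3$, then both $\tilde\pi$ and $\phi^{3}\tilde\pi$ lie in $\ps_{T_{\mathrm{III}}}$, so each of the two blocks making up $\lambda$ is internally admissible and the only conditions that can fail in $\lambda$ are those on triples and quadruples straddling the two blocks.

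The second step is to show that for $\tilde\pi\in\ps_{T_{\mathrm{III}}}$ the membership $\pi\oplus\phi^{3}\tilde\pi\in\ps_{T_{\mathrm{III}}}$ depends only on the pair $(\pi,\pi')$, where $\pi':=\tail_{3}(\tilde\pi)\in L_{\ps_{T_{\mathrm{III}}}}$. Indeed, the parts of $\phi^{3}\tilde\pi$ that are at most $6$ are exactly the parts of $\pi'$ raised by $3$, so they lie in $\{4,5,6\}$, whereas any part of $\phi^{3}\tilde\pi$ that is $\ge 7$ differs by at least $4$ from every part of $\pi$ (all of which are $\le 3$), so it can neither trigger a distance-$2$ ``difference $\le 1$'' clause nor cause a distance-$3$ ``difference $\ge 3$'' requirement to fail against $\pi$; thus the straddling conditions are decided entirely by the largest parts of $\pi$ and the parts of $\pi'$ shifted up by $3$. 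This makes it legitimate to set
\[
\mathcal{L}_{\ps_{T_{\mathrm{III}}}}(\pi):=\bigl\{\pi'\in L_{\ps_{T_{\mathrm{III}}}}:\ \pi\oplus\phi^{3}\pi'\in\ps_{T_{\mathrm{III}}}\bigr\},
\]
and, combined with the first step, it delivers the ``if and only if'' of Definition~\ref{def:linked}(iii) at once, with minimality clear because $\tilde\pi=\pi'$ already realises each listed continuation. Moreover $\tilde\pi=3$ works for every $\pi$ (then $\pi\oplus\phi^{3}(3)=\pi\oplus(6)$ creates no new short-distance interaction), so a valid continuation may carry a part at level $6\le 2\cdot 3$; this forces the span $l(\pi)$ to be $1$ for all $\pi$, and it also explains why $\pi_{7}=3$ belongs to every linking set.

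It then remains to evaluate $\mathcal{L}_{\ps_{T_{\mathrm{III}}}}(\pi)$ for the fifteen values of $\pi$. Writing $\pi=a_{1}+\cdots+a_{j}$ and $\pi'=b_{1}+\cdots+b_{k}$ in weakly decreasing order (here $j,k\le 3$, since condition (i) already forbids four parts not exceeding $3$), for $\lambda=\pi\oplus\phi^{3}\pi'=(b_{1}+3,\ldots,b_{k}+3,a_{1},\ldots,a_{j})$ the distance-$3$ requirement unwinds to the inequalities $b_{k-3+l}\ge a_{l}$ over the indices $l$ for which both sides are defined, while the distance-$2$ requirement is vacuous except on triples of the shape $(4,c,3)$ with $c\in\{3,4\}$, where $4+c+3\equiv 1\pmod 3$ holds iff $c=3$; hence the congruence fails precisely when $\pi'$ ends in $1+1$ and $\pi$ begins with $3$. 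Running these two tests through $\pi_{0},\ldots,\pi_{14}$ reproduces the tabulated spans and linking sets, and the forward direction of Definition~\ref{def:linked}(iii) is automatic since the straddling conditions are necessary for $\lambda\in\ps_{T_{\mathrm{III}}}$.

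I expect the only real obstacle to be organisational rather than conceptual: for each $(\pi,\pi')$ one must enumerate exactly which straddling triples and quadruples actually occur, taking care with the edge cases in which $\pi$ or $\pi'$ has fewer than three parts (so that a part $\ge 7$, or the absence of a part, sits in a boundary slot), and then confirm the case split is exhaustive. A short \textit{Mathematica} computation, of the kind alluded to elsewhere in this section, independently corroborates the entries of the table.
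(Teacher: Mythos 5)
Your proposal is correct: the paper offers no written proof of this claim (it is presented as a routine observation from the definition), and your argument --- reducing membership of $\pi\oplus\phi^{3}\tilde\pi$ to the two interface tests $b_{k+l-3}\ge a_{l}$ and the exclusion of the straddling triple $(4,4,3)$, and then running the fifteen tails through them --- is exactly the direct finite verification the paper takes for granted. Spot-checking several rows (e.g.\ $\pi_{12}$, $\pi_{13}$, $\pi_{14}$) against your two tests reproduces the tabulated spans and linking sets, so the approach is sound and essentially the intended one.
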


Let us denote by $H_i(x)=H_i(x,q)$ the generating function of partitions $\lambda$ in $\ps_{T_{\mathrm{III}}}$ with $\tail_m(\lambda)=\pi_i$ for $i=0,1,\ldots, 14$ where the $\pi_i$'s are as defined in Claim \ref{claim:T3-3}.

Following \eqref{eq:important-gf-2}, we have
{\footnotesize\begin{align}
H_0(x)&=H_0(xq^3)+H_1(xq^3)+H_2(xq^3)+H_3(xq^3)+H_4(xq^3)+H_5(xq^3)\notag\\
&\quad+H_6(xq^3)+H_7(xq^3)+H_8(xq^3)+H_9(xq^3)+H_{10}(xq^3)\notag\\
&\quad+H_{11}(xq^3)+H_{12}(xq^3)+H_{13}(xq^3)+H_{14}(xq^3),\\
x^{-1}q^{-1}H_1(x)&=H_0(xq^3)+H_1(xq^3)+H_2(xq^3)+H_3(xq^3)+H_4(xq^3)+H_5(xq^3)\notag\\
&\quad+H_6(xq^3)+H_7(xq^3)+H_8(xq^3)+H_9(xq^3)+H_{10}(xq^3)\notag\\
&\quad+H_{11}(xq^3)+H_{12}(xq^3)+H_{13}(xq^3)+H_{14}(xq^3),\\
x^{-2}q^{-2}H_2(x)&=H_0(xq^3)+H_1(xq^3)+H_2(xq^3)+H_3(xq^3)+H_4(xq^3)+H_5(xq^3)\notag\\
&\quad+H_6(xq^3)+H_7(xq^3)+H_8(xq^3)+H_9(xq^3)+H_{10}(xq^3)\notag\\
&\quad+H_{11}(xq^3)+H_{12}(xq^3)+H_{13}(xq^3)+H_{14}(xq^3),\\
x^{-1}q^{-2}H_3(x)&=H_0(xq^3)+H_1(xq^3)+H_2(xq^3)+H_3(xq^3)+H_4(xq^3)+H_5(xq^3)\notag\\
&\quad+H_6(xq^3)+H_7(xq^3)+H_8(xq^3)+H_9(xq^3)+H_{10}(xq^3)\notag\\
&\quad+H_{11}(xq^3)+H_{12}(xq^3)+H_{13}(xq^3)+H_{14}(xq^3),\\
x^{-2}q^{-3}H_4(x)&=H_0(xq^3)+H_1(xq^3)+H_2(xq^3)+H_3(xq^3)+H_4(xq^3)+H_5(xq^3)\notag\\
&\quad+H_6(xq^3)+H_7(xq^3)+H_8(xq^3)+H_9(xq^3)+H_{10}(xq^3)\notag\\
&\quad+H_{11}(xq^3)+H_{12}(xq^3)+H_{13}(xq^3)+H_{14}(xq^3),\\
x^{-3}q^{-4}H_5(x)&=H_0(xq^3)+H_1(xq^3)+H_2(xq^3)+H_3(xq^3)+H_4(xq^3)+H_5(xq^3)\notag\\
&\quad+H_6(xq^3)+H_7(xq^3)+H_8(xq^3)+H_9(xq^3)+H_{10}(xq^3)\notag\\
&\quad+H_{11}(xq^3)+H_{12}(xq^3)+H_{13}(xq^3)+H_{14}(xq^3),\\
x^{-2}q^{-4}H_6(x)&=H_0(xq^3)+H_1(xq^3)+H_3(xq^3)+H_4(xq^3)+H_6(xq^3)+H_7(xq^3)\notag\\
&\quad+H_8(xq^3)+H_{10}(xq^3)+H_{11}(xq^3)+H_{12}(xq^3)+H_{13}(xq^3)\notag\\
&\quad+H_{14}(xq^3),\\
x^{-1}q^{-3}H_7(x)&=H_0(xq^3)+H_1(xq^3)+H_3(xq^3)+H_4(xq^3)+H_6(xq^3)+H_7(xq^3)\notag\\
&\quad+H_8(xq^3)+H_{10}(xq^3)+H_{11}(xq^3)+H_{12}(xq^3)+H_{13}(xq^3)\notag\\
&\quad+H_{14}(xq^3),\\
x^{-2}q^{-4}H_8(x)&=H_0(xq^3)+H_1(xq^3)+H_3(xq^3)+H_4(xq^3)+H_6(xq^3)+H_7(xq^3)\notag\\
&\quad+H_8(xq^3)+H_{10}(xq^3)+H_{11}(xq^3)+H_{12}(xq^3)+H_{13}(xq^3)\notag\\
&\quad+H_{14}(xq^3),\\
x^{-3}q^{-5}H_9(x)&=H_0(xq^3)+H_1(xq^3)+H_3(xq^3)+H_4(xq^3)+H_6(xq^3)+H_7(xq^3)\notag\\
&\quad+H_8(xq^3)+H_{10}(xq^3)+H_{11}(xq^3)+H_{12}(xq^3)+H_{13}(xq^3)\notag\\
&\quad+H_{14}(xq^3),\\
x^{-2}q^{-5}H_{10}(x)&=H_0(xq^3)+H_1(xq^3)+H_3(xq^3)+H_4(xq^3)+H_6(xq^3)+H_7(xq^3)\notag\\
&\quad+H_8(xq^3)+H_{10}(xq^3)+H_{11}(xq^3)+H_{12}(xq^3)+H_{13}(xq^3)\notag\\
&\quad+H_{14}(xq^3),\\
x^{-3}q^{-6}H_{11}(x)&=H_0(xq^3)+H_1(xq^3)+H_3(xq^3)+H_4(xq^3)+H_6(xq^3)+H_7(xq^3)\notag\\
&\quad+H_8(xq^3)+H_{10}(xq^3)+H_{11}(xq^3)+H_{12}(xq^3)+H_{13}(xq^3)\notag\\
&\quad+H_{14}(xq^3),\\
x^{-3}q^{-7}H_{12}(x)&=H_0(xq^3)+H_3(xq^3)+H_6(xq^3)+H_7(xq^3)+H_{10}(xq^3)+H_{12}(xq^3)\notag\\
&\quad+H_{13}(xq^3),\\
x^{-2}q^{-6}H_{13}(x)&=H_0(xq^3)+H_1(xq^3)+H_3(xq^3)+H_7(xq^3)+H_8(xq^3)+H_{10}(xq^3)\notag\\
&\quad+H_{13}(xq^3)+H_{14}(xq^3),\\
x^{-3}q^{-7}H_{14}(x)&=H_0(xq^3)+H_1(xq^3)+H_3(xq^3)+H_7(xq^3)+H_8(xq^3)+H_{10}(xq^3)\notag\\
&\quad+H_{13}(xq^3)+H_{14}(xq^3).
\end{align}}

This system may be simplified as
\begin{framed}
{\begin{align}
H_0(x)&=(1+xq^4+x^2q^8+xq^5+x^2q^9+x^3q^{13})H_0(xq^3)\notag\\
&\quad+(xq^4+1+xq^4+x^2q^8+xq^5+x^2q^9)H_7(xq^3)\notag\\
&\quad+H_{12}(xq^3)+(1+xq^4)H_{13}(xq^3),\\
H_7(x)&=(xq^3+x^2q^7+x^2q^8+x^3q^{12})H_0(xq^3)\notag\\
&\quad+(x^2q^7+xq^3+x^2q^7+x^2q^8+x^3q^{12})H_7(xq^3)\notag\\
&\quad+xq^3H_{12}(xq^3)+(xq^3+x^2q^7)H_{13}(xq^3),\\
H_{12}(x)&=(x^3q^7+x^4q^{12})H_0(xq^3)+(x^4q^{11}+x^3q^7+x^4q^{12})H_7(xq^3)\notag\\
&\quad+x^3q^7H_{12}(xq^3)+x^3q^7H_{13}(xq^3),\\
H_{13}(x)&=(x^2q^6+x^3q^{10}+x^3q^{11})H_0(xq^3)+(x^2q^6+x^3q^{10}+x^3q^{11})H_7(xq^3)\notag\\
&\quad+(x^2q^6+x^3q^{10})H_{13}(xq^3).
\end{align}}
\end{framed}

Let $G_{\ps_{T_{\mathrm{III},1}}}(x)=G_{\ps_{T_{\mathrm{III},1}}}(x,q)$ (resp.~$G_{\ps_{T_{\mathrm{III},2}}}(x)$) denote the generating function of partitions in $\ps_{T_{\mathrm{III}}}$ whose smallest part is at least $1$ (resp.~$2$).

Let $G_{\ps_{T_{\mathrm{III},a}}}(x)$ denote the generating function of partitions in $\ps_{T_{\mathrm{III}}}$ where $1$ appears at most once.

It follows that
\begin{align}
G_{\ps_{T_{\mathrm{III},1}}}(x)&=H_0(x)+H_1(x)+H_2(x)+H_3(x)+H_4(x)+H_5(x)\notag\\
&\quad+H_6(x)+H_7(x)+H_8(x)+H_9(x)+H_{10}(x)\notag\\
&\quad+H_{11}(x)+H_{12}(x)+H_{13}(x)+H_{14}(x)\notag\\
&=H_0(xq^{-3}),\\
G_{\ps_{T_{\mathrm{III},2}}}(x)&=H_0(x)+H_3(x)+H_6(x)+H_7(x)+H_{10}(x)+H_{12}(x)\notag\\
&\quad+H_{13}(x)\notag\\
&=x^{-3}q^2H_{12}(xq^{-3}),\\
G_{\ps_{T_{\mathrm{III},a}}}(x)&=H_0(x)+H_1(x)+H_3(x)+H_4(x)+H_6(x)+H_7(x)\notag\\
&\quad+H_8(x)+H_{10}(x)+H_{11}(x)+H_{12}(x)+H_{13}(x)\notag\\
&\quad+H_{14}(x)\notag\\
&=x^{-1}H_7(xq^{-3}).
\end{align}

Analogously, we can use the algorithm in Section \ref{sec:q-diff} to deduce the following $q$-difference equations for $G_{\ps_{T_{\mathrm{III},1}}}(x)$, $G_{\ps_{T_{\mathrm{III},2}}}(x)$ and $G_{\ps_{T_{\mathrm{III},a}}}(x)$, respectively.

\begin{theorem}
It holds that
\begin{align}
p_0(x,q)G_{\ps_{T_{\mathrm{III},1}}}(x)&+p_3(x,q)G_{\ps_{T_{\mathrm{III},1}}}(xq^3)+p_6(x,q)G_{\ps_{T_{\mathrm{III},1}}}(x q^6)\nonumber\\
&+p_9(x,q)G_{\ps_{T_{\mathrm{III},1}}}(xq^9)+p_{12}(x,q)G_{\ps_{T_{\mathrm{III},1}}}(xq^{12})=0,
\end{align}
where
{\footnotesize\begin{align*}
p_0(x,q)&=1+x(q^{4}+q^{5}+2 q^{7}+q^{9}+q^{10})\\
&\quad+ x^2(q^{9}+2 q^{11}+q^{12}+q^{13}+2 q^{14}+q^{15}+q^{16}+2 q^{17}+q^{19})\\
&\quad+ x^3(q^{16}+q^{18}+q^{19}+2 q^{21}+q^{23}+q^{24}+q^{26}),\\
p_3(x,q)&=-1-x(q+q^{2}+q^{3}+q^{4}+q^{5}+2 q^{7}+q^{9}+q^{10})\\
&\quad
-x^2(q^{2}+q^{3}+2 q^{4}+2 q^{5}+3 q^{6}+2 q^{7}+3 q^{8}+3 q^{9}+3 q^{10}+4 q^{11}+3 q^{12}+2 q^{13}\\
&\quad\quad\quad\quad+2 q^{14}+q^{15}+q^{16}+2 q^{17}+q^{19})\\
&\quad
-  x^3(q^{4}+q^{5}+2 q^{6}+4 q^{7}+3 q^{8}+5 q^{9}+5 q^{10}+6 q^{11}+7 q^{12}+8 q^{13}+7 q^{14}+6 q^{15}\\
&\quad\quad\quad\quad+6 q^{16}+4 q^{17}+5 q^{18}+4 q^{19}+3 q^{20}+3 q^{21}+q^{22}+q^{23}+q^{24}+q^{26})\\
&\quad
-x^4(q^{8}+2 q^{9}+2 q^{10}+5 q^{11}+4 q^{12}+6 q^{13}+10 q^{14}+8 q^{15}+10 q^{16}+11 q^{17}+8 q^{18}\\
&\quad\quad\quad\quad+10 q^{19}+9 q^{20}+8 q^{21}+7 q^{22}+6 q^{23}+4 q^{24}+3 q^{25}+2 q^{26}+2 q^{27}+q^{28}\\
&\quad\quad\quad\quad+q^{29})\\
&\quad
-x^5(q^{13}+3 q^{15}+3 q^{16}+4 q^{17}+7 q^{18}+6 q^{19}+7 q^{20}+10 q^{21}+7 q^{22}+9 q^{23}+9 q^{24}\\
&\quad\quad\quad\quad+6 q^{25}+7 q^{26}+5 q^{27}+4 q^{28}+3 q^{29}+3 q^{30}+q^{31}+q^{32})\\
&\quad
- x^6(q^{20}+2 q^{22}+2 q^{23}+q^{24}+4 q^{25}+2 q^{26}+3 q^{27}+4 q^{28}+2 q^{29}+3 q^{30}+3 q^{31}\\
&\quad\quad\quad\quad+q^{32}+2 q^{33}+q^{34}+q^{36}),\\
p_6(x,q)&=x^4(q^{12}+q^{14}+2 q^{16}+q^{18}+q^{20})\\
&\quad
+x^5(q^{13}+2 q^{15}+q^{16}+4 q^{17}+3 q^{18}+4 q^{19}+4 q^{20}+5 q^{21}+5 q^{22}+4 q^{23}+4 q^{24}\\
&\quad\quad\quad\quad+3 q^{25}+4 q^{26}+q^{27}+2 q^{28}+q^{30})\\
&\quad
+x^6(q^{17}+2 q^{18}+3 q^{19}+5 q^{20}+5 q^{21}+9 q^{22}+9 q^{23}+10 q^{24}+12 q^{25}+12 q^{26}\\
&\quad\quad\quad\quad+14 q^{27}+12 q^{28}+12 q^{29}+10 q^{30}+9 q^{31}+9 q^{32}+5 q^{33}+5 q^{34}+3 q^{35}\\
&\quad\quad\quad\quad+2 q^{36}+q^{37})\\
&\quad
+x^7(q^{22}+3 q^{23}+4 q^{24}+6 q^{25}+7 q^{26}+12 q^{27}+12 q^{28}+16 q^{29}+18 q^{30}+16 q^{31}\\
&\quad\quad\quad\quad+19 q^{32}+19 q^{33}+16 q^{34}+18 q^{35}+16 q^{36}+12 q^{37}+12 q^{38}+7 q^{39}+6 q^{40}\\
&\quad\quad\quad\quad+4 q^{41}+3 q^{42}+q^{43})\\
&\quad
+x^8(q^{28}+2 q^{29}+3 q^{30}+6 q^{31}+6 q^{32}+9 q^{33}+11 q^{34}+11 q^{35}+13 q^{36}+16 q^{37}\\
&\quad\quad\quad\quad+12 q^{38}+16 q^{39}+13 q^{40}+11 q^{41}+11 q^{42}+9 q^{43}+6 q^{44}+6 q^{45}+3 q^{46}\\
&\quad\quad\quad\quad+2 q^{47}+q^{48})\\
&\quad
+x^9(q^{35}+q^{36}+q^{37}+3 q^{38}+2 q^{39}+3 q^{40}+5 q^{41}+3 q^{42}+5 q^{43}+5 q^{44}+3 q^{45}\\
&\quad\quad\quad\quad+5 q^{46}+3 q^{47}+2 q^{48}+3 q^{49}+q^{50}+q^{51}+q^{52}),\\
p_{9}(x,q)&=-x^6q^{30}
-x^7(q^{31}+q^{32}+2 q^{34}+q^{36}+q^{37}+q^{38}+q^{39}+q^{40})\\
&\quad
-x^8(q^{33}+2 q^{35}+q^{36}+q^{37}+2 q^{38}+2 q^{39}+3 q^{40}+4 q^{41}+3 q^{42}+3 q^{43}+3 q^{44}\\
&\quad\quad\quad\quad+2 q^{45}+3 q^{46}+2 q^{47}+2 q^{48}+q^{49}+q^{50})\\
&\quad
-x^9(q^{37}+q^{39}+q^{40}+q^{41}+3 q^{42}+3 q^{43}+4 q^{44}+5 q^{45}+4 q^{46}+6 q^{47}+6 q^{48}\\
&\quad\quad\quad\quad+7 q^{49}+8 q^{50}+7 q^{51}+6 q^{52}+5 q^{53}+5 q^{54}+3 q^{55}+4 q^{56}+2 q^{57}+q^{58}\\
&\quad\quad\quad\quad+q^{59})\\
&\quad
-x^{10}(q^{45}+q^{46}+2 q^{47}+2 q^{48}+3 q^{49}+4 q^{50}+6 q^{51}+7 q^{52}+8 q^{53}+9 q^{54}+10 q^{55}\\
&\quad\quad\quad\quad+8 q^{56}+11 q^{57}+10 q^{58}+8 q^{59}+10 q^{60}+6 q^{61}+4 q^{62}+5 q^{63}+2 q^{64}+2 q^{65}\\
&\quad\quad\quad\quad+q^{66})\\
&\quad
-x^{11}(q^{53}+q^{54}+3 q^{55}+3 q^{56}+4 q^{57}+5 q^{58}+7 q^{59}+6 q^{60}+9 q^{61}+9 q^{62}+7 q^{63}\\
&\quad\quad\quad\quad+10 q^{64}+7 q^{65}+6 q^{66}+7 q^{67}+4 q^{68}+3 q^{69}+3 q^{70}+q^{72})\\
&\quad
-x^{12}(q^{60}+q^{62}+2 q^{63}+q^{64}+3 q^{65}+3 q^{66}+2 q^{67}+4 q^{68}+3 q^{69}+2 q^{70}+4 q^{71}\\
&\quad\quad\quad\quad+q^{72}+2 q^{73}+2 q^{74}+q^{76}),\\
\intertext{and}
p_{12}(x,q)&=x^{12}q^{90} + x^{13}(q^{91}+q^{92}+2 q^{94}+q^{96}+q^{97})\\
&\quad+ x^{14}(q^{93}+2 q^{95}+q^{96}+q^{97}+2 q^{98}+q^{99}+q^{100}+2 q^{101}+q^{103})\\
&\quad+x^{15}(q^{97}+q^{99}+q^{100}+2 q^{102}+q^{104}+q^{105}+q^{107}).
\end{align*}}
\end{theorem}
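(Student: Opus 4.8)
The plan is to mimic, with $k=4$ functions in place of $k=3$, the elimination carried out for Theorem~\ref{th:q-diff-T11}. The starting point is the simplified system for $(H_0,H_7,H_{12},H_{13})$ displayed just above; it is obtained from the full fifteen-equation system via \eqref{eq:important-gf-2} together with the proportionalities forced by the repeated linking sets in Claim~\ref{claim:T3-3} (for instance $H_1=xqH_0$, $H_2=x^2q^2H_0$, $H_6=xqH_7$, $H_{14}=xqH_{13}$, and so on), and it comes with the identity $G_{\ps_{T_{\mathrm{III},1}}}(x)=H_0(xq^{-3})$.

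First I would run the algorithm of Section~\ref{sec:q-diff} on this $4\times4$ system with modulus $m=3$: put $u_0(x)=H_0(x)$ in Step~(1), and in Steps~(2), (3), (4) introduce $u_7$, $u_{12}$, $u_{13}$ via the substitutions dictated by $\tP_1$, $\tP_2$, $\tP_3$ respectively, forming $\tP_s=T(xq^{-3})\,\tP_{s-1}\,T(x)^{-1}$ at each stage. One must check along the way that the algorithm does not terminate before the fourth step and that no row interchange is needed, i.e.\ that the pivot entries $\tp_{s-1,s}(x)$ are not identically zero; this is transparent from the explicit (nonzero polynomial) entries of the framed system. By Claim~\ref{claim:q-diff}, after Step~(4) one is left with an ``almost lower-triangular'' system
\begin{align*}
u_0(x)&=r_{0,0}(x)\,u_0(xq^3)+u_7(xq^3),\\
u_7(x)&=r_{7,0}(x)\,u_0(xq^3)+r_{7,7}(x)\,u_7(xq^3)+u_{12}(xq^3),\\
u_{12}(x)&=r_{12,0}(x)\,u_0(xq^3)+r_{12,7}(x)\,u_7(xq^3)+r_{12,12}(x)\,u_{12}(xq^3)+u_{13}(xq^3),\\
u_{13}(x)&=r_{13,0}(x)\,u_0(xq^3)+r_{13,7}(x)\,u_7(xq^3)+r_{13,12}(x)\,u_{12}(xq^3)+r_{13,13}(x)\,u_{13}(xq^3),
\end{align*}
whose coefficients $r_{i,j}$ are explicit rational functions of $x$ and $q$ read off from $\tP_4$.

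Next, exactly as in \eqref{eq:u0-eli}--\eqref{eq:u6-eli}, I would eliminate $u_7$, $u_{12}$, $u_{13}$ one at a time. Using $u_0(xq^{-3})=H_0(xq^{-6})=G_{\ps_{T_{\mathrm{III},1}}}(x)$, the first equation expresses $u_7$ as a $\mathbb{Q}(x,q)$-linear combination of $G_{\ps_{T_{\mathrm{III},1}}}$ at two consecutive $q^3$-dilates of $x$; substituting this into the second expresses $u_{12}$ in terms of three such dilates; substituting both into the third expresses $u_{13}$ in terms of four; and substituting all three into the fourth equation yields, after the overall shift $x\mapsto xq^6$, a relation among $G_{\ps_{T_{\mathrm{III},1}}}(x)$, $G_{\ps_{T_{\mathrm{III},1}}}(xq^3)$, $G_{\ps_{T_{\mathrm{III},1}}}(xq^6)$, $G_{\ps_{T_{\mathrm{III},1}}}(xq^9)$ and $G_{\ps_{T_{\mathrm{III},1}}}(xq^{12})$. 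Clearing denominators and collecting coefficients turns this into a polynomial identity, and matching the coefficients against $p_0$, $p_3$, $p_6$, $p_9$, $p_{12}$ finishes the proof.

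The only genuine obstacle is the size of the computation: the matrices $\tP_2,\tP_3,\tP_4$ and the rational functions $r_{i,j}$ carry bulky denominators, and the final clearing of denominators and simplification to the stated polynomial form (with monomials of degree as high as $x^{15}$ and $q^{107}$) is not something one wants to do by hand. As with Theorem~\ref{th:q-diff-T11}, it is carried out with the \emph{Mathematica} routine mentioned above, the individual steps being entirely mechanical.
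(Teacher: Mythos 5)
Your proposal is correct and follows essentially the same route as the paper: the paper likewise applies the Section~\ref{sec:q-diff} elimination algorithm to the framed $4\times 4$ system for $(H_0,H_7,H_{12},H_{13})$ and lets \emph{Mathematica} handle the bookkeeping, exactly as worked out in detail for Theorem~\ref{th:q-diff-T11}. (Note only the typo $u_0(xq^{-3})=H_0(xq^{-6})$, which should read $u_0(xq^{-3})=H_0(xq^{-3})=G_{\ps_{T_{\mathrm{III},1}}}(x)$; your subsequent use of this identity, and the final shift $x\mapsto xq^6$, are correct.)
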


\begin{theorem}
It holds that
\begin{align}
p_0(x,q)G_{\ps_{T_{\mathrm{III},2}}}(x)&+p_3(x,q)G_{\ps_{T_{\mathrm{III},2}}}(xq^3)+p_6(x,q)G_{\ps_{T_{\mathrm{III},2}}}(x q^6)\nonumber\\
&+p_9(x,q)G_{\ps_{T_{\mathrm{III},2}}}(xq^9)+p_{12}(x,q)G_{\ps_{T_{\mathrm{III},2}}}(xq^{12})=0,
\end{align}
where
{\footnotesize\begin{align*}
p_0(x,q)&=1+x(q^{5}+q^{6}+2 q^{8}+q^{10}+q^{11})\\
&\quad+ x^2(q^{11}+2 q^{13}+q^{14}+q^{15}+2 q^{16}+q^{17}+q^{18}+2 q^{19}+q^{21})\\
&\quad+ x^3(q^{19}+q^{21}+q^{22}+2 q^{24}+q^{26}+q^{27}+q^{29}),\\
p_3(x,q)&=-1-x(q^{2}+q^{3}+q^{4}+q^{5}+q^{6}+2 q^{8}+q^{10}+q^{11})\\
&\quad
-x^2(q^{4}+q^{5}+2 q^{6}+2 q^{7}+3 q^{8}+2 q^{9}+3 q^{10}+3 q^{11}+3 q^{12}+4 q^{13}+3 q^{14}+2 q^{15}\\
&\quad\quad\quad\quad+2 q^{16}+q^{17}+q^{18}+2 q^{19}+q^{21})\\
&\quad
-  x^3(q^{7}+q^{8}+2 q^{9}+4 q^{10}+3 q^{11}+5 q^{12}+5 q^{13}+6 q^{14}+7 q^{15}+8 q^{16}+7 q^{17}+6 q^{18}\\
&\quad\quad\quad\quad+6 q^{19}+4 q^{20}+5 q^{21}+4 q^{22}+3 q^{23}+3 q^{24}+q^{25}+q^{26}+q^{27}+q^{29})\\
&\quad
-x^4(q^{12}+2 q^{13}+2 q^{14}+5 q^{15}+4 q^{16}+6 q^{17}+10 q^{18}+8 q^{19}+10 q^{20}+11 q^{21}+8 q^{22}\\
&\quad\quad\quad\quad+10 q^{23}+9 q^{24}+8 q^{25}+7 q^{26}+6 q^{27}+4 q^{28}+3 q^{29}+2 q^{30}+2 q^{31}+q^{32}+q^{33})\\
&\quad
-x^5(q^{18}+3 q^{20}+3 q^{21}+4 q^{22}+7 q^{23}+6 q^{24}+7 q^{25}+10 q^{26}+7 q^{27}+9 q^{28}+9 q^{29}\\
&\quad\quad\quad\quad+6 q^{30}+7 q^{31}+5 q^{32}+4 q^{33}+3 q^{34}+3 q^{35}+q^{36}+q^{37})\\
&\quad
- x^6(q^{26}+2 q^{28}+2 q^{29}+q^{30}+4 q^{31}+2 q^{32}+3 q^{33}+4 q^{34}+2 q^{35}+3 q^{36}+3 q^{37}\\
&\quad\quad\quad\quad+q^{38}+2 q^{39}+q^{40}+q^{42}),\\
p_6(x,q)&=x^4(q^{16}+q^{18}+2 q^{20}+q^{22}+q^{24})\\
&\quad
+x^5(q^{18}+2 q^{20}+q^{21}+4 q^{22}+3 q^{23}+4 q^{24}+4 q^{25}+5 q^{26}+5 q^{27}+4 q^{28}+4 q^{29}\\
&\quad\quad\quad\quad+3 q^{30}+4 q^{31}+q^{32}+2 q^{33}+q^{35})\\
&\quad
+x^6(q^{23}+2 q^{24}+3 q^{25}+5 q^{26}+5 q^{27}+9 q^{28}+9 q^{29}+10 q^{30}+12 q^{31}+12 q^{32}\\
&\quad\quad\quad\quad+14 q^{33}+12 q^{34}+12 q^{35}+10 q^{36}+9 q^{37}+9 q^{38}+5 q^{39}+5 q^{40}+3 q^{41}\\
&\quad\quad\quad\quad+2 q^{42}+q^{43})\\
&\quad
+x^7(q^{29}+3 q^{30}+4 q^{31}+6 q^{32}+7 q^{33}+12 q^{34}+12 q^{35}+16 q^{36}+18 q^{37}+16 q^{38}\\
&\quad\quad\quad\quad+19 q^{39}+19 q^{40}+16 q^{41}+18 q^{42}+16 q^{43}+12 q^{44}+12 q^{45}+7 q^{46}+6 q^{47}\\
&\quad\quad\quad\quad+4 q^{48}+3 q^{49}+q^{50})\\
&\quad
+x^8(q^{36}+2 q^{37}+3 q^{38}+6 q^{39}+6 q^{40}+9 q^{41}+11 q^{42}+11 q^{43}+13 q^{44}+16 q^{45}\\
&\quad\quad\quad\quad+12 q^{46}+16 q^{47}+13 q^{48}+11 q^{49}+11 q^{50}+9 q^{51}+6 q^{52}+6 q^{53}+3 q^{54}\\
&\quad\quad\quad\quad+2 q^{55}+q^{56})\\
&\quad
+x^9(q^{44}+q^{45}+q^{46}+3 q^{47}+2 q^{48}+3 q^{49}+5 q^{50}+3 q^{51}+5 q^{52}+5 q^{53}+3 q^{54}\\
&\quad\quad\quad\quad+5 q^{55}+3 q^{56}+2 q^{57}+3 q^{58}+q^{59}+q^{60}+q^{61}),\\
p_{9}(x,q)&=-x^6q^{36}
-x^7(q^{38}+q^{39}+2 q^{41}+q^{43}+q^{44}+q^{45}+q^{46}+q^{47})\\
&\quad
-x^8(q^{41}+2 q^{43}+q^{44}+q^{45}+2 q^{46}+2 q^{47}+3 q^{48}+4 q^{49}+3 q^{50}+3 q^{51}+3 q^{52}\\
&\quad\quad\quad\quad+2 q^{53}+3 q^{54}+2 q^{55}+2 q^{56}+q^{57}+q^{58})\\
&\quad
-x^9(q^{46}+q^{48}+q^{49}+q^{50}+3 q^{51}+3 q^{52}+4 q^{53}+5 q^{54}+4 q^{55}+6 q^{56}+6 q^{57}\\
&\quad\quad\quad\quad+7 q^{58}+8 q^{59}+7 q^{60}+6 q^{61}+5 q^{62}+5 q^{63}+3 q^{64}+4 q^{65}+2 q^{66}+q^{67}\\
&\quad\quad\quad\quad+q^{68})\\
&\quad
-x^{10}(q^{55}+q^{56}+2 q^{57}+2 q^{58}+3 q^{59}+4 q^{60}+6 q^{61}+7 q^{62}+8 q^{63}+9 q^{64}+10 q^{65}\\
&\quad\quad\quad\quad+8 q^{66}+11 q^{67}+10 q^{68}+8 q^{69}+10 q^{70}+6 q^{71}+4 q^{72}+5 q^{73}+2 q^{74}+2 q^{75}\\
&\quad\quad\quad\quad+q^{76})\\
&\quad
-x^{11}(q^{64}+q^{65}+3 q^{66}+3 q^{67}+4 q^{68}+5 q^{69}+7 q^{70}+6 q^{71}+9 q^{72}+9 q^{73}+7 q^{74}\\
&\quad\quad\quad\quad+10 q^{75}+7 q^{76}+6 q^{77}+7 q^{78}+4 q^{79}+3 q^{80}+3 q^{81}+q^{83})\\
&\quad
-x^{12}(q^{72}+q^{74}+2 q^{75}+q^{76}+3 q^{77}+3 q^{78}+2 q^{79}+4 q^{80}+3 q^{81}+2 q^{82}+4 q^{83}\\
&\quad\quad\quad\quad+q^{84}+2 q^{85}+2 q^{86}+q^{88}),\\
\intertext{and}
p_{12}(x,q)&=x^{12}q^{102} + x^{13}(q^{104}+q^{105}+2 q^{107}+q^{109}+q^{110})\\
&\quad+ x^{14}(q^{107}+2 q^{109}+q^{110}+q^{111}+2 q^{112}+q^{113}+q^{114}+2 q^{115}+q^{117})\\
&\quad+x^{15}(q^{112}+q^{114}+q^{115}+2 q^{117}+q^{119}+q^{120}+q^{122}).
\end{align*}}
\end{theorem}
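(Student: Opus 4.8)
The plan is to follow verbatim the strategy used above for the partition set of type I, now applied to the coupled four‑function system recorded in the boxed display for $H_0$, $H_7$, $H_{12}$ and $H_{13}$. Since $G_{\ps_{T_{\mathrm{III},2}}}(x)=x^{-3}q^2H_{12}(xq^{-3})$, the function that must be isolated is $H_{12}$, so I first relabel the boxed system so that $H_{12}$ plays the role of $F_1$ in \eqref{eq:system-gen}; concretely I set $u_1(x)=H_{12}(x)$ and keep the remaining unknowns ordered, say, as $H_0$, $H_7$, $H_{13}$. (The modulus here is $3$ and all spans in Claim \ref{claim:T3-3} equal $1$, which is why every shift appearing in the eventual recurrence is by $q^3$.)

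Next I run the matrix algorithm of Section \ref{sec:q-diff}. Starting from the $4\times 4$ coefficient matrix read off from the boxed system, Step (1) records $u_1=H_{12}$; Steps (2)--(4) then successively introduce $u_2,u_3,u_4$ through the substitutions $u_s(xq^3)=\tp_{s-1,s}(x)F_s(xq^3)+\cdots+\tp_{s-1,4}(x)F_4(xq^3)$ and the conjugations $\tP_s=T(xq^{-3})\tP_{s-1}T(x)^{-1}$ as in \eqref{eq:step-i-final}. By Claim \ref{claim:q-diff} the matrices $\tP_s$ take the staircase form, and the algorithm runs for (at most) four steps because the system is genuinely coupled; we are then left with a new system
\begin{align*}
u_1(x)&=r_{1,1}(x)u_1(xq^3)+u_2(xq^3),\\
u_2(x)&=r_{2,1}(x)u_1(xq^3)+r_{2,2}(x)u_2(xq^3)+u_3(xq^3),\\
u_3(x)&=r_{3,1}(x)u_1(xq^3)+r_{3,2}(x)u_2(xq^3)+r_{3,3}(x)u_3(xq^3)+u_4(xq^3),\\
u_4(x)&=r_{4,1}(x)u_1(xq^3)+r_{4,2}(x)u_2(xq^3)+r_{4,3}(x)u_3(xq^3)+r_{4,4}(x)u_4(xq^3),
\end{align*}
where the $r$'s are explicit rational functions in $x$ and $q$ coming from $\tP_4$.

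Finally I eliminate. From $G_{\ps_{T_{\mathrm{III},2}}}(x)=x^{-3}q^2u_1(xq^{-3})$ I express $u_1$ in terms of $G_{\ps_{T_{\mathrm{III},2}}}$; the first equation then gives $u_2$ as a combination of $G_{\ps_{T_{\mathrm{III},2}}}$ evaluated at $x$ and $xq^3$, the second gives $u_3$ in terms of the arguments $x,xq^3,xq^6$, the third gives $u_4$ in terms of arguments up to $xq^9$, and substituting all four expressions into the fourth equation produces a linear relation among $G_{\ps_{T_{\mathrm{III},2}}}(x),G_{\ps_{T_{\mathrm{III},2}}}(xq^3),\ldots,G_{\ps_{T_{\mathrm{III},2}}}(xq^{12})$. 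Clearing the common denominator and collecting coefficients should yield exactly the stated $p_0$, $p_3$, $p_6$, $p_9$ and $p_{12}$.

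The main obstacle is purely computational: the rational functions $r_{i,j}$ and the iterated eliminations generate bulky numerators and denominators, and verifying that after clearing denominators the coefficients collapse to the displayed polynomials is only practical inside a computer algebra system, exactly as the authors remark in the type I case. A useful independent check is to expand both sides of the resulting recurrence as a power series in $x$ (with polynomial coefficients in $q$) against the generating function computed directly from the combinatorial definition of the type III partitions with smallest part at least $2$, confirming agreement to high order; this also safeguards against sign errors and mislabelings introduced during the relabeling and pivoting steps.
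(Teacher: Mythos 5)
Your proposal matches the paper's own method: the authors likewise take the boxed four-function system in $H_0$, $H_7$, $H_{12}$, $H_{13}$, reorder it so that the function tied to the target generating function (here $H_{12}$, via $G_{\ps_{T_{\mathrm{III},2}}}(x)=x^{-3}q^2H_{12}(xq^{-3})$) sits in the first slot, run the matrix elimination of Section~\ref{sec:q-diff} to reach the triangular $u_1,\ldots,u_4$ system, and then back-substitute to obtain the order-four recurrence in shifts by $q^3$, with the final polynomial bookkeeping delegated to \emph{Mathematica}. This is exactly the ``analogously'' argument the paper invokes after the fully worked type~I example, so your proof is correct and essentially identical to theirs.
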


\begin{theorem}
It holds that
\begin{align}
p_0(x,q)G_{\ps_{T_{\mathrm{III},a}}}(x)&+p_3(x,q)G_{\ps_{T_{\mathrm{III},a}}}(xq^3)+p_6(x,q)G_{\ps_{T_{\mathrm{III},a}}}(x q^6)\nonumber\\
&+p_9(x,q)G_{\ps_{T_{\mathrm{III},a}}}(xq^9)+p_{12}(x,q)G_{\ps_{T_{\mathrm{III},a}}}(xq^{12})=0,
\end{align}
where
{\footnotesize\begin{align*}
p_0(x,q)&=1+x(q^4+q^5+q^7+q^8+q^{10}+q^{11})\\
&\quad+ x^2(q^9+q^{11}+2 q^{12}+q^{14}+2 q^{15}+q^{16}+2 q^{18}+q^{19}+q^{21})\\
&\quad+ x^3(q^{16}+q^{19}+q^{20}+q^{22}+q^{23}+q^{25}+q^{26}+q^{29}),\\
p_3(x,q)&=-1-x(q+q^2+q^3+q^4+q^5+q^7+q^8+q^{10}+q^{11})\\
&\quad
-x^2(q^{3}+2 q^{4}+2 q^{5}+3 q^{6}+2 q^{7}+3 q^{8}+3 q^{9}+2 q^{10}+3 q^{11}+4 q^{12}+2 q^{13}+2 q^{14}\\
&\quad\quad\quad\quad+2 q^{15}+q^{16}+2 q^{18}+q^{19}+q^{21})\\
&\quad
-  x^3(q^{6}+3 q^{7}+3 q^{8}+4 q^{9}+5 q^{10}+5 q^{11}+5 q^{12}+7 q^{13}+7 q^{14}+7 q^{15}+7 q^{16}+5 q^{17}\\
&\quad\quad\quad\quad+4 q^{18}+5 q^{19}+4 q^{20}+3 q^{21}+3 q^{22}+2 q^{23}+q^{24}+q^{25}+q^{26}+q^{29})\\
&\quad
-x^4(q^{10}+3 q^{11}+3 q^{12}+3 q^{13}+6 q^{14}+7 q^{15}+7 q^{16}+10 q^{17}+10 q^{18}+8 q^{19}+9 q^{20}\\
&\quad\quad\quad\quad+9 q^{21}+8 q^{22}+8 q^{23}+7 q^{24}+5 q^{25}+5 q^{26}+4 q^{27}+2 q^{28}+2 q^{29}+q^{30}\\
&\quad\quad\quad\quad+q^{31}+q^{32})\\
&\quad
-x^5(q^{15}+q^{16}+q^{17}+4 q^{18}+5 q^{19}+3 q^{20}+6 q^{21}+8 q^{22}+5 q^{23}+7 q^{24}+10 q^{25}\\
&\quad\quad\quad\quad+6 q^{26}+6 q^{27}+9 q^{28}+5 q^{29}+4 q^{30}+5 q^{31}+3 q^{32}+2 q^{33}+3 q^{34}+q^{35}\\
&\quad\quad\quad\quad+q^{37})\\
&\quad
- x^6(q^{22}+q^{23}+q^{25}+3 q^{26}+q^{27}+q^{28}+4 q^{29}+2 q^{30}+q^{31}+4 q^{32}+3 q^{33}+3 q^{35}\\
&\quad\quad\quad\quad+3 q^{36}+q^{38}+2 q^{39}+q^{42}),\\
p_6(x,q)&=x^4(q^{16}+q^{18}+q^{19}+q^{20}+q^{21}+q^{23})\\
&\quad
+x^5(q^{17}+q^{19}+3 q^{20}+3 q^{21}+3 q^{22}+4 q^{23}+5 q^{24}+4 q^{25}+4 q^{26}+5 q^{27}+4 q^{28}\\
&\quad\quad\quad\quad+3 q^{29}+3 q^{30}+3 q^{31}+q^{32}+q^{34})\\
&\quad
+x^6(q^{21}+2 q^{22}+2 q^{23}+4 q^{24}+6 q^{25}+8 q^{26}+8 q^{27}+10 q^{28}+10 q^{29}+11 q^{30}\\
&\quad\quad\quad\quad+13 q^{31}+13 q^{32}+11 q^{33}+10 q^{34}+10 q^{35}+8 q^{36}+8 q^{37}+6 q^{38}\\
&\quad\quad\quad\quad+4 q^{39}+2 q^{40}+2 q^{41}+q^{42})\\
&\quad
+x^7(q^{26}+3 q^{27}+3 q^{28}+4 q^{29}+7 q^{30}+8 q^{31}+11 q^{32}+14 q^{33}+14 q^{34}+15 q^{35}\\
&\quad\quad\quad\quad+17 q^{36}+17 q^{37}+17 q^{38}+17 q^{39}+15 q^{40}+14 q^{41}+14 q^{42}+11 q^{43}\\
&\quad\quad\quad\quad+8 q^{44}+7 q^{45}+4 q^{46}+3 q^{47}+3 q^{48}+q^{49})\\
&\quad
+x^8(q^{31}+q^{33}+4 q^{34}+3 q^{35}+4 q^{36}+8 q^{37}+8 q^{38}+7 q^{39}+12 q^{40}+12 q^{41}+10 q^{42}\\
&\quad\quad\quad\quad+14 q^{43}+14 q^{44}+10 q^{45}+12 q^{46}+12 q^{47}+7 q^{48}+8 q^{49}+8 q^{50}+4 q^{51}\\
&\quad\quad\quad\quad+3 q^{52}+4 q^{53}+q^{54}+q^{56})\\
&\quad
+x^9(q^{38}+2 q^{41}+2 q^{42}+3 q^{44}+4 q^{45}+q^{46}+3 q^{47}+6 q^{48}+2 q^{49}+2 q^{50}+6 q^{51}\\
&\quad\quad\quad\quad+3 q^{52}+q^{53}+4 q^{54}+3 q^{55}+2 q^{57}+2 q^{58}+q^{61}),\\
p_{9}(x,q)&=-x^6q^{36}
-x^7(q^{37}+q^{38}+q^{40}+q^{41}+q^{43}+q^{44}+q^{45}+q^{46}+q^{47})\\
&\quad
-x^8(q^{39}+q^{41}+2 q^{42}+q^{44}+2 q^{45}+2 q^{46}+2 q^{47}+4 q^{48}+3 q^{49}+2 q^{50}+3 q^{51}\\
&\quad\quad\quad\quad+3 q^{52}+2 q^{53}+3 q^{54}+2 q^{55}+2 q^{56}+q^{57})\\
&\quad
-x^9(q^{43}+q^{46}+q^{47}+q^{48}+2 q^{49}+3 q^{50}+3 q^{51}+4 q^{52}+5 q^{53}+4 q^{54}+5 q^{55}\\
&\quad\quad\quad\quad+7 q^{56}+7 q^{57}+7 q^{58}+7 q^{59}+5 q^{60}+5 q^{61}+5 q^{62}+4 q^{63}+3 q^{64}\\
&\quad\quad\quad\quad+3 q^{65}+q^{66})\\
&\quad
-x^{10}(q^{52}+q^{53}+q^{54}+2 q^{55}+2 q^{56}+4 q^{57}+5 q^{58}+5 q^{59}+7 q^{60}+8 q^{61}+8 q^{62}\\
&\quad\quad\quad\quad+9 q^{63}+9 q^{64}+8 q^{65}+10 q^{66}+10 q^{67}+7 q^{68}+7 q^{69}+6 q^{70}+3 q^{71}\\
&\quad\quad\quad\quad+3 q^{72}+3 q^{73}+q^{74})\\
&\quad
-x^{11}(q^{59}+q^{61}+3 q^{62}+2 q^{63}+3 q^{64}+5 q^{65}+4 q^{66}+5 q^{67}+9 q^{68}+6 q^{69}+6 q^{70}\\
&\quad\quad\quad\quad+10 q^{71}+7 q^{72}+5 q^{73}+8 q^{74}+6 q^{75}+3 q^{76}+5 q^{77}+4 q^{78}+q^{79}+q^{80}\\
&\quad\quad\quad\quad+q^{81})\\
&\quad
-x^{12}(q^{66}+2 q^{69}+q^{70}+3 q^{72}+3 q^{73}+3 q^{75}+4 q^{76}+q^{77}+2 q^{78}+4 q^{79}+q^{80}\\
&\quad\quad\quad\quad+q^{81}+3 q^{82}+q^{83}+q^{85}+q^{86}),\\
\intertext{and}
p_{12}(x,q)&=x^{12}q^{99} + x^{13}(q^{100}+q^{101}+q^{103}+q^{104}+q^{106}+q^{107})\\
&\quad+ x^{14}(q^{102}+q^{104}+2 q^{105}+q^{107}+2 q^{108}+q^{109}+2 q^{111}+q^{112}+q^{114})\\
&\quad+x^{15}(q^{106}+q^{109}+q^{110}+q^{112}+q^{113}+q^{115}+q^{116}+q^{119}).
\end{align*}}
\end{theorem}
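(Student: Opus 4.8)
The plan is to follow verbatim the strategy used above for the partition sets of types I and II. The starting point is the simplified $4\times 4$ system of $q$-difference equations for $H_0(x)$, $H_7(x)$, $H_{12}(x)$ and $H_{13}(x)$ displayed in the framed box above; it is exactly of the form \eqref{eq:system-gen} with $m=3$ and $k=4$. Since $G_{\ps_{T_{\mathrm{III},a}}}(x)=x^{-1}H_7(xq^{-3})$, it suffices to extract from this system a $q$-difference equation involving $H_7$ alone and then to rewrite that equation in terms of $G_{\ps_{T_{\mathrm{III},a}}}$.

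To obtain the equation for $H_7$, I would relabel the four unknowns so that $H_7$ plays the role of $F_1$ and then run the elimination algorithm of Section \ref{sec:q-diff}. Concretely, in Step $(1)$ I set $u_1(x)=H_7(x)$, and I iterate Steps $(2)$, $(3)$ and $(4)$, recording the matrices $\tP_1,\tP_2,\tP_3,\tP_4$ and checking at each stage that the pivot entry $\tp_{s-1,s}(x)$ is not identically zero (swapping two of the remaining $F$'s should a pivot vanish, as permitted in Section \ref{sec:q-diff}, so that the matrices $T(x)$ stay invertible). Because $k=4$, the algorithm terminates after at most four steps and yields the system described under \textbf{Final setup} in Section \ref{sec:q-diff}: four equations in $u_1=H_7,u_2,u_3,u_4$ with coefficients rational in $x$ and $q$. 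Eliminating the three auxiliary functions $u_2,u_3,u_4$ in succession then leaves a single $q$-difference equation
\[
c_0(x)H_7(x)+c_3(x)H_7(xq^3)+c_6(x)H_7(xq^6)+c_9(x)H_7(xq^9)+c_{12}(x)H_7(xq^{12})=0
\]
with $c_{3j}$ rational in $x$ and $q$; this is of order $4$, exactly as the $3\times 3$ systems of types I and II produced order-$3$ equations.

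To finish, I use that $G_{\ps_{T_{\mathrm{III},a}}}(x)=x^{-1}H_7(xq^{-3})$ gives $H_7(xq^{3j})=xq^{3j+3}\,G_{\ps_{T_{\mathrm{III},a}}}(xq^{3j+3})$ for $0\le j\le 4$; substituting these and then replacing $x$ by $xq^{-3}$ turns the displayed equation into a relation among $G_{\ps_{T_{\mathrm{III},a}}}(x)$, $G_{\ps_{T_{\mathrm{III},a}}}(xq^3)$, $G_{\ps_{T_{\mathrm{III},a}}}(xq^6)$, $G_{\ps_{T_{\mathrm{III},a}}}(xq^9)$ and $G_{\ps_{T_{\mathrm{III},a}}}(xq^{12})$. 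Clearing the common denominator of the coefficients and normalizing should produce precisely the polynomials $p_0,p_3,p_6,p_9,p_{12}$ in the statement.

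The routine but by far the most laborious part is the symbolic bookkeeping: the entries of $\tP_2,\tP_3,\tP_4$ and the intermediate formulas for $u_2,u_3,u_4$ are bulky rational functions, and the real work is checking that after the three eliminations and the clearing of denominators everything collapses to the compact polynomials listed; I would carry this out with the same \textit{Mathematica} code used for types I and II. The only genuine subtlety is confirming that the pivots do not vanish at any step, which is needed for the algorithm of Section \ref{sec:q-diff} to apply, and as an independent safeguard I would expand $G_{\ps_{T_{\mathrm{III},a}}}(x)$ directly from the definition of $\ps_{T_{\mathrm{III}}}$ as a power series to high order in $q$ (with $x$ tracked) and verify that it satisfies the claimed recurrence.
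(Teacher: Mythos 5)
Your proposal is correct and follows essentially the same route as the paper: the paper also takes the simplified $4\times 4$ system for $H_0,H_7,H_{12},H_{13}$, runs the elimination algorithm of Section \ref{sec:q-diff} with the relevant generating function ($H_7$ here, via $G_{\ps_{T_{\mathrm{III},a}}}(x)=x^{-1}H_7(xq^{-3})$) in the role of $F_1$, and obtains the order-four equation exactly as you describe, with the symbolic bookkeeping delegated to \textit{Mathematica}. Your added safeguard of checking the recurrence against a direct series expansion is sensible but not part of the paper's argument.
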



\subsection{Partition set of type IV}

Recall that the partition set of type IV is the set of partitions with difference at least $3$ at distance $3$ such that if parts at distance $2$ differ by at most $1$, then the sum of the two parts and their intermediate part is congruent to $2$ modulo $3$. In other words, if $\lambda=\lambda_1+\lambda_2+\cdots+\lambda_\ell$ is in this partition set, then
\begin{enumerate}[label=(\roman*)]
\item $\lambda_i-\lambda_{i+3}\ge 3$;
\item $\lambda_i-\lambda_{i+2}\le 1$ implies $\lambda_i+\lambda_{i+1}+\lambda_{i+2}\equiv 2 \pmod{3}$.
\end{enumerate}

Let $\ps_{T_{\mathrm{IV}}}$ denote the partition set of type IV.

\begin{claim}
$\ps_{T_{\mathrm{IV}}}$ is a partition ideal of modulus $3$.
\end{claim}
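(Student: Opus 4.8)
The plan is to mimic the proof already given above for $\ps_{T_{\mathrm{I}}}$: I would first check that $\ps_{T_{\mathrm{IV}}}$ is a partition ideal, and then observe that modulus $3$ is immediate. For the partition-ideal part it suffices, exactly as before, to show that deleting a single part from any $\lambda = \lambda_1 + \lambda_2 + \cdots + \lambda_\ell \in \ps_{T_{\mathrm{IV}}}$ produces a partition $\tilde\lambda$ still lying in $\ps_{T_{\mathrm{IV}}}$, because an arbitrary $\pi \le \lambda$ is reached from $\lambda$ by repeated deletions. Deleting $\lambda_1$ or $\lambda_\ell$ is harmless, so I may assume an internal part $\lambda_k$ (with $1 < k < \ell$) is removed.

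To verify condition (i) for $\tilde\lambda$, note that any two entries of $\tilde\lambda$ at distance $3$ form a pair $\lambda_a, \lambda_b$ of entries of $\lambda$ with $b - a \ge 3$ (the gap is $3$ or $4$ according to whether $\lambda_k$ lay between them); since $\lambda$ is weakly decreasing, $\lambda_a - \lambda_b \ge \lambda_a - \lambda_{a+3} \ge 3$, so (i) survives. To verify condition (ii), observe that the only triples of consecutive entries of $\tilde\lambda$ that are not already consecutive triples of $\lambda$ are those straddling the gap at position $k$, namely, when they exist, the triple indexed by $j = k-2$, which is $(\lambda_{k-2}, \lambda_{k-1}, \lambda_{k+1})$, and the triple indexed by $j = k-1$, which is $(\lambda_{k-1}, \lambda_{k+1}, \lambda_{k+2})$. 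In both cases the two outermost entries, $\lambda_{k-2}$ and $\lambda_{k+1}$ in the first and $\lambda_{k-1}$ and $\lambda_{k+2}$ in the second, were at distance $3$ in $\lambda$ and hence differ by at least $3 > 1$; therefore the hypothesis ``parts at distance $2$ differ by at most $1$'' of (ii) is vacuous for these triples, and (ii) survives. Thus $\tilde\lambda \in \ps_{T_{\mathrm{IV}}}$.

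For the modulus, I would note that both defining conditions are invariant under adding $3$ to every part: the differences $\lambda_i - \lambda_{i+3}$ and $\lambda_i - \lambda_{i+2}$ are unaffected, while the triple sum $\lambda_i + \lambda_{i+1} + \lambda_{i+2}$ increases by $9 \equiv 0 \pmod 3$ and so keeps its residue. Consequently $\phi^3$ carries $\ps_{T_{\mathrm{IV}}}$ bijectively onto $\{\lambda \in \ps_{T_{\mathrm{IV}}} : \text{the smallest part of } \lambda > 3\} \cup \{\emptyset\}$, that is, $\phi^3 \ps_{T_{\mathrm{IV}}} = \ps_{T_{\mathrm{IV}}}^{(3)}$, which is exactly the assertion that $3$ is a modulus.

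The genuinely nontrivial point, and the only place where care is needed, is the case analysis for condition (ii): one must be sure that every consecutive triple of $\tilde\lambda$ altered by the deletion has its two end entries at distance $3$ in $\lambda$. Once the at-most-two offending triples are written down explicitly as above this is transparent, and the edge cases $k = 2$ or $k = \ell - 1$ merely delete some of these triples and are strictly easier. Everything else is bookkeeping of the same kind already carried out for types I, II and III.
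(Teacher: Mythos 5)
Your proof is correct, and it follows exactly the template the paper itself uses: the paper states this claim without proof, but its proof for $\ps_{T_{\mathrm{I}}}$ (deletion of a single part preserves both conditions, plus triviality of the modulus) is precisely the argument you have adapted, with the right adjustment from distance $2$ to distance $3$ and the correct observation that the only altered triples have endpoints at distance $3$ in $\lambda$, making condition (ii) vacuous for them.
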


\begin{claim}
The $L_{\ps_{T_{\mathrm{IV}}}}$ corresponding to modulus $3$ equals
\begin{align*}
\{&\emptyset,\;1,\;1+1,\;2,\;2+1,\;2+2,\;2+2+1,\;3,\;3+1,\\
&3+1+1,\;3+2,\;3+2+1,\;3+3,\;3+3+1,\;3+3+2\}.
\end{align*}
\end{claim}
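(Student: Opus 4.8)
The plan is a direct finite enumeration. By the Definition of $L_{\is,m}$, the set $L_{\ps_{T_{\mathrm{IV}}}}$ corresponding to the modulus $3$ is precisely the collection of $\lambda\in\ps_{T_{\mathrm{IV}}}$ all of whose parts are at most $3$; so it suffices to list the partitions with parts in $\{1,2,3\}$ and retain those obeying conditions (i) and (ii). The first reduction is to bound the number of parts: if every part lies in $\{1,2,3\}$ then any two parts differ by at most $2$, so the distance-$3$ inequality $\lambda_i-\lambda_{i+3}\ge 3$ can never hold for an index $i$ with $\lambda_{i+3}$ present, whence $\lambda$ has at most three parts. (Here I use the convention --- the same one that makes $1\in\mathscr{R}$ in the earlier example --- that the difference-at-a-distance conditions are imposed only on pairs of parts that genuinely occur, not on phantom $0$'s; with phantom zeros condition (i) would force $\lambda_1=3$ and spuriously eliminate $2+2+1$, which the statement does list.) This leaves exactly twenty candidates:
\[
\emptyset;\qquad 1,\ 2,\ 3;\qquad 1+1,\ 2+1,\ 2+2,\ 3+1,\ 3+2,\ 3+3;
\]
\[
1+1+1,\ 2+1+1,\ 2+2+1,\ 2+2+2,\ 3+1+1,\ 3+2+1,\ 3+2+2,\ 3+3+1,\ 3+3+2,\ 3+3+3.
\]

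Next I would test these against condition (ii). For a candidate with at most two parts there is no index $i$ with $\lambda_{i+2}$ present, so (ii) is vacuous and all ten such partitions belong to $L_{\ps_{T_{\mathrm{IV}}}}$. For a three-part candidate $\lambda_1+\lambda_2+\lambda_3$, only (ii) with $i=1$ can bite: it requires that $\lambda_1-\lambda_3\le 1$ imply $\lambda_1+\lambda_2+\lambda_3\equiv 2\pmod 3$. The three candidates with $\lambda_1-\lambda_3=2$, namely $3+1+1$, $3+2+1$ and $3+3+1$, satisfy (ii) vacuously and are therefore in $L_{\ps_{T_{\mathrm{IV}}}}$. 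For the remaining seven (those with $\lambda_1-\lambda_3\le 1$), a one-line computation of the part-sum modulo $3$ shows that $2+2+1$ (sum $5\equiv 2$) and $3+3+2$ (sum $8\equiv 2$) survive while $1+1+1$, $2+1+1$, $2+2+2$, $3+2+2$ and $3+3+3$ do not. Collecting the survivors yields exactly the fifteen partitions in the statement; in the write-up I would present the three-part part of the check as a short table recording, for each candidate, the value of $\lambda_1-\lambda_3$ and, when it is at most $1$, the residue of $\lambda_1+\lambda_2+\lambda_3$ modulo $3$.

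There is no serious obstacle here; the argument is a bounded case analysis. The only points that demand attention are fixing the boundary convention for the distance conditions correctly --- so that $2+2+1$ is kept rather than eliminated by a nonexistent fourth part --- and organizing the enumeration of three-part partitions so that none is omitted or counted twice. Both are routine once one observes that a partition with all parts at most $3$ has at most three parts.
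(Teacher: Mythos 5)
Your enumeration is correct: the bound of three parts follows exactly as you say, the twenty candidates are complete, and the case analysis of condition (ii) yields precisely the fifteen listed partitions (your reading of the boundary convention is confirmed by the paper's example $L_{\mathscr{R},2}=\{\emptyset,1,2\}$). The paper states this claim without proof as a routine observation, and your finite check is exactly the verification it leaves implicit.
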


\begin{claim}\label{claim:T4-3}
The span and linking set of partitions in $L_{\ps_{T_{\mathrm{IV}}}}$ are given as follows.
{\footnotesize\begin{equation*}
\begin{array}{lcp{0.05cm}l}
& \text{span} && \quad\quad\quad\quad\quad\quad\quad\quad\quad\quad\quad\quad\text{linking set}\\
\pi_0 = \emptyset & 1 && \{\pi_0,\; \pi_1,\; \pi_2,\; \pi_3,\; \pi_4,\; \pi_5,\; \pi_6,\; \pi_7,\; \pi_8,\; \pi_9,\; \pi_{10},\; \pi_{11},\; \pi_{12},\; \pi_{13},\; \pi_{14}\}\\
\pi_1 = 1 & 1 && \{\pi_0,\; \pi_1,\; \pi_2,\; \pi_3,\; \pi_4,\; \pi_5,\; \pi_6,\; \pi_7,\; \pi_8,\; \pi_9,\; \pi_{10},\; \pi_{11},\; \pi_{12},\; \pi_{13},\; \pi_{14}\}\\
\pi_2 = 1+1 & 1 && \{\pi_0,\; \pi_1,\; \pi_2,\; \pi_3,\; \pi_4,\; \pi_5,\; \pi_6,\; \pi_7,\; \pi_8,\; \pi_9,\; \pi_{10},\; \pi_{11},\; \pi_{12},\; \pi_{13},\; \pi_{14}\}\\
\pi_3 = 2 & 1 && \{\pi_0,\; \pi_1,\; \pi_2,\; \pi_3,\; \pi_4,\; \pi_5,\; \pi_6,\; \pi_7,\; \pi_8,\; \pi_9,\; \pi_{10},\; \pi_{11},\; \pi_{12},\; \pi_{13},\; \pi_{14}\}\\
\pi_4 = 2+1 & 1 && \{\pi_0,\; \pi_1,\; \pi_2,\; \pi_3,\; \pi_4,\; \pi_5,\; \pi_6,\; \pi_7,\; \pi_8,\; \pi_9,\; \pi_{10},\; \pi_{11},\; \pi_{12},\; \pi_{13},\; \pi_{14}\}\\
\pi_5 = 2+2 & 1 && \{\pi_0,\; \pi_1,\; \pi_3,\; \pi_4,\; \pi_5,\; \pi_6,\; \pi_7,\; \pi_8,\; \pi_{10},\; \pi_{11},\; \pi_{12},\; \pi_{13},\; \pi_{14}\}\\
\pi_6 = 2+2+1 & 1 && \{\pi_0,\; \pi_1,\; \pi_3,\; \pi_4,\; \pi_5,\; \pi_6,\; \pi_7,\; \pi_8,\; \pi_{10},\; \pi_{11},\; \pi_{12},\; \pi_{13},\; \pi_{14}\}\\
\pi_7 = 3 & 1 && \{\pi_0,\; \pi_1,\; \pi_2,\; \pi_3,\; \pi_4,\; \pi_5,\; \pi_7,\; \pi_8,\; \pi_9,\; \pi_{10},\; \pi_{11},\; \pi_{12},\; \pi_{13},\; \pi_{14}\}\\
\pi_8 = 3+1 & 1 && \{\pi_0,\; \pi_1,\; \pi_2,\; \pi_3,\; \pi_4,\; \pi_5,\; \pi_7,\; \pi_8,\; \pi_9,\; \pi_{10},\; \pi_{11},\; \pi_{12},\; \pi_{13},\; \pi_{14}\}\\
\pi_9 = 3+1+1 & 1 && \{\pi_0,\; \pi_1,\; \pi_2,\; \pi_3,\; \pi_4,\; \pi_5,\; \pi_7,\; \pi_8,\; \pi_9,\; \pi_{10},\; \pi_{11},\; \pi_{12},\; \pi_{13},\; \pi_{14}\}\\
\pi_{10} = 3+2 & 1 && \{\pi_0,\; \pi_1,\; \pi_3,\; \pi_4,\; \pi_5,\; \pi_7,\; \pi_8,\; \pi_{10},\; \pi_{11},\; \pi_{12},\; \pi_{13},\; \pi_{14}\}\\
\pi_{11} = 3+2+1 & 1 && \{\pi_0,\; \pi_1,\; \pi_3,\; \pi_4,\; \pi_5,\; \pi_7,\; \pi_8,\; \pi_{10},\; \pi_{11},\; \pi_{12},\; \pi_{13},\; \pi_{14}\}\\
\pi_{12} = 3+3 & 1 && \{\pi_0,\; \pi_3,\; \pi_7,\; \pi_{10},\; \pi_{12},\; \pi_{14}\}\\
\pi_{13} = 3+3+1 & 1 && \{\pi_0,\; \pi_3,\; \pi_7,\; \pi_{10},\; \pi_{12},\; \pi_{14}\}\\
\pi_{14} = 3+3+2 & 1 && \{\pi_0,\; \pi_3,\; \pi_7,\; \pi_{10},\; \pi_{12},\; \pi_{14}\}\\
\end{array}
\end{equation*}}
\end{claim}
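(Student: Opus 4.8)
The plan is to turn Claim~\ref{claim:T4-3} into a finite check, by combining the modulus-$3$ decomposition with the fact that the two defining conditions of $\ps_{T_{\mathrm{IV}}}$ are local. Since the preceding claims already record that $\ps_{T_{\mathrm{IV}}}$ is a partition ideal of modulus $3$ and that $L_{\ps_{T_{\mathrm{IV}}}}$ is the $15$-element set listed, I would start from the unique decomposition of \cite[Lemma 8.9]{And1976}, $\lambda=\lambda_{(1)}\oplus(\phi^{3}\lambda_{(2)})\oplus(\phi^{6}\lambda_{(3)})\oplus\cdots$, in which each $\lambda_{(k)}\in L_{\ps_{T_{\mathrm{IV}}}}$ and the parts of $\phi^{3(k-1)}\lambda_{(k)}$ all lie in the window $[3k-2,\,3k]$; in particular each block has at most three parts.

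The conceptual heart is a locality lemma. Any instance of condition~(i), $\lambda_j-\lambda_{j+3}\ge 3$, or of condition~(ii) whose (four, resp.\ three) parts are drawn from two blocks $\lambda_{(k)},\lambda_{(k')}$ with $|k-k'|\ge 2$ is automatically satisfied, because parts from such blocks differ in value by at least $4$: for~(i) the left-hand side is then $\ge 4$, and for~(ii) the hypothesis $\lambda_i-\lambda_{i+2}\le 1$ already fails. Instances internal to a single block are also automatic, since a block has at most three parts (so~(i) is vacuous) and~(ii) for a three-part block is precisely the defining condition of that block as an element of $L_{\ps_{T_{\mathrm{IV}}}}$ (the shift by a multiple of $3$ is harmless). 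Hence $\lambda\in\ps_{T_{\mathrm{IV}}}$ if and only if every $\lambda_{(k)}\in L_{\ps_{T_{\mathrm{IV}}}}$ and every consecutive pair $(\lambda_{(k)},\lambda_{(k+1)})$ is \emph{compatible}, i.e.\ satisfies the interface instances of~(i) and~(ii); moreover this compatibility depends only on the ordered pair, since which part of the upper block sits at index-distance $3$ from which part of the lower block (for~(i)) and which parts become consecutive (for~(ii)) are determined by the two blocks and their sizes alone.

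Granting this, for each $\pi\in L_{\ps_{T_{\mathrm{IV}}}}$ I would put $\mathcal{L}(\pi):=\{\sigma\in L_{\ps_{T_{\mathrm{IV}}}}:(\pi,\sigma)\text{ compatible}\}$ and $l(\pi):=1$, and check Definition~\ref{def:linked}(iii). For the ``only if'' direction, given $\lambda\in\ps_{T_{\mathrm{IV}}}$ with $\tail_3(\lambda)=\pi$, let $\tilde\pi$ be obtained from $\lambda$ by deleting its parts $\le 3$ and subtracting $3$ from the rest; by the partition-ideal and modulus properties $\tilde\pi\in\ps_{T_{\mathrm{IV}}}$, clearly $\lambda=\pi\oplus\phi^{3}\tilde\pi$, and $\tail_3(\tilde\pi)=\lambda_{(2)}$ is compatible with $\pi$, hence lies in $\mathcal{L}(\pi)$. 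For the ``if'' direction, if $\tilde\pi\in\ps_{T_{\mathrm{IV}}}$ has $\tail_3(\tilde\pi)=\sigma\in\mathcal{L}(\pi)$, then the blocks of $\pi\oplus\phi^{3}\tilde\pi$ are $\pi,\sigma$ followed by the blocks of $\tilde\pi$ above $\sigma$; every consecutive pair is compatible---$(\pi,\sigma)$ by the choice of $\sigma$, the rest because $\tilde\pi\in\ps_{T_{\mathrm{IV}}}$---so $\pi\oplus\phi^{3}\tilde\pi\in\ps_{T_{\mathrm{IV}}}$ with $3$-tail $\pi$. Minimality of $\mathcal{L}(\pi)$ is immediate: for each $\sigma\in\mathcal{L}(\pi)$ the partition $\pi\oplus\phi^{3}\sigma$ has $3$-tail $\pi$ and its only decomposition uses $\tilde\pi=\sigma$, so $\sigma$ cannot be dropped. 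Since $\pi_0=\emptyset$ is compatible with every block (its interface is empty), the characterization already holds with $l(\pi)=1$, so the span never needs to be enlarged.

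What remains is to compute the compatible pairs, and this is where essentially all the labour sits---bookkeeping rather than a genuine obstacle. One must pin down the compatibility predicate for every combination of block sizes: for adjacent blocks of sizes $p'$ (upper) and $p$ (lower), condition~(i) compares the $a$-th part of the upper block with the $(a+3-p')$-th part of the lower block for each admissible $a$, while condition~(ii) uses one or two parts from each side according to $p'$. With these index shifts fixed, running through the at most $15\times 15$ pairs---conveniently grouped by a block's largest part and by whether its smallest part equals $1$---and reading off $\mathcal{L}(\pi)$ reproduces exactly the table in the claim, in the same fashion as the verifications behind Claims~\ref{claim:T1-3}, \ref{claim:T2-3} and~\ref{claim:T3-3}.
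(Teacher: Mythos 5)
Your argument is correct: the locality lemma (conditions (i) and (ii) only ever constrain parts lying within a single block or across two adjacent blocks of the modulus-$3$ decomposition, and both conditions are invariant under the common shift by a multiple of $3$), combined with the minimality observation and the finite compatibility check on ordered pairs from $L_{\ps_{T_{\mathrm{IV}}}}$, is precisely the verification that the paper leaves implicit, since Claim~\ref{claim:T4-3} is stated there without proof as a ``straightforward'' consequence of the definition. I spot-checked several rows of the compatibility table produced by your index bookkeeping (e.g.\ the rows for $\pi_5$, $\pi_7$, $\pi_{12}$ and $\pi_{14}$) and they agree with the linking sets in the claim.
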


Let us denote by $H_i(x)=H_i(x,q)$ the generating function of partitions $\lambda$ in $\ps_{T_{\mathrm{IV}}}$ with $\tail_m(\lambda)=\pi_i$ for $i=0,1,\ldots, 14$ where the $\pi_i$'s are as defined in Claim \ref{claim:T4-3}.

Following \eqref{eq:important-gf-2}, we have
{\footnotesize\begin{align}
H_0(x)&=H_0(xq^3)+H_1(xq^3)+H_2(xq^3)+H_3(xq^3)+H_4(xq^3)+H_5(xq^3)\notag\\
&\quad+H_6(xq^3)+H_7(xq^3)+H_8(xq^3)+H_9(xq^3)+H_{10}(xq^3)\notag\\
&\quad+H_{11}(xq^3)+H_{12}(xq^3)+H_{13}(xq^3)+H_{14}(xq^3),\\
x^{-1}q^{-1}H_1(x)&=H_0(xq^3)+H_1(xq^3)+H_2(xq^3)+H_3(xq^3)+H_4(xq^3)+H_5(xq^3)\notag\\
&\quad+H_6(xq^3)+H_7(xq^3)+H_8(xq^3)+H_9(xq^3)+H_{10}(xq^3)\notag\\
&\quad+H_{11}(xq^3)+H_{12}(xq^3)+H_{13}(xq^3)+H_{14}(xq^3),\\
x^{-2}q^{-2}H_2(x)&=H_0(xq^3)+H_1(xq^3)+H_2(xq^3)+H_3(xq^3)+H_4(xq^3)+H_5(xq^3)\notag\\
&\quad+H_6(xq^3)+H_7(xq^3)+H_8(xq^3)+H_9(xq^3)+H_{10}(xq^3)\notag\\
&\quad+H_{11}(xq^3)+H_{12}(xq^3)+H_{13}(xq^3)+H_{14}(xq^3),\\
x^{-1}q^{-2}H_3(x)&=H_0(xq^3)+H_1(xq^3)+H_2(xq^3)+H_3(xq^3)+H_4(xq^3)+H_5(xq^3)\notag\\
&\quad+H_6(xq^3)+H_7(xq^3)+H_8(xq^3)+H_9(xq^3)+H_{10}(xq^3)\notag\\
&\quad+H_{11}(xq^3)+H_{12}(xq^3)+H_{13}(xq^3)+H_{14}(xq^3),\\
x^{-2}q^{-3}H_4(x)&=H_0(xq^3)+H_1(xq^3)+H_2(xq^3)+H_3(xq^3)+H_4(xq^3)+H_5(xq^3)\notag\\
&\quad+H_6(xq^3)+H_7(xq^3)+H_8(xq^3)+H_9(xq^3)+H_{10}(xq^3)\notag\\
&\quad+H_{11}(xq^3)+H_{12}(xq^3)+H_{13}(xq^3)+H_{14}(xq^3),\\
x^{-2}q^{-4}H_5(x)&=H_0(xq^3)+H_1(xq^3)+H_3(xq^3)+H_4(xq^3)+H_5(xq^3)+H_6(xq^3)\notag\\
&\quad+H_7(xq^3)+H_8(xq^3)+H_{10}(xq^3)+H_{11}(xq^3)+H_{12}(xq^3)\notag\\
&\quad+H_{13}(xq^3)+H_{14}(xq^3),\\
x^{-3}q^{-5}H_6(x)&=H_0(xq^3)+H_1(xq^3)+H_3(xq^3)+H_4(xq^3)+H_5(xq^3)+H_6(xq^3)\notag\\
&\quad+H_7(xq^3)+H_8(xq^3)+H_{10}(xq^3)+H_{11}(xq^3)+H_{12}(xq^3)\notag\\
&\quad+H_{13}(xq^3)+H_{14}(xq^3),\\
x^{-1}q^{-3}H_7(x)&=H_0(xq^3)+H_1(xq^3)+H_2(xq^3)+H_3(xq^3)+H_4(xq^3)+H_5(xq^3)\notag\\
&\quad+H_7(xq^3)+H_8(xq^3)+H_9(xq^3)+H_{10}(xq^3)+H_{11}(xq^3)\notag\\
&\quad+H_{12}(xq^3)+H_{13}(xq^3)+H_{14}(xq^3),\\
x^{-2}q^{-4}H_8(x)&=H_0(xq^3)+H_1(xq^3)+H_2(xq^3)+H_3(xq^3)+H_4(xq^3)+H_5(xq^3)\notag\\
&\quad+H_7(xq^3)+H_8(xq^3)+H_9(xq^3)+H_{10}(xq^3)+H_{11}(xq^3)\notag\\
&\quad+H_{12}(xq^3)+H_{13}(xq^3)+H_{14}(xq^3),\\
x^{-3}q^{-5}H_9(x)&=H_0(xq^3)+H_1(xq^3)+H_2(xq^3)+H_3(xq^3)+H_4(xq^3)+H_5(xq^3)\notag\\
&\quad+H_7(xq^3)+H_8(xq^3)+H_9(xq^3)+H_{10}(xq^3)+H_{11}(xq^3)\notag\\
&\quad+H_{12}(xq^3)+H_{13}(xq^3)+H_{14}(xq^3),\\
x^{-2}q^{-5}H_{10}(x)&=H_0(xq^3)+H_1(xq^3)+H_3(xq^3)+H_4(xq^3)+H_5(xq^3)+H_7(xq^3)\notag\\
&\quad+H_8(xq^3)+H_{10}(xq^3)+H_{11}(xq^3)+H_{12}(xq^3)+H_{13}(xq^3)\notag\\
&\quad+H_{14}(xq^3),\\
x^{-3}q^{-6}H_{11}(x)&=H_0(xq^3)+H_1(xq^3)+H_3(xq^3)+H_4(xq^3)+H_5(xq^3)+H_7(xq^3)\notag\\
&\quad+H_8(xq^3)+H_{10}(xq^3)+H_{11}(xq^3)+H_{12}(xq^3)+H_{13}(xq^3)\notag\\
&\quad+H_{14}(xq^3),\\
x^{-2}q^{-6}H_{12}(x)&=H_0(xq^3)+H_3(xq^3)+H_7(xq^3)+H_{10}(xq^3)+H_{12}(xq^3)+H_{14}(xq^3),\\
x^{-3}q^{-7}H_{13}(x)&=H_0(xq^3)+H_3(xq^3)+H_7(xq^3)+H_{10}(xq^3)+H_{12}(xq^3)+H_{14}(xq^3),\\
x^{-3}q^{-8}H_{14}(x)&=H_0(xq^3)+H_3(xq^3)+H_7(xq^3)+H_{10}(xq^3)+H_{12}(xq^3)+H_{14}(xq^3).
\end{align}}

This system may be simplified as
\begin{framed}
{\begin{align}
H_0(x)&=(1+xq^4+x^2q^8+xq^5+x^2q^9)H_0(xq^3)+(1+xq^4)H_5(xq^3)\notag\\
&\quad+(1+xq^4+x^2q^8)H_7(xq^3)+(1+xq^4)H_{10}(xq^3)\notag\\
&\quad+(1+xq^4+xq^5)H_{12}(xq^3),\\
H_5(x)&=(x^2q^4+x^3q^8+x^3q^9+x^4q^{13})H_0(xq^3)+(x^2q^4+x^3q^8)H_5(xq^3)\notag\\
&\quad+(x^2q^4+x^3q^8)H_7(xq^3)+(x^2q^4+x^3q^8)H_{10}(xq^3)\notag\\
&\quad+(x^2q^4+x^3q^8+x^3q^9)H_{12}(xq^3),\\
H_7(x)&=(xq^3+x^2q^7+x^3q^{11}+x^2q^8+x^3q^{12})H_0(xq^3)+xq^3H_5(xq^3)\notag\\
&\quad+(xq^3+x^2q^7+x^3q^{11})H_7(xq^3)+(xq^3+x^2q^7)H_{10}(xq^3)\notag\\
&\quad+(xq^3+x^2q^7+x^2q^8)H_{12}(xq^3),\\
H_{10}(x)&=(x^2q^5+x^3q^9+x^3q^{10}+x^4q^{14})H_0(xq^3)+x^2q^5H_5(xq^3)\notag\\
&\quad+(x^2q^5+x^3q^9)H_7(xq^3)+(x^2q^5+x^3q^9)H_{10}(xq^3)\notag\\
&\quad+(x^2q^5+x^3q^9+x^3q^{10})H_{12}(xq^3),\\
H_{12}(x)&=(x^2q^6+x^3q^{11})H_0(xq^3)+x^2q^6H_7(xq^3)+x^2q^6H_{10}(xq^3)\notag\\
&\quad+(x^2q^6+x^3q^{11})H_{12}(xq^3).
\end{align}}
\end{framed}

Let $G_{\ps_{T_{\mathrm{IV},1}}}(x)=G_{\ps_{T_{\mathrm{II},1}}}(x,q)$ denote the generating function of partitions in $\ps_{T_{\mathrm{IV}}}$ whose smallest part is at least $1$.

Let $G_{\ps_{T_{\mathrm{IV},a}}}(x)$ denote the generating function of partitions in $\ps_{T_{\mathrm{IV}}}$ where $1$ appears at most once.

Let $G_{\ps_{T_{\mathrm{IV},b}}}(x)$ denote the generating function of partitions in $\ps_{T_{\mathrm{IV}}}$ where the smallest part is at least $2$ with $2$ appearing at most once.

It follows that
\begin{align}
G_{\ps_{T_{\mathrm{IV},1}}}(x)&=H_0(x)+H_1(x)+H_2(x)+H_3(x)+H_4(x)+H_5(x)\notag\\
&\quad+H_6(x)+H_7(x)+H_8(x)+H_9(x)+H_{10}(x)\notag\\
&\quad+H_{11}(x)+H_{12}(x)+H_{13}(x)+H_{14}(x)\notag\\
&=H_0(xq^{-3}),\\
G_{\ps_{T_{\mathrm{IV},a}}}(x)&=H_0(x)+H_1(x)+H_3(x)+H_4(x)+H_5(x)+H_6(x)\notag\\
&\quad+H_7(x)+H_8(x)+H_{10}(x)+H_{11}(x)+H_{12}(x)\notag\\
&\quad+H_{13}(x)+H_{14}(x)\notag\\
&=x^{-2}q^2H_5(xq^{-3}),\\
G_{\ps_{T_{\mathrm{IV},b}}}(x)&=H_0(x)+H_3(x)+H_7(x)+H_{10}(x)+H_{12}(x)+H_{14}(x)\notag\\
&=x^{-2}H_{12}(xq^{-3}).
\end{align}

Analogously, we can use the algorithm in Section \ref{sec:q-diff} to deduce the following $q$-difference equations for $G_{\ps_{T_{\mathrm{IV},1}}}(x)$, $G_{\ps_{T_{\mathrm{IV},a}}}(x)$ and $G_{\ps_{T_{\mathrm{IV},b}}}(x)$, respectively.

\begin{theorem}
It holds that
\begin{align}
p_0(x,q)G_{\ps_{T_{\mathrm{IV},1}}}(x)&+p_3(x,q)G_{\ps_{T_{\mathrm{IV},1}}}(xq^3)+p_6(x,q)G_{\ps_{T_{\mathrm{IV},1}}}(x q^6)\nonumber\\
&+p_9(x,q)G_{\ps_{T_{\mathrm{IV},1}}}(xq^9)+p_{12}(x,q)G_{\ps_{T_{\mathrm{IV},1}}}(xq^{12})=0,
\end{align}
where
{\footnotesize\begin{align*}
p_0(x,q)&=1+x(q^{4}+q^{5}+q^{7}+q^{8}+q^{10}+q^{11})\\
&\quad+ x^2(q^{9}+q^{11}+2 q^{12}+q^{14}+2 q^{15}+q^{16}+2 q^{18}+q^{19}+q^{21})\\
&\quad+ x^3(q^{16}+q^{19}+q^{20}+q^{22}+q^{23}+q^{25}+q^{26}+q^{29}),\\
p_3(x,q)&=-1-x(q+q^{2}+q^{3}+q^{4}+q^{5}+q^{7}+q^{8}+q^{10}+q^{11})\\
&\quad
-x^2(q^{2}+q^{3}+2 q^{4}+2 q^{5}+3 q^{6}+2 q^{7}+2 q^{8}+3 q^{9}+2 q^{10}+3 q^{11}+4 q^{12}+2 q^{13}\\
&\quad\quad\quad\quad+2 q^{14}+2 q^{15}+q^{16}+2 q^{18}+q^{19}+q^{21})\\
&\quad
-  x^3(2 q^{5}+2 q^{6}+3 q^{7}+4 q^{8}+4 q^{9}+4 q^{10}+5 q^{11}+6 q^{12}+7 q^{13}+7 q^{14}+6 q^{15}\\
&\quad\quad\quad\quad+6 q^{16}+5 q^{17}+3 q^{18}+4 q^{19}+4 q^{20}+3 q^{21}+3 q^{22}+2 q^{23}+q^{24}+q^{25}\\
&\quad\quad\quad\quad+q^{26}+q^{29})\\
&\quad
-x^4(2 q^{9}+3 q^{10}+2 q^{11}+4 q^{12}+6 q^{13}+6 q^{14}+8 q^{15}+10 q^{16}+8 q^{17}+9 q^{18}+9 q^{19}\\
&\quad\quad\quad\quad+7 q^{20}+8 q^{21}+8 q^{22}+7 q^{23}+6 q^{24}+5 q^{25}+3 q^{26}+3 q^{27}+2 q^{28}+q^{29}\\
&\quad\quad\quad\quad+q^{30}+q^{31}+q^{32})\\
&\quad
-x^5(q^{14}+q^{15}+2 q^{16}+5 q^{17}+3 q^{18}+4 q^{19}+8 q^{20}+6 q^{21}+5 q^{22}+9 q^{23}+8 q^{24}\\
&\quad\quad\quad\quad+6 q^{25}+9 q^{26}+6 q^{27}+4 q^{28}+6 q^{29}+4 q^{30}+2 q^{31}+3 q^{32}+2 q^{33}+q^{34}\\
&\quad\quad\quad\quad+q^{35})\\
&\quad
- x^6(q^{21}+q^{22}+2 q^{24}+2 q^{25}+q^{26}+3 q^{27}+3 q^{28}+q^{29}+3 q^{30}+4 q^{31}+q^{32}+2 q^{33}\\
&\quad\quad\quad\quad+3 q^{34}+q^{35}+q^{36}+2 q^{37}+q^{40}),\\
p_6(x,q)&=x^4(q^{12}+q^{14}+q^{16}+q^{17}+q^{19}+q^{21})\\
&\quad
+x^5(q^{13}+q^{15}+2 q^{16}+2 q^{17}+3 q^{18}+3 q^{19}+4 q^{20}+4 q^{21}+4 q^{22}+4 q^{23}+4 q^{24}\\
&\quad\quad\quad\quad+4 q^{25}+3 q^{26}+3 q^{27}+2 q^{28}+2 q^{29}+q^{30}+q^{32})\\
&\quad
+x^6(q^{17}+q^{18}+2 q^{19}+3 q^{20}+4 q^{21}+7 q^{22}+7 q^{23}+8 q^{24}+8 q^{25}+11 q^{26}+11 q^{27}\\
&\quad\quad\quad\quad+12 q^{28}+12 q^{29}+11 q^{30}+11 q^{31}+8 q^{32}+8 q^{33}+7 q^{34}+7 q^{35}+4 q^{36}\\
&\quad\quad\quad\quad+3 q^{37}+2 q^{38}+q^{39}+q^{40})\\
&\quad
+x^7(2 q^{23}+3 q^{24}+3 q^{25}+4 q^{26}+7 q^{27}+9 q^{28}+10 q^{29}+14 q^{30}+14 q^{31}+16 q^{32}\\
&\quad\quad\quad\quad+17 q^{33}+15 q^{34}+15 q^{35}+17 q^{36}+16 q^{37}+14 q^{38}+14 q^{39}+10 q^{40}+9 q^{41}\\
&\quad\quad\quad\quad+7 q^{42}+4 q^{43}+3 q^{44}+3 q^{45}+2 q^{46})\\
&\quad
+x^8(q^{29}+q^{30}+3 q^{31}+4 q^{32}+4 q^{33}+8 q^{34}+8 q^{35}+7 q^{36}+11 q^{37}+13 q^{38}+10 q^{39}\\
&\quad\quad\quad\quad+14 q^{40}+14 q^{41}+10 q^{42}+13 q^{43}+11 q^{44}+7 q^{45}+8 q^{46}+8 q^{47}+4 q^{48}\\
&\quad\quad\quad\quad+4 q^{49}+3 q^{50}+q^{51}+q^{52})\\
&\quad
+x^9(q^{36}+q^{38}+2 q^{39}+q^{40}+2 q^{41}+4 q^{42}+2 q^{43}+3 q^{44}+5 q^{45}+3 q^{46}+3 q^{47}\\
&\quad\quad\quad\quad+5 q^{48}+3 q^{49}+2 q^{50}+4 q^{51}+2 q^{52}+q^{53}+2 q^{54}+q^{55}+q^{57}),\\
p_{9}(x,q)&=-x^6q^{30}
-x^7(q^{31}+q^{32}+q^{34}+q^{35}+q^{37}+q^{38}+q^{39}+q^{40}+q^{41})\\
&\quad
-x^8(q^{33}+q^{35}+2 q^{36}+q^{38}+2 q^{39}+2 q^{40}+2 q^{41}+4 q^{42}+3 q^{43}+2 q^{44}+3 q^{45}\\
&\quad\quad\quad\quad+2 q^{46}+2 q^{47}+3 q^{48}+2 q^{49}+2 q^{50}+q^{51}+q^{52})\\
&\quad
-x^9(q^{37}+q^{40}+q^{41}+q^{42}+2 q^{43}+3 q^{44}+3 q^{45}+4 q^{46}+4 q^{47}+3 q^{48}+5 q^{49}\\
&\quad\quad\quad\quad+6 q^{50}+6 q^{51}+7 q^{52}+7 q^{53}+6 q^{54}+5 q^{55}+4 q^{56}+4 q^{57}+4 q^{58}+3 q^{59}\\
&\quad\quad\quad\quad+2 q^{60}+2 q^{61})\\
&\quad
-x^{10}(q^{46}+q^{47}+q^{48}+q^{49}+2 q^{50}+3 q^{51}+3 q^{52}+5 q^{53}+6 q^{54}+7 q^{55}+8 q^{56}\\
&\quad\quad\quad\quad+8 q^{57}+7 q^{58}+9 q^{59}+9 q^{60}+8 q^{61}+10 q^{62}+8 q^{63}+6 q^{64}+6 q^{65}+4 q^{66}\\
&\quad\quad\quad\quad+2 q^{67}+3 q^{68}+2 q^{69})\\
&\quad
-x^{11}(q^{55}+q^{56}+2 q^{57}+3 q^{58}+2 q^{59}+4 q^{60}+6 q^{61}+4 q^{62}+6 q^{63}+9 q^{64}+6 q^{65}\\
&\quad\quad\quad\quad+8 q^{66}+9 q^{67}+5 q^{68}+6 q^{69}+8 q^{70}+4 q^{71}+3 q^{72}+5 q^{73}+2 q^{74}+q^{75}+q^{76})\\
&\quad
-x^{12}(q^{62}+2 q^{65}+q^{66}+q^{67}+3 q^{68}+2 q^{69}+q^{70}+4 q^{71}+3 q^{72}+q^{73}+3 q^{74}+3 q^{75}\\
&\quad\quad\quad\quad+q^{76}+2 q^{77}+2 q^{78}+q^{80}+q^{81}),\\
\intertext{and}
p_{12}(x,q)&=x^{12}q^{93} + x^{13}(q^{94}+q^{95}+q^{97}+q^{98}+q^{100}+q^{101})\\
&\quad+ x^{14}(q^{96}+q^{98}+2 q^{99}+q^{101}+2 q^{102}+q^{103}+2 q^{105}+q^{106}+q^{108})\\
&\quad+x^{15}(q^{100}+q^{103}+q^{104}+q^{106}+q^{107}+q^{109}+q^{110}+q^{113}).
\end{align*}}
\end{theorem}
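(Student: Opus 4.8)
The argument runs exactly parallel to the treatment of the type~I set earlier in this section. One starts from the simplified system in the framed box above, which is a $5\times5$ system of $q$-difference equations of the shape \eqref{eq:system-gen} (with $m=3$) in the five unknowns $H_0,H_5,H_7,H_{10},H_{12}$, and feeds it into the elimination algorithm of Section~\ref{sec:q-diff}. Following Step~(1), put $u_0(x)=H_0(x)$, so that the system becomes $\mathbf u(x)=\tP_1\,\mathbf u(xq^3)$ with $\mathbf u=(u_0,H_5,H_7,H_{10},H_{12})$ and $\tP_1$ the $5\times5$ matrix read off from the framed equations. Then I would run Steps~$(2),(3),\dots$: at Step~$(s)$ introduce the new unknown $u_{s}$ as prescribed there, replace $\tP_{s-1}$ by $\tP_s=T(xq^{-3})\,\tP_{s-1}\,T(x)^{-1}$, and invoke Claim~\ref{claim:q-diff} to see that $\tP_s$ has the required near-companion shape. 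The program terminates after at most five steps, and the explicit computation shows that (after cancelling common factors) the resulting scalar recurrence has order four, leaving a reduced system
\begin{align*}
u_0(x)&=r_{0,0}(x)\,u_0(xq^3)+u_1(xq^3),\\
u_1(x)&=r_{1,0}(x)\,u_0(xq^3)+r_{1,1}(x)\,u_1(xq^3)+u_2(xq^3),\\
&\;\;\vdots\\
u_{\ell-1}(x)&=r_{\ell-1,0}(x)\,u_0(xq^3)+\cdots+r_{\ell-1,\ell-1}(x)\,u_{\ell-1}(xq^3),
\end{align*}
whose coefficients $r_{i,j}$ are explicit rational functions of $x$ and $q$.

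\textbf{Elimination.} From this reduced system I would eliminate $u_1,\dots,u_{\ell-1}$ one after another, in exactly the way \eqref{eq:u5-eli}--\eqref{eq:u6-eli} were deduced from \eqref{eq:T1NewNewNew0}--\eqref{eq:T1NewNewNew6}: the first equation expresses $u_1(x)$ through $u_0(x)$ and $u_0(xq^3)$; substituting this into the second expresses $u_2(x)$ through $u_0(xq^{-3}),u_0(x),u_0(xq^3)$; and so on, so that feeding everything into the last equation collapses it to a scalar relation among the values $u_0(xq^{3j})$ for $-\ell\le j\le 0$. Clearing denominators — and cancelling any common polynomial factor, so as to land on the minimal relation — and then replacing $x$ by $xq^{-3}$, which by the identity $G_{\ps_{T_{\mathrm{IV},1}}}(x)=H_0(xq^{-3})=u_0(xq^{-3})$ established above sends $u_0(xq^{-3})$ to $G_{\ps_{T_{\mathrm{IV},1}}}(x)$, produces precisely the four-term $q$-difference equation of the statement, with the stated polynomials $p_0,p_3,p_6,p_9,p_{12}$.

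\textbf{Main obstacle.} Conceptually nothing new is required; the genuine difficulty is the size of the symbolic computation. The intermediate matrices $\tP_2,\tP_3,\dots$ and the functions $r_{i,j}$ have numerators and denominators of rapidly growing degree, and the final denominator-clearing step yields the long polynomials displayed above, so the computation is carried out in \textit{Mathematica}. What must be checked is that the algorithm never stalls on a vanishing pivot, i.e., that each entry $\tp_{s-1,s}(x)$ used to define $u_s$ and to invert $T(x)$ is not identically zero — this is visible from the lowest-order terms of the framed system — and that the simplification returns exactly the coefficients listed. As an independent check, one may expand the $q$-series of the solution of the recurrence normalized by $G_{\ps_{T_{\mathrm{IV},1}}}(0)=1$ and match it against a direct enumeration of the type~IV partitions, as was done implicitly for the earlier types.
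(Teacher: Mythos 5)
Your proposal matches the paper's own (largely implicit) proof: the paper likewise takes the framed $5\times5$ system in $H_0,H_5,H_7,H_{10},H_{12}$, runs the matrix elimination algorithm of Section~\ref{sec:q-diff} exactly as in the worked type~I case, and uses $G_{\ps_{T_{\mathrm{IV},1}}}(x)=H_0(xq^{-3})$ together with a \textit{Mathematica} implementation to produce the stated polynomials. The only point worth flagging is that you should confirm (as your own check on pivots and cancellations anticipates) why the $5\times5$ system collapses to an order-four rather than order-five recurrence, but this is a computational observation rather than a gap in the method.
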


\begin{theorem}
It holds that
\begin{align}
p_0(x,q)G_{\ps_{T_{\mathrm{IV},a}}}(x)&+p_3(x,q)G_{\ps_{T_{\mathrm{IV},a}}}(xq^3)+p_6(x,q)G_{\ps_{T_{\mathrm{IV},a}}}(x q^6)\nonumber\\
&+p_9(x,q)G_{\ps_{T_{\mathrm{IV},a}}}(xq^9)+p_{12}(x,q)G_{\ps_{T_{\mathrm{IV},a}}}(xq^{12})=0,
\end{align}
where
{\footnotesize\begin{align*}
p_0(x,q)&=1+x(q^{4}+q^{5}+q^{6}+q^{7}+q^{8}+q^{9})\\
&\quad+ x^2(q^{9}+q^{10}+q^{11}+2 q^{12}+2 q^{13}+2 q^{14}+q^{15}+q^{16}+q^{17})\\
&\quad+ x^3(q^{16}+q^{17}+q^{18}+q^{19}+q^{20}+q^{21}+q^{22}+q^{23}),\\
p_3(x,q)&=-1-x(q+q^{2}+q^{3}+q^{4}+q^{5}+q^{6}+q^{7}+q^{8}+q^{9})\\
&\quad
-x^2(q^{3}+2 q^{4}+2 q^{5}+3 q^{6}+3 q^{7}+4 q^{8}+4 q^{9}+4 q^{10}+3 q^{11}+3 q^{12}+2 q^{13}+2 q^{14}\\
&\quad\quad\quad\quad+q^{15}+q^{16}+q^{17})\\
&\quad
-  x^3(q^{5}+q^{6}+2 q^{7}+4 q^{8}+5 q^{9}+6 q^{10}+8 q^{11}+8 q^{12}+8 q^{13}+8 q^{14}+8 q^{15}+7 q^{16}\\
&\quad\quad\quad\quad+6 q^{17}+4 q^{18}+3 q^{19}+2 q^{20}+q^{21}+q^{22}+q^{23})\\
&\quad
-x^4(q^{9}+2 q^{10}+3 q^{11}+5 q^{12}+7 q^{13}+8 q^{14}+9 q^{15}+11 q^{16}+13 q^{17}+13 q^{18}\\
&\quad\quad\quad\quad+11 q^{19}+10 q^{20}+8 q^{21}+6 q^{22}+5 q^{23}+4 q^{24}+3 q^{25}+q^{26})\\
&\quad
-x^5(q^{14}+2 q^{15}+2 q^{16}+4 q^{17}+6 q^{18}+8 q^{19}+10 q^{20}+10 q^{21}+10 q^{22}+9 q^{23}\\
&\quad\quad\quad\quad+8 q^{24}+8 q^{25}+7 q^{26}+5 q^{27}+3 q^{28}+q^{29}+q^{30}+q^{31})\\
&\quad
- x^6(q^{21}+2 q^{22}+2 q^{23}+3 q^{24}+3 q^{25}+3 q^{26}+4 q^{27}+4 q^{28}+3 q^{29}+2 q^{30}+2 q^{31}\\
&\quad\quad\quad\quad+q^{32}+q^{33}+q^{34}),\\
p_6(x,q)&=x^4(q^{16}+q^{17}+q^{18}+q^{19}+q^{20}+q^{21})\\
&\quad
+x^5(q^{17}+2 q^{18}+2 q^{19}+3 q^{20}+5 q^{21}+6 q^{22}+5 q^{23}+5 q^{24}+6 q^{25}+5 q^{26}+3 q^{27}\\
&\quad\quad\quad\quad+2 q^{28}+2 q^{29}+q^{30})\\
&\quad
+x^6(q^{20}+2 q^{21}+5 q^{22}+7 q^{23}+8 q^{24}+10 q^{25}+13 q^{26}+14 q^{27}+15 q^{28}+15 q^{29}\\
&\quad\quad\quad\quad+14 q^{30}+13 q^{31}+10 q^{32}+8 q^{33}+7 q^{34}+5 q^{35}+2 q^{36}+q^{37})\\
&\quad
+x^7(q^{24}+2 q^{25}+4 q^{26}+7 q^{27}+10 q^{28}+13 q^{29}+16 q^{30}+19 q^{31}+21 q^{32}+21 q^{33}\\
&\quad\quad\quad\quad+21 q^{34}+21 q^{35}+19 q^{36}+16 q^{37}+13 q^{38}+10 q^{39}+7 q^{40}+4 q^{41}+2 q^{42}\\
&\quad\quad\quad\quad+q^{43})\\
&\quad
+x^8(q^{29}+q^{30}+2 q^{31}+5 q^{32}+7 q^{33}+10 q^{34}+12 q^{35}+14 q^{36}+16 q^{37}+16 q^{38}\\
&\quad\quad\quad\quad+16 q^{39}+16 q^{40}+14 q^{41}+12 q^{42}+10 q^{43}+7 q^{44}+5 q^{45}+2 q^{46}+q^{47}\\
&\quad\quad\quad\quad+q^{48})\\
&\quad
+x^9(q^{36}+q^{37}+2 q^{38}+3 q^{39}+3 q^{40}+4 q^{41}+5 q^{42}+5 q^{43}+5 q^{44}+5 q^{45}+4 q^{46}\\
&\quad\quad\quad\quad+3 q^{47}+3 q^{48}+2 q^{49}+q^{50}+q^{51}),\\
p_{9}(x,q)&=-x^6q^{36}
-x^7(q^{37}+q^{38}+q^{39}+q^{40}+q^{41}+q^{42}+q^{43}+q^{44}+q^{45})\\
&\quad
-x^8(q^{39}+q^{40}+q^{41}+2 q^{42}+2 q^{43}+3 q^{44}+3 q^{45}+4 q^{46}+4 q^{47}+4 q^{48}+3 q^{49}\\
&\quad\quad\quad\quad+3 q^{50}+2 q^{51}+2 q^{52}+q^{53})\\
&\quad
-x^9(q^{43}+q^{44}+q^{45}+2 q^{46}+3 q^{47}+4 q^{48}+6 q^{49}+7 q^{50}+8 q^{51}+8 q^{52}+8 q^{53}\\
&\quad\quad\quad\quad+8 q^{54}+8 q^{55}+6 q^{56}+5 q^{57}+4 q^{58}+2 q^{59}+q^{60}+q^{61})\\
&\quad
-x^{10}(q^{50}+3 q^{51}+4 q^{52}+5 q^{53}+6 q^{54}+8 q^{55}+10 q^{56}+11 q^{57}+13 q^{58}+13 q^{59}\\
&\quad\quad\quad\quad+11 q^{60}+9 q^{61}+8 q^{62}+7 q^{63}+5 q^{64}+3 q^{65}+2 q^{66}+q^{67})\\
&\quad
-x^{11}(q^{55}+q^{56}+q^{57}+3 q^{58}+5 q^{59}+7 q^{60}+8 q^{61}+8 q^{62}+9 q^{63}+10 q^{64}+10 q^{65}\\
&\quad\quad\quad\quad+10 q^{66}+8 q^{67}+6 q^{68}+4 q^{69}+2 q^{70}+2 q^{71}+q^{72})\\
&\quad
-x^{12}(q^{62}+q^{63}+q^{64}+2 q^{65}+2 q^{66}+3 q^{67}+4 q^{68}+4 q^{69}+3 q^{70}+3 q^{71}+3 q^{72}\\
&\quad\quad\quad\quad+2 q^{73}+2 q^{74}+q^{75}),\\
\intertext{and}
p_{12}(x,q)&=x^{12}q^{93} + x^{13}(q^{94}+q^{95}+q^{96}+q^{97}+q^{98}+q^{99})\\
&\quad+ x^{14}(q^{96}+q^{97}+q^{98}+2 q^{99}+2 q^{100}+2 q^{101}+q^{102}+q^{103}+q^{104})\\
&\quad+x^{15}(q^{100}+q^{101}+q^{102}+q^{103}+q^{104}+q^{105}+q^{106}+q^{107}).
\end{align*}}
\end{theorem}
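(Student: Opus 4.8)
The plan is to repeat, for the partition ideal $\ps_{T_{\mathrm{IV}}}$, the elimination scheme already carried out for $\ps_{T_{\mathrm{I}}}$ in this section. The starting point is the framed $5\times 5$ system of $q$-difference equations in $H_0,H_5,H_7,H_{10},H_{12}$ displayed above; this system is extracted from \eqref{eq:important-gf-2} and Claim \ref{claim:T4-3} after observing that each of the ten remaining functions is a monomial multiple of one of these five (for instance $H_1=xqH_0$, $H_2=x^2q^2H_0$, $H_6=xqH_5$, $H_8=xqH_7$, $H_9=x^2q^2H_7$, $H_{11}=xqH_{10}$, $H_{13}=xqH_{12}$, $H_{14}=xq^2H_{12}$), a consequence of the coincidences among the linking sets in Claim \ref{claim:T4-3}. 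I also record the identity $G_{\ps_{T_{\mathrm{IV},a}}}(x)=x^{-2}q^2H_5(xq^{-3})$ established above, equivalently $H_5(xq^{3j})=x^2q^{6j+4}\,G_{\ps_{T_{\mathrm{IV},a}}}(xq^{3j+3})$ for all $j$; thus it suffices to produce a $q$-difference equation satisfied by $H_5$.

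The main step is to apply the algorithm of Section \ref{sec:q-diff} to this $5\times5$ system, with $H_5$ playing the role of $F_1$. The algorithm terminates after at most five steps, putting the system into the staircase form of the ``Final setup'' there (with $u_1=H_5$), and then successive eliminations produce a single $q$-difference equation for $H_5$ of order at most $5$. A point worth isolating is that the order is in fact $4$ — which is why the final equation is a five-term relation (with shifts $x,xq^3,xq^6,xq^9,xq^{12}$): inspection of the framed system, or, combinatorially, the inclusion $\mathcal{L}_{\ps_{T_{\mathrm{IV}}}}(\pi_5)=\mathcal{L}_{\ps_{T_{\mathrm{IV}}}}(\pi_{10})\cup\{\pi_6\}$ together with $\sharp(\pi_{10})=\sharp(\pi_5)$, $|\pi_{10}|=|\pi_5|+1$ and $H_6=xqH_5$, yields the linear relation
\[
H_{10}(x)=q\,H_5(x)-x^3q^9H_5(xq^3),
\]
so $H_{10}$ is redundant and the recurrence produced by the algorithm, after clearing common factors, drops to order $4$.

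Starting from the resulting order-$4$ recurrence $\sum_{j=0}^{4}\rho_j(x,q)\,H_5(xq^{3j})=0$, I substitute $H_5(xq^{3j})=x^2q^{6j+4}\,G_{\ps_{T_{\mathrm{IV},a}}}(xq^{3j+3})$, cancel the common factor $x^2q^4$, replace $x$ by $xq^{-3}$ to re-index the shifts, clear denominators, and normalise so that the coefficient of $G_{\ps_{T_{\mathrm{IV},a}}}(x)$ reduces to $1$ at $x=0$. Matching the outcome with the claimed $p_0,p_3,p_6,p_9,p_{12}$ finishes the proof. As an independent safeguard, one expands $G_{\ps_{T_{\mathrm{IV},a}}}(x)$ as a power series in $q$ from its combinatorial definition and checks that the stated identity holds to high order, which is exactly what the accompanying \textit{Mathematica} routine does.

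The obstacle is computational, not conceptual. At each step $s\ge 2$ the entries of $\tP_s$ pick up the spurious denominator $\tp_{s-1,s}(x)$, and by the end one manipulates rational functions whose numerators and denominators are polynomials of degree roughly $15$ in $x$ and well over $100$ in $q$; one must verify both that all these denominators cancel in the final equation — guaranteed in principle by the staircase structure of Claim \ref{claim:q-diff}, but delicate to track by hand — and that the concluding normalisation is carried out without slips. This is why the computation is run in a computer algebra system, with the power-series check above serving as a guard against transcription errors in the lengthy coefficients $p_j(x,q)$.
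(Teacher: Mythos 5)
Your proposal follows the paper's own route: reduce the fifteen relations to the framed $5\times 5$ system in $H_0,H_5,H_7,H_{10},H_{12}$ via the monomial identifications you list, run the elimination algorithm of Section \ref{sec:q-diff} with $H_5$ in the lead position, and translate through $G_{\ps_{T_{\mathrm{IV},a}}}(x)=x^{-2}q^2H_5(xq^{-3})$, with the heavy symbolic work delegated to a computer algebra system exactly as the paper does. Your added observation that $qH_5(x)-H_{10}(x)=x^3q^9H_5(xq^3)$ follows from the framed system and correctly accounts for the drop to an order-four equation, a point the paper leaves implicit.
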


\begin{theorem}
It holds that
\begin{align}
p_0(x,q)G_{\ps_{T_{\mathrm{IV},b}}}(x)&+p_3(x,q)G_{\ps_{T_{\mathrm{IV},b}}}(xq^3)+p_6(x,q)G_{\ps_{T_{\mathrm{IV},b}}}(x q^6)\nonumber\\
&+p_9(x,q)G_{\ps_{T_{\mathrm{IV},b}}}(xq^9)+p_{12}(x,q)G_{\ps_{T_{\mathrm{IV},b}}}(xq^{12})=0,
\end{align}
where
{\footnotesize\begin{align*}
p_0(x,q)&=1+x(q^{5}+q^{6}+q^{7}+q^{8}+q^{9}+q^{10})\\
&\quad+ x^2(q^{11}+q^{12}+q^{13}+2 q^{14}+2 q^{15}+2 q^{16}+q^{17}+q^{18}+q^{19})\\
&\quad+ x^3(q^{19}+q^{20}+q^{21}+q^{22}+q^{23}+q^{24}+q^{25}+q^{26}),\\
p_3(x,q)&=-1-x(q^{2}+q^{3}+q^{4}+q^{5}+q^{6}+q^{7}+q^{8}+q^{9}+q^{10})\\
&\quad
-x^2(q^{5}+2 q^{6}+2 q^{7}+3 q^{8}+3 q^{9}+4 q^{10}+4 q^{11}+4 q^{12}+3 q^{13}+3 q^{14}+2 q^{15}+2 q^{16}\\
&\quad\quad\quad\quad+q^{17}+q^{18}+q^{19})\\
&\quad
-  x^3(q^{8}+q^{9}+2 q^{10}+4 q^{11}+5 q^{12}+6 q^{13}+8 q^{14}+8 q^{15}+8 q^{16}+8 q^{17}+8 q^{18}\\
&\quad\quad\quad\quad+7 q^{19}+6 q^{20}+4 q^{21}+3 q^{22}+2 q^{23}+q^{24}+q^{25}+q^{26})\\
&\quad
-x^4(q^{13}+2 q^{14}+3 q^{15}+5 q^{16}+7 q^{17}+8 q^{18}+9 q^{19}+11 q^{20}+13 q^{21}+13 q^{22}\\
&\quad\quad\quad\quad+11 q^{23}+10 q^{24}+8 q^{25}+6 q^{26}+5 q^{27}+4 q^{28}+3 q^{29}+q^{30})\\
&\quad
-x^5(q^{19}+2 q^{20}+2 q^{21}+4 q^{22}+6 q^{23}+8 q^{24}+10 q^{25}+10 q^{26}+10 q^{27}+9 q^{28}\\
&\quad\quad\quad\quad+8 q^{29}+8 q^{30}+7 q^{31}+5 q^{32}+3 q^{33}+q^{34}+q^{35}+q^{36})\\
&\quad
- x^6(q^{27}+2 q^{28}+2 q^{29}+3 q^{30}+3 q^{31}+3 q^{32}+4 q^{33}+4 q^{34}+3 q^{35}+2 q^{36}+2 q^{37}\\
&\quad\quad\quad\quad+q^{38}+q^{39}+q^{40}),\\
p_6(x,q)&=x^4(q^{20}+q^{21}+q^{22}+q^{23}+q^{24}+q^{25})\\
&\quad
+x^5(q^{22}+2 q^{23}+2 q^{24}+3 q^{25}+5 q^{26}+6 q^{27}+5 q^{28}+5 q^{29}+6 q^{30}+5 q^{31}+3 q^{32}\\
&\quad\quad\quad\quad+2 q^{33}+2 q^{34}+q^{35})\\
&\quad
+x^6(q^{26}+2 q^{27}+5 q^{28}+7 q^{29}+8 q^{30}+10 q^{31}+13 q^{32}+14 q^{33}+15 q^{34}+15 q^{35}\\
&\quad\quad\quad\quad+14 q^{36}+13 q^{37}+10 q^{38}+8 q^{39}+7 q^{40}+5 q^{41}+2 q^{42}+q^{43})\\
&\quad
+x^7(q^{31}+2 q^{32}+4 q^{33}+7 q^{34}+10 q^{35}+13 q^{36}+16 q^{37}+19 q^{38}+21 q^{39}+21 q^{40}\\
&\quad\quad\quad\quad+21 q^{41}+21 q^{42}+19 q^{43}+16 q^{44}+13 q^{45}+10 q^{46}+7 q^{47}+4 q^{48}+2 q^{49}\\
&\quad\quad\quad\quad+q^{50})\\
&\quad
+x^8(q^{37}+q^{38}+2 q^{39}+5 q^{40}+7 q^{41}+10 q^{42}+12 q^{43}+14 q^{44}+16 q^{45}+16 q^{46}\\
&\quad\quad\quad\quad+16 q^{47}+16 q^{48}+14 q^{49}+12 q^{50}+10 q^{51}+7 q^{52}+5 q^{53}+2 q^{54}+q^{55}\\
&\quad\quad\quad\quad+q^{56})\\
&\quad
+x^9(q^{45}+q^{46}+2 q^{47}+3 q^{48}+3 q^{49}+4 q^{50}+5 q^{51}+5 q^{52}+5 q^{53}+5 q^{54}+4 q^{55}\\
&\quad\quad\quad\quad+3 q^{56}+3 q^{57}+2 q^{58}+q^{59}+q^{60}),\\
p_{9}(x,q)&=-x^6q^{42}
-x^7(q^{44}+q^{45}+q^{46}+q^{47}+q^{48}+q^{49}+q^{50}+q^{51}+q^{52})\\
&\quad
-x^8(q^{47}+q^{48}+q^{49}+2 q^{50}+2 q^{51}+3 q^{52}+3 q^{53}+4 q^{54}+4 q^{55}+4 q^{56}+3 q^{57}\\
&\quad\quad\quad\quad+3 q^{58}+2 q^{59}+2 q^{60}+q^{61})\\
&\quad
-x^9(q^{52}+q^{53}+q^{54}+2 q^{55}+3 q^{56}+4 q^{57}+6 q^{58}+7 q^{59}+8 q^{60}+8 q^{61}+8 q^{62}\\
&\quad\quad\quad\quad+8 q^{63}+8 q^{64}+6 q^{65}+5 q^{66}+4 q^{67}+2 q^{68}+q^{69}+q^{70})\\
&\quad
-x^{10}(q^{60}+3 q^{61}+4 q^{62}+5 q^{63}+6 q^{64}+8 q^{65}+10 q^{66}+11 q^{67}+13 q^{68}+13 q^{69}\\
&\quad\quad\quad\quad+11 q^{70}+9 q^{71}+8 q^{72}+7 q^{73}+5 q^{74}+3 q^{75}+2 q^{76}+q^{77})\\
&\quad
-x^{11}(q^{66}+q^{67}+q^{68}+3 q^{69}+5 q^{70}+7 q^{71}+8 q^{72}+8 q^{73}+9 q^{74}+10 q^{75}+10 q^{76}\\
&\quad\quad\quad\quad+10 q^{77}+8 q^{78}+6 q^{79}+4 q^{80}+2 q^{81}+2 q^{82}+q^{83})\\
&\quad
-x^{12}(q^{74}+q^{75}+q^{76}+2 q^{77}+2 q^{78}+3 q^{79}+4 q^{80}+4 q^{81}+3 q^{82}+3 q^{83}+3 q^{84}\\
&\quad\quad\quad\quad+2 q^{85}+2 q^{86}+q^{87}),\\
\intertext{and}
p_{12}(x,q)&=x^{12}q^{105} + x^{13}(q^{107}+q^{108}+q^{109}+q^{110}+q^{111}+q^{112})\\
&\quad+ x^{14}(q^{110}+q^{111}+q^{112}+2 q^{113}+2 q^{114}+2 q^{115}+q^{116}+q^{117}+q^{118})\\
&\quad+x^{15}(q^{115}+q^{116}+q^{117}+q^{118}+q^{119}+q^{120}+q^{121}+q^{122}).
\end{align*}}
\end{theorem}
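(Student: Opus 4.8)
The plan is to obtain this statement as an immediate consequence of the $q$-difference equation for $G_{\ps_{T_{\mathrm{IV},a}}}(x)$ proved just above, by exhibiting $G_{\ps_{T_{\mathrm{IV},b}}}$ as a weight-shifted copy of $G_{\ps_{T_{\mathrm{IV},a}}}$. First I would use the operator $\phi$ of Section~\ref{sect:linked-partition-ideal} that adds $1$ to every part (fixing $\emptyset$): for any $\lambda=\lambda_1+\cdots+\lambda_\ell$ the differences $\lambda_i-\lambda_{i+3}$ and $\lambda_i-\lambda_{i+2}$ are unchanged by $\phi$ while $\lambda_i+\lambda_{i+1}+\lambda_{i+2}$ increases by $3$, so conditions (i) and (ii) defining $\ps_{T_{\mathrm{IV}}}$ are preserved and $\phi$ restricts to a bijection from $\{\lambda\in\ps_{T_{\mathrm{IV}}}:\sharp_1(\lambda)\le 1\}$ onto $\{\mu\in\ps_{T_{\mathrm{IV}}}:\sharp_1(\mu)=0,\ \sharp_2(\mu)\le 1\}$, the inverse being subtraction of $1$ from each part (legitimate since every part of such a $\mu$ is at least $2$). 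The latter set is exactly the one defining $G_{\ps_{T_{\mathrm{IV},b}}}$, and since $\sharp(\phi\lambda)=\sharp(\lambda)$ and $|\phi\lambda|=|\lambda|+\sharp(\lambda)$, one gets $G_{\ps_{T_{\mathrm{IV},b}}}(x)=G_{\ps_{T_{\mathrm{IV},a}}}(xq)$.

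Given this identity, I would substitute $x\mapsto xq$ in the $q$-difference equation for $G_{\ps_{T_{\mathrm{IV},a}}}(x)$: the term $p_{3j}(x,q)\,G_{\ps_{T_{\mathrm{IV},a}}}(xq^{3j})$ becomes $p_{3j}(xq,q)\,G_{\ps_{T_{\mathrm{IV},a}}}(xq^{3j+1})=p_{3j}(xq,q)\,G_{\ps_{T_{\mathrm{IV},b}}}(xq^{3j})$, so it remains only to verify that $p_{3j}(xq,q)$ coincides with the coefficient polynomial claimed here, for $j=0,1,2,3,4$. This is a routine term-by-term comparison of the two explicit lists; for instance $p_0$ passes from $1+x(q^4+q^5+q^6+q^7+q^8+q^9)$ to $1+x(q^5+q^6+q^7+q^8+q^9+q^{10})$, matching the stated $p_0$, and similarly for the remaining four polynomials.

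If instead one wants the uniform derivation used for Theorem~\ref{th:q-diff-T11}, the route is: start from the boxed system of five $q$-difference equations in $H_0(x),H_5(x),H_7(x),H_{10}(x),H_{12}(x)$, recall $G_{\ps_{T_{\mathrm{IV},b}}}(x)=x^{-2}H_{12}(xq^{-3})$, reorder the unknowns so that $H_{12}$ sits in the first coordinate, and run the algorithm of Section~\ref{sec:q-diff} with $m=3$, $k=5$. By Claim~\ref{claim:q-diff} the matrices $\tP_2,\dots,\tP_5$ have the asserted staircase shape and the pivots $\tp_{s-1,s}(x)$ are easily seen not to vanish identically, so the algorithm outputs a system $u_i(x)=\sum_{j\le i}r_{i,j}(x)u_j(xq^3)+u_{i+1}(xq^3)$ with $u_0=H_{12}$; eliminating $u_1,u_2,u_3,u_4$ in turn as in \eqref{eq:u0-eli}--\eqref{eq:u6-eli}, then substituting $H_{12}(x)=x^2q^6\,G_{\ps_{T_{\mathrm{IV},b}}}(xq^3)$, shifting $x\mapsto xq^{-3}$, and clearing the common denominator, produces the identity. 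The drop of the order from five to four is explained by the relation $H_{10}(x)=qH_5(x)-x^3q^9\,H_5(xq^3)$, obtained by subtracting $q$ times the $H_5$-row from the $H_{10}$-row of the boxed system, which one may equally well use to reduce to a $4\times 4$ system at the outset.

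The only real obstacle arises in this second, computational route: the intermediate entries $\tp_{i,j}(x)$ and $r_{i,j}(x)$ are rational functions with sizeable numerators and denominators, and the final polynomials $p_{3j}(x,q)$ reach degree $15$ in $x$ and near $120$ in $q$, so the computation is carried out with \textit{Mathematica} (as the remarks above indicate) and independently checked by expanding both sides of the recurrence as a power series in $q$ and matching enough coefficients. The first route avoids this entirely, reducing the proof to the invariance of conditions (i) and (ii) under $\phi$ plus the routine comparison of the $p_{3j}$.
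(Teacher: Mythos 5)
Your proposal is correct, but your primary route is genuinely different from the paper's. The paper proves this theorem the same way as the others in the section: it runs the matrix elimination algorithm of Section~\ref{sec:q-diff} on the boxed $5\times 5$ system for $H_0,H_5,H_7,H_{10},H_{12}$, uses $G_{\ps_{T_{\mathrm{IV},b}}}(x)=x^{-2}H_{12}(xq^{-3})$, and lets \textit{Mathematica} carry out the elimination --- exactly your second, computational route. Your first route instead observes that the map $\phi$ (add $1$ to every part) preserves conditions (i) and (ii) defining $\ps_{T_{\mathrm{IV}}}$ and carries the partitions with $\sharp_1\le 1$ bijectively onto those with smallest part $\ge 2$ and $\sharp_2\le 1$, whence $G_{\ps_{T_{\mathrm{IV},b}}}(x,q)=G_{\ps_{T_{\mathrm{IV},a}}}(xq,q)$; the theorem then follows from the preceding one by the substitution $x\mapsto xq$, and indeed every coefficient polynomial listed here is exactly $p_{3j}(xq,q)$ for the $p_{3j}$ of the $G_{\ps_{T_{\mathrm{IV},a}}}$ theorem (this is also consistent with the explicit multi-sums \eqref{eq:real-gf-t42} and \eqref{eq:real-gf-t43}, whose $q$-exponents differ by exactly $n_1+2n_2+3n_3$). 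This is shorter and more illuminating, at the cost of resting entirely on the previous theorem, which the paper itself only establishes computationally. Two small remarks on your second route: your identity $H_{10}(x)=qH_5(x)-x^3q^9H_5(xq^3)$ does follow from the boxed system and correctly explains why the output has order four rather than five, but substituting it ``at the outset'' would introduce $H_5(xq^6)$ and destroy the first-order form \eqref{eq:system-gen} required by the algorithm --- the order drop instead shows up as the algorithm terminating after four steps; and your indexing of the $u_i$ is off by one (termination after four steps means auxiliary functions $u_1,\ldots,u_3$ beyond $u_0=H_{12}$, not $u_1,\ldots,u_4$). Neither issue affects the validity of the argument.
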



\section{``Guessing'' the generating functions}

It is, of course, not easy to discover a closed form for each generating function directly from $q$-difference equations obtained in the previous section. However, Andrews' conjecture presented in the introduction shall give us enough clues.

Recall that Andrews' conjecture states as follows.

\begin{conjecture}
Every linked partition ideal $\is$ has a bivariate generating function $G_{\is}(x,q)$ of the form
\begin{equation}\label{eq:And-conj}
\sum_{n_1,\ldots,n_r\ge 0}\frac{(-1)^{L_1(n_1,\ldots,n_r)}q^{Q(n_1,\ldots,n_r)+L_2(n_1,\ldots,n_r)}x^{L_3(n_1,\ldots,n_r)}}{(q^{B_1};q^{A_1})_{n_1}\cdots (q^{B_r};q^{A_r})_{n_r}},
\end{equation}
where $L_1$, $L_2$ and $L_3$ are linear forms in $n_1,\ldots,n_r$ and $Q$ is a quadratic form in $n_1,\ldots,n_r$.
\end{conjecture}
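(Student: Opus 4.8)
The plan is to separate the conjecture into a part that the machinery of this paper already delivers and a genuinely structural part that is the true content. Fix an arbitrary linked partition ideal $\is$ of modulus $m$; by Definition \ref{def:linked}(ii) the set $L_{\is}=\{\pi_0,\pi_1,\ldots,\pi_k\}$ (with $\pi_0=\emptyset$) is finite. Writing $F_i(x):=G_{\is_{\pi_i}}(x)$, the fundamental recursion \eqref{eq:important-gf-2} gives, after replacing every span $l(\pi_i)$ by their common value $L:=\mathrm{lcm}_i\, l(\pi_i)$ (iterate the linking relation, or simply pass to modulus $Lm$), a homogeneous linear $q$-difference system of exactly the shape \eqref{eq:system-gen} in which every coefficient $p_{i,j}(x)$ is a monomial $x^{\sharp(\pi_i)}q^{|\pi_i|}$ or zero. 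Feeding this system into the matrix algorithm of Section \ref{sec:q-diff} produces a scalar equation
\[
\sum_{j=0}^{N} P_j(x,q)\,G_{\is}(xq^{jm})=0,\qquad P_j\in\mathbb{Z}[q,q^{-1}][x],
\]
together with the normalization $G_{\is}(0,q)=1$; these data pin down $G_{\is}(x,q)$ uniquely as a formal power series in $x$. Everything to this point is unconditional and uses only results stated in the excerpt.

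The conjecture is therefore equivalent to the assertion that this particular scalar equation admits a solution in the rigid Andrews form \eqref{eq:And-conj}. I would attack existence by an ansatz-matching argument. Introduce the candidate multisum $\Phi(x)$ with undetermined data: a rank $r$, a quadratic form $Q$, linear forms $L_1,L_2,L_3$, and moduli $A_i,B_i$. The dilation $x\mapsto xq^{m}$ multiplies the $\mathbf{n}$-th summand of $\Phi$ by $q^{mL_3(\mathbf{n})}$, while the $q$-Pochhammer recurrence $(q^{B};q^{A})_n=(1-q^{B+A(n-1)})(q^{B};q^{A})_{n-1}$ lets one shift any index $n_i\mapsto n_i-1$ at the cost of an explicit factor. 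Combining these, one expresses each $\Phi(xq^{jm})$ as a $q$-linear combination of index-shifted copies of $\Phi(x)$, and substituting into the scalar equation collapses it to a finite list of polynomial identities in $q$ constraining $Q$, the $L_i$, and the $A_i,B_i$. Solving those constraints yields the closed form.

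The essential structural input---and the place where I expect the program to stall---is the passage from the combinatorics of $\is$ to the shape of the coefficients $P_j$. These are not arbitrary: they are governed by the directed linking graph $\Gamma_{\is}$ on $L_{\is}$, whose edge from $\pi_i$ to $\pi_j$ (present exactly when $\pi_j\in\mathcal{L}_{\is}(\pi_i)$) carries the weight $x^{\sharp(\pi_i)}q^{|\pi_i|}$. The heuristic I would pursue is that the rank $r$ equals the number of independent cycles of $\Gamma_{\is}$, that the variables $n_i$ count traversals of those cycles, and that $Q$ is the accumulated-exponent form obtained by summing $|\pi|$ along iterated linkings; the empirical examples of Section 4, where each $L_{\is}$ yields precisely the Andrews-type denominators, strongly suggest such a dictionary.

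The main obstacle is that nothing a priori forces the scalar equation coming from a general linked partition ideal into the very special subclass whose solutions are Andrews-type multisums: a generic order-$N$ linear $q$-difference equation has a solution space far larger than the rigid ansatz \eqref{eq:And-conj} can reach, and its minimal $x$-analytic solution need not be $q$-hypergeometric at all. Thus the conjecture reduces to a rigidity theorem---that the equations arising from linked partition ideals always lie in the Rogers--Ramanujan--Gordon class. I would try to prove this by induction on the number of nontrivial cycles of $\Gamma_{\is}$, peeling off one linking relation at a time and invoking a Bailey-chain-type lemma to introduce one summation variable per cycle while preserving both the quadratic shape of $Q$ and the pure $q$-Pochhammer denominators. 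Showing that this induction terminates in \emph{every} case, with no residual non-quadratic exponents or non-Pochhammer factors, is exactly the step I do not know how to complete, and it is precisely why the statement remains only a conjecture.
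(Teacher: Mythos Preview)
The paper does not contain a proof of this statement: it is presented there, exactly as here, as an open conjecture of Andrews, and the authors explicitly flag a proof as a ``worthy objective'' rather than something they supply. Your proposal is therefore not being compared against an existing argument but against the absence of one, and you yourself correctly identify the essential gap at the end---you cannot show that the scalar $q$-difference equation produced by the algorithm of Section~\ref{sec:q-diff} always admits a solution of the rigid shape \eqref{eq:And-conj}. That is the whole conjecture.

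A few concrete issues with the sketch as written. First, the reduction to a common span via $L=\mathrm{lcm}_i\,l(\pi_i)$ is not quite right: iterating the linking relation does not simply replace each $l(\pi_i)$ by $L$, and ``passing to modulus $Lm$'' changes $L_{\is}$ and the linking structure in ways you have not controlled. Second, the ansatz-matching step (``substituting into the scalar equation collapses it to a finite list of polynomial identities'') presupposes that some $r$ and some choice of $A_i,B_i$ make the system consistent; establishing that consistency is precisely the rigidity theorem you name as the obstacle, so this step is circular as stated. Third, the heuristic that $r$ equals the number of independent cycles of the linking graph is suggestive but unsupported even in the paper's worked examples, where the reduction of the $H_i$-system to a smaller system (e.g.\ seven functions down to three for type~I) proceeds by ad hoc linear relations rather than by any cycle count. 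In short, your outline accurately locates where the difficulty lies, but none of the preliminary reductions you propose materially narrows it.
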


It also appears to be true that some ``nice'' subsets of a linked partition ideal enjoy a generating function of the form \eqref{eq:And-conj}. One may investigate the second Rogers--Ramanujan identity as an example.

Hence, we may search from a number of multi-summations of the form \eqref{eq:And-conj} and compare the series expansions to find suitable candidates.

\begin{theorem}
Let $G_{\ps_{T_{\mathrm{I},1}}}(x,q)$ (resp.~$G_{\ps_{T_{\mathrm{I},2}}}(x,q)$, $G_{\ps_{T_{\mathrm{I},3}}}(x,q)$) denote the generating function of partitions of type I whose smallest part is at least $1$ (resp.~$2$, $3$). We have
\begin{align}
G_{\ps_{T_{\mathrm{I},1}}}(x,q)&=\sum_{n_1,n_2\ge 0}\frac{q^{n_1^2+3n_2^2+3n_1n_2}x^{n_1+2n_2}}{(q;q)_{n_1} (q^3;q^3)_{n_2}},\label{eq:real-gf-t11}\\
G_{\ps_{T_{\mathrm{I},2}}}(x,q)&=\sum_{n_1,n_2\ge 0}\frac{q^{n_1^2+3n_2^2+3n_1n_2+n_1+3n_2}x^{n_1+2n_2}}{(q;q)_{n_1} (q^3;q^3)_{n_2}},\label{eq:real-gf-t12}\\
G_{\ps_{T_{\mathrm{I},3}}}(x,q)&=\sum_{n_1,n_2\ge 0}\frac{q^{n_1^2+3n_2^2+3n_1n_2+2n_1+3n_2}x^{n_1+2n_2}}{(q;q)_{n_1} (q^3;q^3)_{n_2}}.\label{eq:real-gf-t13}
\end{align}
\end{theorem}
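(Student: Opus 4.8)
The plan is to prove all three identities at once by the ``$q$-difference equation plus uniqueness'' principle, using the scalar equations already established in Theorem~\ref{th:q-diff-T11} and the two theorems following it. Write $\tilde G_1(x),\tilde G_2(x),\tilde G_3(x)$ for the three double sums on the right of \eqref{eq:real-gf-t11}, \eqref{eq:real-gf-t12}, \eqref{eq:real-gf-t13}. For each fixed exponent $N$ there are only finitely many $(n_1,n_2)$ with $n_1+2n_2=N$, so each $\tilde G_i$ is a well-defined element of $\mathbb{Z}[[q]][[x]]$, and reading off the coefficient of $x^0$ shows $\tilde G_i(0,q)=1$, since only $(n_1,n_2)=(0,0)$ contributes. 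On the combinatorial side $G_{\ps_{T_{\mathrm{I},1}}}(0,q)=G_{\ps_{T_{\mathrm{I},2}}}(0,q)=G_{\ps_{T_{\mathrm{I},3}}}(0,q)=1$, the empty partition being the only partition with no parts.

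Next I would isolate a uniqueness lemma for the relevant $q$-difference equations. Consider
\[
p_0(x,q)F(x)+p_3(x,q)F(xq^3)+p_6(x,q)F(xq^6)+p_9(x,q)F(xq^9)=0
\]
with $p_0,\dots,p_9$ as in Theorem~\ref{th:q-diff-T11} (and similarly for the other two functions). Writing $F(x)=\sum_{n\ge0}f_n(q)x^n$ and extracting the coefficient of $x^n$, observe that $p_0$ has $x$-constant term $1$, that $p_3$ has $x$-constant term $-1$, and that $p_6,p_9$ have zero $x$-constant term; hence the coefficient of $f_n$ in the $x^n$-relation is exactly $1-q^{3n}$, while every other term involves only $f_0,\dots,f_{n-1}$. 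Since $1-q^{3n}$ is invertible in $\mathbb{Z}[[q]]$ for $n\ge1$, the equation recursively determines $f_1,f_2,\dots$ from $f_0$, and the $n=0$ relation is vacuous. Therefore any two solutions in $\mathbb{Z}[[q]][[x]]$ with the same constant term are equal; so the proof reduces to checking that each $\tilde G_i$ satisfies the corresponding equation.

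The core step --- and the one I expect to be the main obstacle --- is exactly this verification. The cleanest route is to check that $(\tilde G_1,\tilde G_2,\tilde G_3)$ satisfies the \emph{first-order} system that the true generating functions obey: substituting $H_0(x)=G_{\ps_{T_{\mathrm{I},1}}}(xq^3)$, $H_5(x)=xq^3G_{\ps_{T_{\mathrm{I},2}}}(xq^3)$, $H_6(x)=x^2q^6G_{\ps_{T_{\mathrm{I},3}}}(xq^3)$ into the boxed system \eqref{eq:T1H0New}--\eqref{eq:T1H6New} and then replacing $x$ by $xq^{-3}$ turns that system into relations of the form $G_{\ps_{T_{\mathrm{I},i}}}(x)=\sum_{j}r_{ij}(x,q)\,G_{\ps_{T_{\mathrm{I},j}}}(xq^3)$; the scalar equations of the theorems are then recovered by the elimination of Section~\ref{sec:q-diff} (indeed precisely the elimination carried out in \eqref{eq:u0-eli}--\eqref{eq:u6-eli}). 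Each of the three relations $\tilde G_i(x)=\sum_j r_{ij}(x,q)\tilde G_j(xq^3)$ is then a $q$-series identity that I would prove by the standard device: split the summation according to $n_1=0$ or $n_1\ge1$ and to $n_2=0$ or $n_2\ge1$, shift the relevant index by $1$, and use $(q;q)_{n_1}=(1-q^{n_1})(q;q)_{n_1-1}$ together with $(q^3;q^3)_{n_2}=(1-q^{3n_2})(q^3;q^3)_{n_2-1}$; the quadratic form $n_1^2+3n_2^2+3n_1n_2$ and the linear $x$-exponent $n_1+2n_2$ are chosen exactly so that these shifts reproduce the required rational coefficients.

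Alternatively one may skip the auxiliary system and verify each scalar four-term equation directly, e.g.\ by a $q$-analogue of creative telescoping, producing an explicit certificate whose validity is a routine (if lengthy) rational-function check. I expect the real difficulty to lie in the bookkeeping of these $q$-series manipulations rather than in any conceptual point; once the system has been verified (equivalently, once the three scalar equations have been checked for $\tilde G_1,\tilde G_2,\tilde G_3$), the uniqueness lemma yields $\tilde G_i=G_{\ps_{T_{\mathrm{I},i}}}$ for $i=1,2,3$, which is the assertion of the theorem.
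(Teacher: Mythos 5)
Your reduction is sound and coincides with the first half of the paper's argument: the coefficient of $f_n$ obtained by extracting $x^n$ from the equation in Theorem \ref{th:q-diff-T11} is indeed $1-q^{3n}$ (the $x$-constant terms of $p_0$ and $p_3$ being $1$ and $-1$, those of $p_6,p_9$ being $0$), so a power-series solution is determined by its constant term, and both sides have constant term $1$. Where you genuinely diverge from the paper is in how the double sums are certified to satisfy the $q$-difference equations. The paper never verifies that the sums obey the same four-term equation: it uses \texttt{qMultiSum} to produce, by $q$-creative telescoping, a \emph{different} (order-five) recurrence for the coefficients $\tilde{g}_{\ps_{T_{\mathrm{I},1}}}(M)$, then invokes closure of $q$-holonomic sequences under addition (via \texttt{QREPlus}) to get a single order-six recurrence annihilating the difference $g_{\ps_{T_{\mathrm{I},1}}}(M)-\tilde{g}_{\ps_{T_{\mathrm{I},1}}}(M)$, and finishes by checking six initial values. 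Your plan instead verifies the coupled first-order system \eqref{eq:T1H0New}--\eqref{eq:T1H6New} directly for the three sums via index shifts; this is a legitimate and more self-contained route, closer in spirit to Kur\c{s}ung\"{o}z's treatment, and it buys a human-readable proof at the price of three coupled two-fold-sum identities to be checked by hand. Be aware that this is exactly where the content of the theorem lives: the claim that splitting on $n_1,n_2$ and shifting ``reproduces the required rational coefficients'' is a nontrivial computation you have not carried out (for example, $xq\,\tilde G_1(xq^3)$ reindexes to a sum with numerator $q^{n_1^2+3n_2^2+3n_1n_2+n_1+3n_2}\left(1-q^{n_1}\right)$, and one must track how all such contributions recombine across the three equations). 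Your fallback of certifying the scalar order-six recurrence directly by telescoping is also not quite what the software delivers: it returns a minimal order-five operator, and one would still need to show the order-six operator lies in the left ideal it generates --- which is precisely the step the paper's common-left-multiple construction is designed to sidestep.
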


\begin{remark}
Here \eqref{eq:real-gf-t11}, \eqref{eq:real-gf-t12} and \eqref{eq:real-gf-t13} are (3.1), (3.10) and (3.14) in \cite{Kur2018}. They correspond to the Kanade--Russell conjectures $I_1$, $I_2$ and $I_3$, respectively.
\end{remark}

\begin{theorem}
Let $G_{\ps_{T_{\mathrm{II},1}}}(x,q)$ (resp.~$G_{\ps_{T_{\mathrm{II},2}}}(x,q)$) denote the generating function of partitions of type II whose smallest part is at least $1$ (resp.~$2$) and let $G_{\ps_{T_{\mathrm{II},a}}}(x,q)$ denote the generating function of partitions of type II where $1$ appears at most once. We have
\begin{align}
G_{\ps_{T_{\mathrm{II},1}}}(x,q)&=\sum_{n_1,n_2\ge 0}\frac{q^{n_1^2+3n_2^2+3n_1n_2-n_2}x^{n_1+2n_2}}{(q;q)_{n_1} (q^3;q^3)_{n_2}},\label{eq:real-gf-t21}\\
G_{\ps_{T_{\mathrm{II},2}}}(x,q)&=\sum_{n_1,n_2\ge 0}\frac{q^{n_1^2+3n_2^2+3n_1n_2+n_1+2n_2}x^{n_1+2n_2}}{(q;q)_{n_1} (q^3;q^3)_{n_2}},\label{eq:real-gf-t22}\\
G_{\ps_{T_{\mathrm{II},a}}}(x,q)&=\sum_{n_1,n_2\ge 0}\frac{q^{n_1^2+3n_2^2+3n_1n_2+2n_2}x^{n_1+2n_2}}{(q;q)_{n_1} (q^3;q^3)_{n_2}}.\label{eq:real-gf-t23}
\end{align}
\end{theorem}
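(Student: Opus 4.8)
The plan is to prove the three identities by a uniqueness argument built on the fourth-order $q$-difference equations for $G_{\ps_{T_{\mathrm{II},1}}}$, $G_{\ps_{T_{\mathrm{II},2}}}$ and $G_{\ps_{T_{\mathrm{II},a}}}$ established above. Denote by $\tilde G_1(x),\tilde G_2(x),\tilde G_a(x)$ the double sums on the right of \eqref{eq:real-gf-t21}, \eqref{eq:real-gf-t22} and \eqref{eq:real-gf-t23} respectively; let $G(x)$ stand for any one of the three generating functions and $\tilde G(x)$ for its conjectured counterpart, and write $G(x)=\sum_{n\ge0}g_n(q)x^n$. In each relevant equation $p_0(x,q)G(x)+p_3(x,q)G(xq^3)+p_6(x,q)G(xq^6)+p_9(x,q)G(xq^9)=0$, the printed coefficients satisfy $[x^0]p_0=1$, $[x^0]p_3=-1$, while $p_6$ and $p_9$ carry no constant term in $x$; hence extracting the coefficient of $x^n$ expresses $(1-q^{3n})g_n$ as a fixed $\mathbb{Z}[q]$-linear combination of $g_0,\dots,g_{n-1}$. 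Since $1-q^{3n}$ is invertible (as a power series in $q$) for every $n\ge1$, the equation together with the single value $g_0$ determines the entire series $G(x)$. Both $G$ and $\tilde G$ have $g_0=1$ --- for $G$ because only the empty partition contributes to $[x^0]$, and for $\tilde G$ because only the summand $n_1=n_2=0$ does --- so it remains only to verify that each $\tilde G$ satisfies the same fourth-order equation as the matching $G$.

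Rather than substitute the double sums into the fourth-order operators head-on, I would exploit how those operators were produced. The equation for $G_{\ps_{T_{\mathrm{II},1}}}$ (respectively $G_{\ps_{T_{\mathrm{II},2}}}$, $G_{\ps_{T_{\mathrm{II},a}}}$) was obtained by feeding the framed first-order system \eqref{eq:T2H0New}--\eqref{eq:T2H6New} for $H_0,H_4,H_6$ into the elimination algorithm of Section \ref{sec:q-diff}, through the dictionary \eqref{eq:GT21}--\eqref{eq:GT23}. Inverting that dictionary, put
$$\widehat H_0(x):=\tilde G_1(xq^3),\qquad \widehat H_6(x):=xq^3\,\tilde G_2(xq^3),\qquad \widehat H_4(x):=xq^2\,\tilde G_a(xq^3).$$
If one checks that the triple $\widehat H_0,\widehat H_4,\widehat H_6$ satisfies \eqref{eq:T2H0New}--\eqref{eq:T2H6New}, then running the identical algorithm on it reproduces exactly the fourth-order equations for $\tilde G_1,\tilde G_2,\tilde G_a$, and the uniqueness established in the first paragraph finishes the proof.

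The verification of \eqref{eq:T2H0New}--\eqref{eq:T2H6New} for the $\widehat H$'s comes down to $q$-series identities among double sums. I would prove these by the standard index-shift dissection: write the inner sum over $n_1$ as $D(xq^{3n_2})$ with $D(z)=\sum_{n_1\ge0}q^{n_1^2}z^{n_1}/(q;q)_{n_1}$, invoke the Rogers--Ramanujan-type functional equations satisfied by $D$, and split and reindex the outer sum over $n_2$ using $(q^3;q^3)_{n_2}=(1-q^{3n_2})(q^3;q^3)_{n_2-1}$, until both sides of each framed equation collapse to the same double sum. Equivalently one may feed each candidate double sum together with the corresponding fourth-order operator to a $q$-analogue of Zeilberger's algorithm and confirm that the resulting certificate telescopes; this is in effect what the accompanying \textit{Mathematica} computation does.

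The conceptual content lives entirely in the first paragraph; the genuine labor --- and the main obstacle --- is the bookkeeping in the last step. Two points deserve care. First, one must confirm that the dissection keeps the system closed among these three functions and drags in no further auxiliary double sums; this is dictated by the structure of the linking sets recorded in Claim \ref{claim:T2-3}, but should be checked explicitly. Second, the $q$-shift conventions ($xq^{-3}$ versus $xq^3$) in \eqref{eq:GT21}--\eqref{eq:GT23} must be tracked precisely, so that ``$\widehat H_0,\widehat H_4,\widehat H_6$ solve the framed system'' genuinely translates into ``$\tilde G_1,\tilde G_2,\tilde G_a$ are annihilated by the printed fourth-order operators''; the direct $q$-Zeilberger route sidesteps this translation, at the cost of heavier computation with the bulky coefficients $p_0,\dots,p_9$.
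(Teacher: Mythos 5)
Your proposal is correct in outline but takes a genuinely different route from the paper's. The paper never attempts to show that the double sums satisfy the printed fourth-order $q$-difference operators; instead it converts each $q$-difference equation into a recurrence for the coefficients $g_{\ps_{T_{*}}}(M)$, uses \texttt{qMultiSum} to find an independent order-five recurrence for the coefficients $\tilde{g}_{\ps_{T_{*}}}(M)$ of the double sum, invokes $q$-holonomic closure (\texttt{QREPlus}) to get an order-six recurrence for the difference, and checks six initial values. Your first paragraph makes explicit a uniqueness statement the paper only gestures at: since $[x^0]p_0=1$, $[x^0]p_3=-1$ and $p_6,p_9$ have no constant term in $x$, extracting $[x^n]$ gives $g_n$ the coefficient $1-q^{3n}\ne 0$, so the equation plus $g_0=1$ pins down the series; this reduces the initial-value check to a single value, at the price of having to verify the \emph{exact} fourth-order operator for the double sum rather than just some recurrence. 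Your lift of $\tilde G_1,\tilde G_2,\tilde G_a$ back to $\widehat H_0,\widehat H_4,\widehat H_6$ inverts the dictionary \eqref{eq:GT21}--\eqref{eq:GT23} correctly, and checking the framed system \eqref{eq:T2H0New}--\eqref{eq:T2H6New} for these lifts would indeed suffice, since the fourth-order equations are obtained from that system by pure elimination. The one soft spot is the execution of that check: the manual dissection via the Rogers--Ramanujan function $D(z)$ is plausible but not carried out, and a $q$-Zeilberger-type algorithm does not directly certify a \emph{prescribed} operator --- it produces some annihilating recurrence which must then be reconciled with the target (by divisibility or by finitely many initial values), at which point you have essentially rederived the paper's closure-plus-initial-values argument. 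In short, your framing isolates the conceptual content and needs only $g_0$, while the paper's is computationally more robust because it never requires the double sum to satisfy any particular pre-specified equation.
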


\begin{remark}
Here \eqref{eq:real-gf-t22} is (3.15) in \cite{Kur2018}. It corresponds to the Kanade--Russell conjecture $I_4$.
\end{remark}

\begin{theorem}
Let $G_{\ps_{T_{\mathrm{III},1}}}(x,q)$ (resp.~$G_{\ps_{T_{\mathrm{III},2}}}(x,q)$) denote the generating function of partitions of type III whose smallest part is at least $1$ (resp.~$2$) and let $G_{\ps_{T_{\mathrm{III},a}}}(x,q)$ denote the generating function of partitions of type III where $1$ appears at most once. We have
\begin{align}
&G_{\ps_{T_{\mathrm{III},1}}}(x,q)\nonumber\\
&\quad=\sum_{n_1,n_2,n_3\ge 0}\frac{q^{\frac{n_1^2}{2}+3n_2^2+\frac{9n_3^2}{2}+2 n_1 n_2+6n_2 n_3+3n_3n_1+\frac{n_1}{2}-n_2-\frac{n_3}{2}}x^{n_1+2n_2+3n_3}}{(q;q)_{n_1} (q^2;q^2)_{n_2}  (q^3;q^3)_{n_3}},\label{eq:real-gf-t31}\\
&G_{\ps_{T_{\mathrm{III},2}}}(x,q)\nonumber\\
&\quad=\sum_{n_1,n_2,n_3\ge 0}\frac{q^{\frac{n_1^2}{2}+3n_2^2+\frac{9n_3^2}{2}+2 n_1 n_2+6n_2 n_3+3n_3n_1+\frac{3n_1}{2}+n_2+\frac{5n_3}{2}}x^{n_1+2n_2+3n_3}}{(q;q)_{n_1} (q^2;q^2)_{n_2}  (q^3;q^3)_{n_3}},\label{eq:real-gf-t32}\\
&G_{\ps_{T_{\mathrm{III},a}}}(x,q)\nonumber\\
&\quad=\sum_{n_1,n_2,n_3\ge 0}\frac{q^{\frac{n_1^2}{2}+3n_2^2+\frac{9n_3^2}{2}+2 n_1 n_2+6n_2 n_3+3n_3n_1+\frac{n_1}{2}+n_2+\frac{5n_3}{2}}x^{n_1+2n_2+3n_3}}{(q;q)_{n_1} (q^2;q^2)_{n_2}  (q^3;q^3)_{n_3}}.\label{eq:real-gf-t33}
\end{align}
\end{theorem}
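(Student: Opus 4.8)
The plan is to turn the problem into a routine verification by combining the $q$-difference equations already proved for $G_{\ps_{T_{\mathrm{III},1}}}(x)$, $G_{\ps_{T_{\mathrm{III},2}}}(x)$ and $G_{\ps_{T_{\mathrm{III},a}}}(x)$ with a uniqueness statement. Fix one of the three cases, say $G=G_{\ps_{T_{\mathrm{III},1}}}(x,q)$, and let $S(x,q)$ denote the series on the right of \eqref{eq:real-gf-t31}. One first notes that the exponent of $q$ in $S$ is always a nonnegative integer: $\frac{n_1^2+n_1}{2}$ and $\frac{9n_3^2-n_3}{2}$ are integers, and the quadratic part $\frac{n_1^2}{2}+3n_2^2+\frac{9n_3^2}{2}+2n_1n_2+6n_2n_3+3n_1n_3$ is positive definite on $\mathbb{Z}_{\ge0}^3\setminus\{0\}$; hence $S\in\mathbb{Z}[[q]][[x]]$ with constant term $1$, and $G$ likewise lies in $\mathbb{Z}[[q]][[x]]$ with constant term $1$ (the empty partition). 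Writing $G(x)=\sum_{m\ge0}g_m(q)x^m$ and extracting the coefficient of $x^m$ from
\[
p_0(x,q)G(x)+p_3(x,q)G(xq^3)+p_6(x,q)G(xq^6)+p_9(x,q)G(xq^9)+p_{12}(x,q)G(xq^{12})=0,
\]
and using that $p_0=1+O(x)$, $p_3=-1+O(x)$ while $p_6,p_9,p_{12}$ vanish to orders $4$, $6$, $12$ in $x$, one finds that the $x^0$-relation is the trivial $0=0$ and that for $m\ge1$ the $x^m$-relation has the shape
\[
(1-q^{3m})\,g_m(q)=(\text{an explicit }\mathbb{Z}[q]\text{-linear combination of }g_0,\dots,g_{m-1}).
\]
Since $1-q^{3m}$ is a unit in $\mathbb{Z}[[q]]$ for $m\ge1$, the five-term $q$-difference equation together with $g_0=1$ determines $G$ uniquely as a power series, and the same is true verbatim for $G_{\ps_{T_{\mathrm{III},2}}}$ and $G_{\ps_{T_{\mathrm{III},a}}}$ with their own coefficient polynomials. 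As the sums in \eqref{eq:real-gf-t31}--\eqref{eq:real-gf-t33} all have constant term $1$, everything reduces to checking that each of them satisfies the corresponding five-term equation.

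To carry out that check the cleanest route is to return to the first-order system. Recall that $H_0,H_7,H_{12},H_{13}$ satisfy the $x\mapsto xq^3$ linear system derived above for the partition set of type III (a consequence of \eqref{eq:important-gf-2} and Claim \ref{claim:T3-3}), that $G_{\ps_{T_{\mathrm{III},1}}}(x)=H_0(xq^{-3})$, $G_{\ps_{T_{\mathrm{III},a}}}(x)=x^{-1}H_7(xq^{-3})$, $G_{\ps_{T_{\mathrm{III},2}}}(x)=x^{-3}q^2H_{12}(xq^{-3})$, and that the five-term equations were obtained from this system by the elimination procedure of Section \ref{sec:q-diff}. I would therefore also produce a fourth candidate $\widehat H_{13}$ of the same shape as \eqref{eq:real-gf-t31}--\eqref{eq:real-gf-t33} — same denominator $(q;q)_{n_1}(q^2;q^2)_{n_2}(q^3;q^3)_{n_3}$, same quadratic form, same $x$-exponent $n_1+2n_2+3n_3$, with only the linear part of the $q$-exponent (and possibly a leading monomial) altered — it being uniquely forced by the $H_{13}$-row once $\widehat H_0(x)=S_1(xq^3)$, $\widehat H_7(x)=xq^3S_a(xq^3)$, $\widehat H_{12}(x)=x^3q^7S_2(xq^3)$ are fixed. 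One then verifies the four scalar identities of the system: after clearing denominators by $(q^a;q^a)_n=(1-q^{an})(q^a;q^a)_{n-1}$ and splitting off the $n_i=0$ boundary terms, each collapses to a finite list of term-by-term polynomial identities in the summands. Since $(\widehat H_0,\widehat H_7,\widehat H_{12},\widehat H_{13})$ and $(H_0,H_7,H_{12},H_{13})$ then solve the same first-order system and agree in lowest $x$-order, they coincide, and \eqref{eq:real-gf-t31}--\eqref{eq:real-gf-t33} follow. Alternatively one may bypass $\widehat H_{13}$ and confirm each five-term equation in one stroke by multivariate $q$-creative telescoping (producing rational certificates), or — equally rigorously — by first bounding a priori the order of a recurrence annihilating $p_0(x,q)S(x)+\cdots+p_{12}(x,q)S(xq^{12})$ and then checking that finitely many of its $x$-coefficients vanish.

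The main obstacle is precisely this verification step; the reduction above is essentially bookkeeping. In the first-order-system approach the work is to pin down the companion sum $\widehat H_{13}$ and to control the three asymmetric bases $q,q^2,q^3$ together with the cross terms $2n_1n_2$, $6n_2n_3$, $3n_1n_3$ when matching coefficients; in the creative-telescoping approach the work is the size of the certificates for an order-four, five-term recurrence in a triple sum. In all cases the computation is finite and mechanical, well within reach of the computer algebra the authors already use, and no idea beyond the uniqueness reduction is required.
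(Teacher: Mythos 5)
Your reduction to a uniqueness statement is exactly the logic the paper relies on: since $p_0$ and $p_3$ have constant terms $1$ and $-1$ while $p_6,p_9,p_{12}$ are divisible by $x^4$, $x^6$, $x^{12}$, the coefficient of $x^M$ in the five-term equation reads $(1-q^{3M})g_M=(\text{combination of }g_0,\dots,g_{M-1})$, so the equation together with $g_0=1$ pins down the series. (One small slip: the quadratic form is only positive \emph{semi}definite, being equal to $\tfrac12(n_1+2n_2+3n_3)^2+n_2^2$; but it is still positive on $\mathbb{Z}_{\ge0}^3$ away from the origin, which is all you need.) Where you and the paper part ways is in how the triple sums are shown to satisfy the five-term equations. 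The paper's actual proof is your second alternative, carried out exactly as in the worked example for \eqref{eq:real-gf-t11}: extract the coefficient sequence $\tilde{g}(M)$ of the triple sum, compute a $q$-holonomic recurrence for it with \texttt{qMultiSum} (order $4$ in the type III cases, per Table \ref{ta:order}), combine it with the order-$15$ recurrence coming from the five-term $q$-difference equation via closure of $q$-holonomic sequences under addition (\texttt{QREPlus}), and check that $g(M)-\tilde{g}(M)$ vanishes for the first $15$ values of $M$. So that branch of your proposal coincides with the paper's argument.

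Your preferred branch --- verifying the first-order system in $H_0,H_7,H_{12},H_{13}$ directly for the candidate sums --- is genuinely different and, if completed, would be more structural (it is close in spirit to Kur\c{s}ung\"oz's constructions). But as written it has a real gap: the companion series $\widehat H_{13}$ is never produced. The $H_{13}$-row determines $H_{13}$ uniquely as a formal power series once $H_0$ and $H_7$ are fixed, but it in no way forces $H_{13}$ to be a single triple sum over the same quadratic form and denominator with only the linear exponent changed; that is an additional conjecture you would have to discover and then verify, and nothing in the proposal supplies it. Likewise, the claim that each row of the system "collapses to a finite list of term-by-term polynomial identities" after clearing denominators is optimistic for a four-function system over three summation indices with the three distinct bases $q,q^2,q^3$; in practice one expects to need shifted-index recombinations or telescoping certificates there as well. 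Until $\widehat H_{13}$ is exhibited and the four identities are actually checked, this branch is a plan rather than a proof --- which is presumably why the paper routes everything through the univariate coefficient recurrences instead.
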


\begin{remark}
Here \eqref{eq:real-gf-t33} is (47) (corrected: in its numerator, the last term of the exponent of $q$ should read $4k$ instead of $3k$) in \cite{KR2018}. It corresponds to the Kanade--Russell conjecture $I_5$.
\end{remark}

\begin{theorem}
Let $G_{\ps_{T_{\mathrm{IV},1}}}(x,q)$ denote the generating function of partitions of type IV whose smallest part is at least $1$, let $G_{\ps_{T_{\mathrm{IV},a}}}(x,q)$ denote the generating function of partitions of type IV where $1$ appears at most once and let $G_{\ps_{T_{\mathrm{IV},b}}}(x,q)$ denote the generating function of partitions of type IV where the smallest part is at least $2$ with $2$ appearing at most once. We have
\begin{align}
&G_{\ps_{T_{\mathrm{IV},1}}}(x,q)\nonumber\\
&\quad=\sum_{n_1,n_2,n_3\ge 0}\frac{q^{\frac{n_1^2}{2}+3n_2^2+\frac{9n_3^2}{2}+2 n_1 n_2+6n_2 n_3+3n_3n_1+\frac{n_1}{2}-n_2+\frac{n_3}{2}}x^{n_1+2n_2+3n_3}}{(q;q)_{n_1} (q^2;q^2)_{n_2}  (q^3;q^3)_{n_3}},\label{eq:real-gf-t41}\\
&G_{\ps_{T_{\mathrm{IV},a}}}(x,q)\nonumber\\
&\quad=\sum_{n_1,n_2,n_3\ge 0}\frac{q^{\frac{n_1^2}{2}+3n_2^2+\frac{9n_3^2}{2}+2 n_1 n_2+6n_2 n_3+3n_3n_1+\frac{n_1}{2}+n_2+\frac{n_3}{2}}x^{n_1+2n_2+3n_3}}{(q;q)_{n_1} (q^2;q^2)_{n_2}  (q^3;q^3)_{n_3}},\label{eq:real-gf-t42}\\
&G_{\ps_{T_{\mathrm{IV},b}}}(x,q)\nonumber\\
&\quad=\sum_{n_1,n_2,n_3\ge 0}\frac{q^{\frac{n_1^2}{2}+3n_2^2+\frac{9n_3^2}{2}+2 n_1 n_2+6n_2 n_3+3n_3n_1+\frac{3n_1}{2}+3n_2+\frac{7n_3}{2}}x^{n_1+2n_2+3n_3}}{(q;q)_{n_1} (q^2;q^2)_{n_2}  (q^3;q^3)_{n_3}}.\label{eq:real-gf-t43}
\end{align}
\end{theorem}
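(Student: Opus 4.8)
The plan is to guess explicit solutions of the first-order $q$-difference system already recorded for the building blocks $H_0,H_5,H_7,H_{10},H_{12}$ of $\ps_{T_{\mathrm{IV}}}$ (the framed system following Claim~\ref{claim:T4-3}), to pin them down by a uniqueness argument, and to read off the three identities; as a by-product the fifth-order $q$-difference equations established above are recovered. Write $\Phi_1(x)$, $\Phi_a(x)$, $\Phi_b(x)$ for the triple sums on the right of \eqref{eq:real-gf-t41}, \eqref{eq:real-gf-t42}, \eqref{eq:real-gf-t43}, set $\widehat H_0(x):=\Phi_1(xq^3)$, and let $\widehat H_5,\widehat H_{12}$ be the correspondingly normalized candidates built from $\Phi_a,\Phi_b$ via the relations $G_{\ps_{T_{\mathrm{IV},a}}}(x)=x^{-2}q^2H_5(xq^{-3})$ and $G_{\ps_{T_{\mathrm{IV},b}}}(x)=x^{-2}H_{12}(xq^{-3})$, together with analogous candidates $\widehat H_7,\widehat H_{10}$ exhibiting the orders of vanishing at $x=0$ that $H_7,H_{10}$ must have. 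Each $\widehat H_i$ is a well-defined formal power series in $x$ with coefficients in $\mathbb Z[[q]]$: for each power of $x$ only finitely many triples contribute, and the exponents of $q$ are integers (the half-integer pieces $\tfrac12(n_1^2+n_1)$ and $\tfrac12(9n_3^2+n_3)$ being integral) that are bounded below.

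The first step is a uniqueness principle. The framed system has the shape $\mathbf H(x)=M(x)\mathbf H(xq^3)$ with $M$ a $5\times 5$ matrix of polynomials in $x,q$, and $M(0)$ has first row $(1,1,1,1,1)$ and all other rows zero. Comparing coefficients of $x^n$, the coefficient vector $\mathbf c_n$ of $\mathbf H$ therefore satisfies $\bigl(I-q^{3n}M(0)\bigr)\mathbf c_n=(\text{a }\mathbb Z[q]\text{-combination of the }\mathbf c_m,\ m<n)$, and $\det\bigl(I-q^{3n}M(0)\bigr)=1-q^{3n}\neq 0$ for $n\ge 1$; for $n=0$ the relation forces the last four components of $\mathbf c_0$ to vanish and leaves the first free. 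Hence a power-series solution of the framed system is determined completely by $[x^0]H_0$, and the genuine solution has $[x^0]H_0=1$ (the empty partition being the only partition of weight $0$ with $3$-tail $\emptyset$). It therefore suffices to prove that $(\widehat H_0,\widehat H_5,\widehat H_7,\widehat H_{10},\widehat H_{12})$ solves the framed system, since $\widehat H_0(0)=\Phi_1(0)=1$ is immediate. The very same computation applies to the eliminated operators $\mathcal L[F]=p_0F(x)+p_3F(xq^3)+p_6F(xq^6)+p_9F(xq^9)+p_{12}F(xq^{12})$ of the three type-IV theorems above, for which the coefficient of $[x^n]F$ equals $1-q^{3n}$ because $p_0(0,q)=1$, $p_3(0,q)=-1$, and $p_6,p_9,p_{12}$ are divisible by $x^4$; so the identification could just as well be made by checking $\mathcal L[\Phi_\bullet]=0$ directly.

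The second step is the verification that $(\widehat H_0,\dots,\widehat H_{12})$ solves the framed system. This reduces to elementary bookkeeping with the multi-sums: in each $\widehat H_i$, split every summation index $n_j$ into its $n_j=0$ part and its $n_j\ge 1$ part, reindex the latter by $n_j\mapsto n_j+1$, and absorb the resulting powers of $q$ into the quadratic and linear forms in the numerator; the identities so produced express each $\widehat H_i(x)$ as a combination, with coefficients in $\mathbb Z[x,q]$, of the $\widehat H_{i'}(xq^3)$, and one matches these row by row against the framed system. Feeding this system into the elimination algorithm of Section~\ref{sec:q-diff} then reproduces the explicit polynomials $p_0,\dots,p_{12}$ of the three type-IV theorems, completing the circle. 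As an alternative to this whole step one may prove $\mathcal L[\Phi_\bullet]=0$ by $q$-creative telescoping: $\mathcal L$ applied to the summand of $\Phi_\bullet$ becomes a telescoping sum of forward differences $\Delta_{n_1}(\cdots)+\Delta_{n_2}(\cdots)+\Delta_{n_3}(\cdots)$ whose total vanishes, the boundary terms at $n_j=-1$ being killed by the factors $1/(q^{A_j};q^{A_j})_{n_j}$ and those at $n_j\to\infty$ vanishing $x$-adically.

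The main obstacle is this second step. In the system approach the delicate point is that the family $\{\widehat H_0,\widehat H_5,\widehat H_7,\widehat H_{10},\widehat H_{12}\}$ must be closed under the index shifts — every shift has to land back on a sum already in the list, which is exactly the requirement that the linking-set data of Claim~\ref{claim:T4-3} be faithfully mirrored by the translations of $(n_1,n_2,n_3)$, and this in particular dictates the correct choice of the two auxiliary forms for $\widehat H_7$ and $\widehat H_{10}$; one must then also confirm that Andrews' elimination outputs the stated polynomials rather than some equivalent but differently written recurrence. Both tasks are mechanical but heavy, the polynomial comparison being best delegated to a computer algebra system exactly as in the earlier subsections, and the creative-telescoping route carries the parallel burden of producing and certifying three-fold nested rational certificates. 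What makes any such finite computation decisive is the uniqueness of the first step, which rests only on the explicit normalization $M(0)$ — equivalently, on $p_0(0,q)=1$ and $p_3(0,q)=-1$.
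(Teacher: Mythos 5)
Your first step (uniqueness) is sound and is exactly the logical backbone of the paper's argument: because $M(0)$ is the rank-one matrix with first row $(1,\dots,1)$, or equivalently because $p_0(0,q)=1$, $p_3(0,q)=-1$ and the higher $p_{3i}$ vanish to order $\ge 4$ at $x=0$, any formal power-series solution is pinned down by its constant term, so the whole theorem reduces to verifying that each candidate multi-sum is annihilated by the relevant operator and has constant term $1$. The problem is that your primary route for the verification step does not work as described. You never construct $\widehat H_7$ and $\widehat H_{10}$; the theorem supplies candidate closed forms only for $H_0$, $H_5$ and $H_{12}$, and asserting that suitable Andrews-type multi-sums for the remaining two components ``must exist and are dictated by the linking-set data'' is precisely the hard content, not a bookkeeping detail. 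Moreover, the manipulation you propose --- splitting each $n_j$ at $0$ and shifting $n_j\mapsto n_j+1$ --- changes the $x$-exponent $n_1+2n_2+3n_3$ by $1$, $2$ or $3$ and hence produces relations among the sums at arguments $xq$, $xq^2$ and $xq^3$ simultaneously; assembling these into the specific step-$q^3$ five-by-five framed system is exactly the combinatorial labour of Kur\c{s}ung\"oz's and Kanade--Russell's papers, which this paper's method is designed to avoid. As written, this route is a plan to discover a proof, not a proof.

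Your fallback route is the paper's actual proof, and you should promote it to the main line. The paper does not verify the framed system for the multi-sums at all: it converts the scalar $q$-difference equation into an order-$15$ recurrence for the coefficients $g_{\ps_{T_{*}}}(M)$ of $G_{\ps_{T_{\mathrm{IV},*}}}$, uses \texttt{qMultiSum} to produce (with certificates) an order-$4$ recurrence for the coefficients $\tilde g_{\ps_{T_{*}}}(M)$ of the triple sum, applies \texttt{QREPlus} to obtain an order-$15$ recurrence for the difference, and checks $15$ initial values. Note one subtlety you elide: it is cleaner to combine the two recurrences than to prove $\mathcal L[\Phi_\bullet]=0$ directly, since the latter requires showing that $\mathcal L$ lies in the left ideal generated by the (a priori unrelated, lower-order) annihilator that creative telescoping actually returns for $\Phi_\bullet$. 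Until one of these finite computations is carried out, the identification of the two sides has not been established.
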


\begin{remark}
Here \eqref{eq:real-gf-t43} is (51) in \cite{KR2018}. It corresponds to the Kanade--Russell conjecture $I_6$.
\end{remark}

In the above theorems, we rediscover six generating function identities proved in \cite{KR2018} and \cite{Kur2018} and obtain six new identities. We will provide an approach to prove these identities in the next section with the help of computer algebra.

\begin{remark}
	It is, of course, fine to discover the above sum-like generating functions by trial and error with this tedious work left to a computer. But sometimes human observation might reduce the workload. Let us use \eqref{eq:real-gf-t11} as an example. If we write
	\begin{equation*}
	G_{\ps_{T_{\mathrm{I},1}}}(x,q)=\sum_{M\ge 0}g_{\ps_{T_{\mathrm{I},1}}}(M)x^M,
	\end{equation*}
	then the $q$-difference equation in Theorem \ref{th:q-diff-T11} gives the first several expressions of $g_{\ps_{T_{\mathrm{I},1}}}(M)$:
	\begin{align*}
	g_{\ps_{T_{\mathrm{I},1}}}(0)&=1,\\
	g_{\ps_{T_{\mathrm{I},1}}}(1)&=\frac{q}{1-q},\\
	g_{\ps_{T_{\mathrm{I},1}}}(2)&=\frac{q^3+q^4+q^6}{(1-q^2)(1-q^3)},\\
	g_{\ps_{T_{\mathrm{I},1}}}(3)&=\frac{q^7}{(1-q)(1-q^2)(1-q^3)},\\
	g_{\ps_{T_{\mathrm{I},1}}}(4)&=\frac{q^{12}+q^{15}+q^{17}+q^{18}-q^{19}+q^{20}-q^{21}}{(1-q)(1-q^3)(1-q^4)(1-q^6)}.
	\end{align*}
	Recall that the sum-like generating function is
	\begin{equation}\label{eq:and-gf}
	\sum_{n_1,\ldots,n_r\ge 0}\frac{(-1)^{L_1(n_1,\ldots,n_r)}q^{Q(n_1,\ldots,n_r)+L_2(n_1,\ldots,n_r)}x^{L_3(n_1,\ldots,n_r)}}{(q^{B_1};q^{A_1})_{n_1}\cdots (q^{B_r};q^{A_r})_{n_r}}.
	\end{equation}
	We observe that the numerator of $g_{\ps_{T_{\mathrm{I},1}}}(2)$ has more than one term. Hence, the linear equation $L_3(n_1,\ldots,n_r)=2$ might have multiple nonnegative solutions $(n_1,\ldots,n_r)$. It is fair to guess that $L_3$ looks like $n_1+n_2+\cdots$ or $n_1+2n_2+\cdots$. Also, the denominators of $g_{\ps_{T_{\mathrm{I},1}}}(M)$ indicate that there might be terms like $(q;q)_{n}$ and $(q^3;q^3)_{n}$ in the denominator of the summand in \eqref{eq:and-gf}. Hence, one may first try multi-summations like
	$$\sum_{n_1,n_2\ge 0}\frac{(-1)^{L_1(n_1,n_2)}q^{Q(n_1,n_2)+L_2(n_1,n_2)}x^{n_1+n_2}}{(q;q)_{n_1} (q^{3};q^{3})_{n_2}}$$
	or
	$$\sum_{n_1,n_2\ge 0}\frac{(-1)^{L_1(n_1,n_2)}q^{Q(n_1,n_2)+L_2(n_1,n_2)}x^{n_1+2n_2}}{(q;q)_{n_1} (q^{3};q^{3})_{n_2}}.$$
	If these expressions fail to be a candidate, then one could continue to modify them and carry on the searching procedure. However, it should be emphasized that in this remark we do not intend to assert that the sum-like generating function must contain some particular ``magical'' exponents and bases.
\end{remark}

\section{Computer algebra assistance}

Proofs of the generating function identities in the previous section can be carried out by the same procedure. We only demonstrate \eqref{eq:real-gf-t11} as an instance.

\subsection{The main idea}

Let us write
\begin{equation}\label{eq:T11-new}
G_{\ps_{T_{\mathrm{I},1}}}(x,q)=\sum_{M\ge 0}g_{\ps_{T_{\mathrm{I},1}}}(M)x^M,
\end{equation}
where $g_{\ps_{T_{\mathrm{I},1}}}(M)\in\mathbb{Q}(q)$. We can translate the $q$-difference equation in Theorem \ref{th:q-diff-T11} to a recurrence of $g_{\ps_{T_{\mathrm{I},1}}}(M)$.

\begin{definition}
Let $\mathbb{K}=\mathbb{Q}(q)$ with $q$ transcendental. A sequence $(a_n)$ in $\mathbb{K}$ is called \textit{$q$-holonomic} if there exist $p,p_0,\ldots,p_r\in\mathbb{K}[x]$, not all zero, such that
$$p_0(q^n)a_n+p_1(q^n)a_{n+1}+\cdots+p_r(q^n)a_{n+r}=p(q^n).$$
\end{definition}

Hence, the sequence $g_{\ps_{T_{\mathrm{I},1}}}(M)$ is $q$-holonomic.

On the other hand, if we write
\begin{equation}\label{eq:T11-NewNew}
\sum_{n_1,n_2\ge 0}\frac{q^{n_1^2+3n_2^2+3n_1n_2}x^{n_1+2n_2}}{(q;q)_{n_1} (q^3;q^3)_{n_2}}=\sum_{M\ge 0}\tilde{g}_{\ps_{T_{\mathrm{I},1}}}(M)x^M,
\end{equation}
we may also find a recurrence relation satisfied by $\tilde{g}_{\ps_{T_{\mathrm{I},1}}}(M)$. Hence, $\tilde{g}_{\ps_{T_{\mathrm{I},1}}}(M)$ is also $q$-holonomic.

A result of Kauers and Koutschan \cite{KK2009} states that if two sequences $(a_n)$ and $(b_n)$ are $q$-holonomic, so is their linear combination $(\alpha a_n+\beta b_n)$. Hence, we may find a recurrence relation satisfied by $g_{\ps_{T_{\mathrm{I},1}}}(M)-\tilde{g}_{\ps_{T_{\mathrm{I},1}}}(M)$. As long as $g_{\ps_{T_{\mathrm{I},1}}}(M)-\tilde{g}_{\ps_{T_{\mathrm{I},1}}}(M)=0$ for enough initial cases, we are safe to say that this difference is identical to $0$ for all $M$ and hence arrive at the desired generating function identity.

\subsection{Two \textit{Mathematica} packages}

To proceed with our proof, we require two \textit{Mathematica} packages: \texttt{qMultiSum} \cite{Rie2003} and \texttt{qGeneratingFunctions} \cite{KK2009}. These packages along with their instructions can be found on the webpage of Research Institute for Symbolic Computation (RISC) of Johannes Kepler University.\footnote{See \url{https://www3.risc.jku.at/research/combinat/software/ergosum/index.html}.}

To begin with, we load the two packages after installing them.
\begin{lstlisting}[language=Mathematica]
<<RISC`qMultiSum`
<<RISC`qGeneratingFunctions`
\end{lstlisting}

\subsection{Recurrence for $g_{\ps_{T_{\mathrm{I},1}}}(M)$}

For the polynomials $p_{3i}(x,q)$ ($i=0,\ldots,3$) defined in Theorem \ref{th:q-diff-T11}, we write
$$p_{3i}(x,q)=\sum_{j=0}^{J_{3i}}p_{3i,j}(q)x^j.$$

Then with \eqref{eq:T11-new}, one may rewrite \eqref{eq:q-diff-T11} as
\begin{align*}
0&=\sum_{i=0}^3 p_{3i}(x,q)G_{\ps_{T_{\mathrm{I},1}}}(xq^{3i})\\
&=\sum_{i=0}^3\sum_{j=0}^{J_{3i}}\sum_{m\ge 0}p_{3i,j}(q)g_{\ps_{T_{\mathrm{I},1}}}(m) q^{3im}x^{m+j}\\
&=\sum_{M\ge 0}\sum_{i=0}^3 \sum_{m=\max(0,M-J_{3i})}^M q^{3im}p_{3i,M-m}(x,q)g_{\ps_{T_{\mathrm{I},1}}}(m) x^M.
\end{align*}
Hence, for all $M\ge 0$,
\begin{align}
\sum_{i=0}^3 \sum_{m=\max(0,M-J_{3i})}^M q^{3im}p_{3i,M-m}(x,q)g_{\ps_{T_{\mathrm{I},1}}}(m) = 0,
\end{align}
from which we see that $g_{\ps_{T_{\mathrm{I},1}}}(M)$ ($M\ge 1$) is uniquely determined by $g_{\ps_{T_{\mathrm{I},1}}}(0)$. It is also trivial that $g_{\ps_{T_{\mathrm{I},1}}}(0)=1$.

In particular, for $M\ge 0$, we have the following recurrence
\begin{align}
0&=g_{\ps_{T_{\mathrm{I},1}}}(M)\left((q^{28}+q^{30})q^{9M}\right)\nonumber\\
&\quad+g_{\ps_{T_{\mathrm{I},1}}}(M+1)\left((q^{19}+q^{21})q^{6(M+1)}+q^{27}q^{9(M+1)}\right)\nonumber\\
&\quad+g_{\ps_{T_{\mathrm{I},1}}}(M+2)\left((q^{14}+q^{15}+q^{16}+q^{17}+q^{18})q^{6(M+2)}\right)\nonumber\\
&\quad+g_{\ps_{T_{\mathrm{I},1}}}(M+3)\left(-(q^7+q^9+q^{10}+q^{12})q^{3(M+3)}+(q^{11}+q^{13})q^{6(M+3)}\right)\nonumber\\
&\quad+g_{\ps_{T_{\mathrm{I},1}}}(M+4)\left(-(q^3+q^4+q^5+2q^6+q^7+q^8+q^9)q^{3(M+4)}\right)\nonumber\\
&\quad+g_{\ps_{T_{\mathrm{I},1}}}(M+5)\left((q^4+q^6)-(q+q^2+q^3+q^4+q^6)q^{3(M+5)}\right)\nonumber\\
&\quad+g_{\ps_{T_{\mathrm{I},1}}}(M+6)\left(1-q^{3(M+6)}\right).\label{eq:T11-rec1}
\end{align}

\subsection{Recurrence for $\tilde{g}_{\ps_{T_{\mathrm{I},1}}}(M)$}

Notice that for $M\ge 0$,
$$\tilde{g}_{\ps_{T_{\mathrm{I},1}}}(M)=\sum_{n\le \frac{M}{2}}\frac{q^{(M-2n)^2+3n^2+3n(M-2n)}}{(q;q)_{M-2n}(q^3;q^3)_n}.$$

The recurrence satisfied by $\tilde{g}_{\ps_{T_{\mathrm{I},1}}}(M)$ can be computed automatically by the \texttt{qMultiSum} package with the following codes:
\begin{lstlisting}[language=Mathematica]
ClearAll[M];
summand = q^(3n^2+(M-2n)^2+3n(M-2 n))/(qPochhammer[q,q,M-2n] qPochhammer[q^3,q^3,n]);
stru = qFindStructureSet[summand, {M}, {n}, {1}, {2}, {2}, qProtocol -> True]
rec = qFindRecurrence[summand, {M}, {n}, {1}, {2}, {2}, qProtocol -> True, StructSet -> stru[[1]]]
sumrec = qSumRecurrence[rec]
\end{lstlisting}

This gives us, for $M\ge 0$,
{\footnotesize\begin{align}
0&=\tilde{g}_{\ps_{T_{\mathrm{I},1}}}(M)q^{9M+24}(1+2q^2+q^4+q^{3M+14})\nonumber\\
&\quad+\tilde{g}_{\ps_{T_{\mathrm{I},1}}}(M+1)q^{6M+21}(1+2q^2+q^4-q^{3M+8}-q^{3M+10}+q^{3M+11}+q^{3M+13}+q^{3M+14})\nonumber\\
&\quad+\tilde{g}_{\ps_{T_{\mathrm{I},1}}}(M+2)q^{6M+22}(1+q^2)(1+q^2+q^3+q^4+q^{3M+12})\nonumber\\
&\quad-\tilde{g}_{\ps_{T_{\mathrm{I},1}}}(M+3)q^{3M+12}(1+q^2)(1-q+q^2)(1+q+q^2+q^3+q^{3M+12})\nonumber\\
&\quad-\tilde{g}_{\ps_{T_{\mathrm{I},1}}}(M+4)q^{3M+12}(1-q+q^2)(1+q+q^2)(1+q+q^2+q^3+q^{3M+13})\nonumber\\
&\quad+\tilde{g}_{\ps_{T_{\mathrm{I},1}}}(M+5)(1-q^{3M+15})(1+2q^2+q^4+q^{3M+11}).\label{eq:T11-rec2}
\end{align}}

\subsection{Recurrence for $g_{\ps_{T_{\mathrm{I},1}}}(M)-\tilde{g}_{\ps_{T_{\mathrm{I},1}}}(M)$}

Finally, we deduce the recurrence for $g_{\ps_{T_{\mathrm{I},1}}}(M)-\tilde{g}_{\ps_{T_{\mathrm{I},1}}}(M)$ from \eqref{eq:T11-rec1} and \eqref{eq:T11-rec2}. This can be accomplished by the \texttt{QREPlus} function of the \texttt{qGeneratingFunctions} package.

We need the following codes, in which \texttt{sumrec1} records the recurrence relation for $g_{\ps_{T_{\mathrm{I},1}}}(M)$ and \texttt{sumrec2} records the recurrence relation for $\tilde{g}_{\ps_{T_{\mathrm{I},1}}}(M)$.
\begin{lstlisting}[language=Mathematica]
ClearAll[M];
sumrec1 = {SUM[M] ((q^(28)+q^(30))q^(9M))
 + SUM[M+1] ((q^(19)+q^(21))q^(6(M+1))+q^(27)q^(9(M+1)))
 + SUM[M+2] ((q^(14)+q^(15)+q^(16)+q^(17)+q^(18))q^(6(M+2)))
 + SUM[M+3] (-(q^7+q^9+q^(10)+q^(12))q^(3(M+3))+(q^(11)+q^(13))q^(6(M+3)))
 + SUM[M+4] (-(q^3+q^4+q^5+2q^6+q^7+q^8+q^9)q^(3(M+4)))
 + SUM[M+5] ((q^4+q^6)-(q+q^2+q^3+q^4+q^6)q^(3(M+5)))
 + SUM[M+6] (1-q^(3(M+6)))
== 0};
sumrec2 = {SUM[M] q^(9M+24) (1+2q^2+q^4+q^(3M+14))
 + SUM[M+1] q^(6M+21) (1+2q^2+q^4-q^(3M+8)-q^(3M+10)+q^(3M+11)+q^(3M+13)+q^(3M+14))
 + SUM[M+2] q^(6M+22) (1+q^2) (1+q^2+q^3+q^4+q^(3M+12))
 - SUM[M+3] q^(3M+12) (1+q^2) (1-q+q^2) (1+q+q^2+q^3+q^(3M+12))
 - SUM[M+4] q^(3M+12) (1-q+q^2) (1+q+q^2) (1+q+q^2+q^3+q^(3M+13))
 + SUM[M+5] (1-q^(3M+15)) (1+2q^2+q^4+q^(3M+11))
== 0};
QREPlus[sumrec1, sumrec2, SUM[M]]
\end{lstlisting}

The output gives us an order six recurrence. Hence, to show
$$g_{\ps_{T_{\mathrm{I},1}}}(M)=\tilde{g}_{\ps_{T_{\mathrm{I},1}}}(M)$$
for all $M\ge 0$, it suffices to show that the equality holds for $M=0,\ldots,5$. This can be checked easily.

We therefore arrive at
$$G_{\ps_{T_{\mathrm{I},1}}}(x,q)=\sum_{n_1,n_2\ge 0}\frac{q^{n_1^2+3n_2^2+3n_1n_2}x^{n_1+2n_2}}{(q;q)_{n_1} (q^3;q^3)_{n_2}}.$$

\subsection{Other identities}

Similar to \eqref{eq:T11-new} and \eqref{eq:T11-NewNew}, let us write
$$G_{\ps_{T_{*}}}(x,q)=\sum_{M\ge 0}g_{\ps_{T_{*}}}(M)x^M$$
and the multi-summations on the right hand sides of \eqref{eq:real-gf-t12}--\eqref{eq:real-gf-t43} as
$$\sum_{M\ge 0}\tilde{g}_{\ps_{T_{*}}}(M)x^M,$$
where ``$*$'' may be ``$\mathrm{I},2$'', ``$\mathrm{I},3$'', etc. We list the orders of recurrences satisfied by $g_{\ps_{T_{*}}}(M)$, $\tilde{g}_{\ps_{T_{*}}}(M)$ and $g_{\ps_{T_{*}}}(M)-\tilde{g}_{\ps_{T_{*}}}(M)$ in Table \ref{ta:order} for the reader's convenience.

{\footnotesize\begin{table}[ht]\caption{Orders of recurrences satisfied by $g_{\ps_{T_{*}}}(M)$, $\tilde{g}_{\ps_{T_{*}}}(M)$ and $g_{\ps_{T_{*}}}(M)-\tilde{g}_{\ps_{T_{*}}}(M)$}\label{ta:order}
\centering
\begin{tabular}{cccccccccccc}
\hline
$*$ & $\mathrm{I},2$ & $\mathrm{I},3$ & $\mathrm{II},1$ & $\mathrm{II},2$ & $\mathrm{II},a$ & $\mathrm{III},1$ & $\mathrm{III},2$ & $\mathrm{III},a$ & $\mathrm{IV},1$ & $\mathrm{IV},a$ & $\mathrm{IV},b$\\
$g_{\ps_{T_{*}}}$ & 6 & 6 & 6 & 6 & 6 & 15 & 15 & 15 & 15 & 15 & 15\\
$\tilde{g}_{\ps_{T_{*}}}$ & 5 & 5 & 5 & 5 & 5 & 4 & 4 & 4 & 4 & 4 & 4\\
$g_{\ps_{T_{*}}}-\tilde{g}_{\ps_{T_{*}}}$ & 6 & 6 & 6 & 6 & 6 & 15 & 15 & 15 & 15 & 15 & 15\\
\hline
\end{tabular}
\end{table}}

\section{Closing remarks}

In a very recent paper of Bringmann, Jennings-Shaffer and Mahlburg \cite{BJM2018}, along with other results, the Kanade--Russell conjectures $I_5$ and $I_6$ were proved. Here the analytic forms of $I_5$ and $I_6$ read respectively as
\begin{align}
G_{\ps_{T_{\mathrm{III},a}}}(1,q)&=\sum_{n_1,n_2,n_3\ge 0}\frac{q^{\frac{n_1^2}{2}+3n_2^2+\frac{9n_3^2}{2}+2 n_1 n_2+6n_2 n_3+3n_3n_1+\frac{n_1}{2}+n_2+\frac{5n_3}{2}}}{(q;q)_{n_1} (q^2;q^2)_{n_2}  (q^3;q^3)_{n_3}}\nonumber\\
&=\frac{1}{ ( q, q^3, q^4, q^6, q^7, q^{10}, q^{11} ; q^{12})_\infty },\\[10pt]
G_{\ps_{T_{\mathrm{IV},b}}}(1,q)&=\sum_{n_1,n_2,n_3\ge 0}\frac{q^{\frac{n_1^2}{2}+3n_2^2+\frac{9n_3^2}{2}+2 n_1 n_2+6n_2 n_3+3n_3n_1+\frac{3n_1}{2}+3n_2+\frac{7n_3}{2}}}{(q;q)_{n_1} (q^2;q^2)_{n_2}  (q^3;q^3)_{n_3}}\nonumber\\
&=\frac{1}{ ( q^2, q^3, q^5, q^6, q^7, q^8, q^{11} ; q^{12})_\infty }.
\end{align}

The authors of \cite{BJM2018} cleverly reformulated $G_{\ps_{T_{\mathrm{III},a}}}(1,q)$ and $G_{\ps_{T_{\mathrm{IV},b}}}(1,q)$ and then added a new parameter so that the new bivariate generating functions satisfy simpler $q$-difference equations, from which the authors deduced the above identities.

Recall the standard notation for basic hypergeometric series:
$${}_{r+1}\phi_r\left(\begin{matrix} a_0,a_1,a_2\ldots,a_r\\ b_1,b_2,\ldots,b_r \end{matrix}; q, z\right):=\sum_{n\ge 0}\frac{(a_0;q)_n(a_1;q)_n\cdots(a_r;q)_n}{(q;q)_n(b_1;q)_n\cdots (b_r;q)_n} z^n.$$

Following the proofs of (1.15) and (1.16) in \cite{BJM2018}, one may prove the following identities with no difficulty.

\begin{theorem}
We have
\begin{align}
G_{\ps_{T_{\mathrm{III},1}}}(1,q)&=\sum_{n_1,n_2,n_3\ge 0}\frac{q^{\frac{n_1^2}{2}+3n_2^2+\frac{9n_3^2}{2}+2 n_1 n_2+6n_2 n_3+3n_3n_1+\frac{n_1}{2}-n_2-\frac{n_3}{2}}}{(q;q)_{n_1} (q^2;q^2)_{n_2}  (q^3;q^3)_{n_3}}\nonumber\\
&=(-q;q)_\infty (-q^3;q^6)_\infty\; {}_{2}\phi_1\left(\begin{matrix} q^{-1},q\\ q^2 \end{matrix}; q^6, -q^3\right),\\[10pt]
G_{\ps_{T_{\mathrm{III},2}}}(1,q)&=\sum_{n_1,n_2,n_3\ge 0}\frac{q^{\frac{n_1^2}{2}+3n_2^2+\frac{9n_3^2}{2}+2 n_1 n_2+6n_2 n_3+3n_3n_1+\frac{3n_1}{2}+n_2+\frac{5n_3}{2}}}{(q;q)_{n_1} (q^2;q^2)_{n_2}  (q^3;q^3)_{n_3}}\nonumber\\
&=(-q^2;q)_\infty (-q^3;q^6)_\infty\; {}_{2}\phi_1\left(\begin{matrix} q,q^5\\ q^8 \end{matrix}; q^6, -q^3\right),\\[10pt]
G_{\ps_{T_{\mathrm{IV},1}}}(1,q)&=\sum_{n_1,n_2,n_3\ge 0}\frac{q^{\frac{n_1^2}{2}+3n_2^2+\frac{9n_3^2}{2}+2 n_1 n_2+6n_2 n_3+3n_3n_1+\frac{n_1}{2}-n_2+\frac{n_3}{2}}}{(q;q)_{n_1} (q^2;q^2)_{n_2}  (q^3;q^3)_{n_3}}\nonumber\\
&=(-q;q)_\infty (-q^3;q^6)_\infty\; {}_{2}\phi_1\left(\begin{matrix} q^{-1},q\\ q^4 \end{matrix}; q^6, -q^3\right),\\[10pt]
G_{\ps_{T_{\mathrm{IV},a}}}(1,q)&=\sum_{n_1,n_2,n_3\ge 0}\frac{q^{\frac{n_1^2}{2}+3n_2^2+\frac{9n_3^2}{2}+2 n_1 n_2+6n_2 n_3+3n_3n_1+\frac{n_1}{2}+n_2+\frac{n_3}{2}}}{(q;q)_{n_1} (q^2;q^2)_{n_2}  (q^3;q^3)_{n_3}}\nonumber\\
&=(-q;q)_\infty (-q^3;q^6)_\infty\; {}_{2}\phi_1\left(\begin{matrix} q,q^5\\ q^4 \end{matrix}; q^6, -q^3\right).
\end{align}
\end{theorem}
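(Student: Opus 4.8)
The plan is to take the triple-series representations of $G_{\ps_{T_{\mathrm{III},1}}}(x,q)$, $G_{\ps_{T_{\mathrm{III},2}}}(x,q)$, $G_{\ps_{T_{\mathrm{IV},1}}}(x,q)$ and $G_{\ps_{T_{\mathrm{IV},a}}}(x,q)$ established in the previous sections (which are now known to be the honest generating functions), set $x=1$, and transform each resulting triple sum into the claimed form $(-q^{\bullet};q)_\infty(-q^{3};q^{6})_\infty\,{}_{2}\phi_1(\cdots;q^6,-q^3)$ by first summing out the $n_1$-index and then imitating the ``reformulate and add a parameter'' argument that Bringmann--Jennings-Shaffer--Mahlburg \cite{BJM2018} used for $I_5$ and $I_6$ (i.e.\ for the $T_{\mathrm{III},a}$ and $T_{\mathrm{IV},b}$ cases).

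\emph{Step 1: collapsing the $n_1$-sum.} In each of the four cases the part of the exponent of $q$ that involves $n_1$ is
\[
\tfrac{n_1^2}{2}+\tfrac{(2\beta-1)n_1}{2}+n_1(2n_2+3n_3)=\binom{n_1}{2}+n_1\bigl(\beta+2n_2+3n_3\bigr),
\]
where $\beta=1$ for $T_{\mathrm{III},1}$, $T_{\mathrm{IV},1}$, $T_{\mathrm{IV},a}$ and $\beta=2$ for $T_{\mathrm{III},2}$. Hence by Euler's identity $\sum_{n\ge 0}q^{\binom{n}{2}}z^{n}/(q;q)_n=(-z;q)_\infty$ the $n_1$-summation contributes the factor $(-q^{\beta+2n_2+3n_3};q)_\infty=(-q^{\beta};q)_\infty/(-q^{\beta};q)_{2n_2+3n_3}$, and for each $\bullet$ in the statement
\[
G_{\ps_{T_{\bullet}}}(1,q)=(-q^{\beta};q)_\infty\sum_{n_2,n_3\ge 0}\frac{q^{\,3n_2^2+\frac{9n_3^2}{2}+6n_2n_3+L'(n_2,n_3)}}{(q^{2};q^{2})_{n_2}\,(q^{3};q^{3})_{n_3}\,(-q^{\beta};q)_{2n_2+3n_3}},
\]
$L'$ being the linear form left over in each case. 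This already accounts for the prefactor $(-q;q)_\infty$ or $(-q^2;q)_\infty$ in the four identities.

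\emph{Step 2: the auxiliary-parameter device.} The two remaining indices are coupled only through the mixed symbol $(-q^{\beta};q)_{2n_2+3n_3}=(-q^{\beta};q)_{3n_3}\,(-q^{\beta+3n_3};q^{2})_{n_2}\,(-q^{\beta+3n_3+1};q^{2})_{n_2}$, and neither inner sum is individually summable in closed form; this is precisely the point at which \cite{BJM2018} passes to an augmented bivariate series. Following that idea, I would reintroduce a variable $z$ (tracking, say, the weight $2n_2+3n_3$, equivalently passing to a bivariate function that specializes to the above double sum at a convenient value of $z$) so that, after the splitting just indicated, the $n_2$-series becomes a basic hypergeometric series in base $q^2$ and the whole bivariate function $f(z)$ satisfies a first- or second-order $q$-difference equation with rational coefficients and shift $z\mapsto zq^{6}$. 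Iterating that equation telescopes the $n_3$-variable and produces the product $(-q^{3};q^{6})_\infty$ together with a ${}_{2}\phi_1$ in base $q^6$ and argument $-q^3$; tracking how the numerator/denominator parameters get shifted in the four cases is what distinguishes the parameter sets $(q^{-1},q;q^{2})$, $(q,q^{5};q^{8})$, $(q^{-1},q;q^{4})$, $(q,q^{5};q^{4})$. Specializing $z$ back then yields the stated closed forms, and a comparison of low-order $q$-expansions of both sides gives an independent numerical check.

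The main obstacle is exactly Step 2: one must choose the reformulation so that the augmented function genuinely obeys a manageable $q$-difference equation, and then correctly recognize its iterated solution as the particular ${}_{2}\phi_1$ rather than a theta quotient (as happened for $I_5$, $I_6$). Disentangling the coupling $(-q^{\beta};q)_{2n_2+3n_3}$ while keeping the $n_2$-sum summable, and bookkeeping the parameter shifts through all four variants, is the delicate part; once the correct functional equation is in hand, the remaining telescoping and specialization are routine, which is why --- the linear forms $L'$ here differing from those of $I_5$, $I_6$ only by elementary shifts --- the computations ``go through with no difficulty'' as claimed, the $T_{\mathrm{III},a}$ and $T_{\mathrm{IV},b}$ treatments in \cite{BJM2018} serving as templates.
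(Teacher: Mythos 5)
Your plan is essentially the paper's own proof, which consists of the single remark that one should follow the proofs of (1.15)--(1.16) in \cite{BJM2018} together with a stated refinement of their Proposition 2.4; your Step 1 (Euler summation over $n_1$, extracting the prefactor $(-q^{\beta};q)_\infty$) and Step 2 (reintroducing a parameter so that the remaining double sum satisfies a solvable $q$-difference equation) are exactly that template. The only point to align is that the functional equation actually used is the second-order one recorded in the paper's closing Proposition, relating $A(x)$, $A(xq^{3})$ and $A(xq^{6})$ (rather than a single shift $z\mapsto zq^{6}$), whose explicit solution $(-x^{2}q^{d-6};q^{6})_\infty$ times a ${}_{2}\phi_{1}$-type series in base $q^{6}$ and argument $-x^{2}q^{d-6}$ is what produces the four parameter sets $(q^{-1},q;q^{2})$, $(q,q^{5};q^{8})$, $(q^{-1},q;q^{4})$, $(q,q^{5};q^{4})$ upon setting $x=1$.
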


Note that we shall use a refinement of Proposition 2.4 in \cite{BJM2018}, the proof of which comes from a slight modification of the original proof of Bringmann, Jennings-Shaffer and Mahlburg.

\begin{proposition}
Suppose that $A(x)=\sum_{n\ge0}\alpha_nx^n$ has positive radius of convergence
and $A(x)$ satisfies
\begin{align*}
A(x)&=\left(1+q^a+x^2q^b+x^2q^c\right)A\left(xq^3\right)\nonumber\\
&\quad -q^a\left(1+x^2q^{b+c-a-d+6}\right)\left(1+x^2q^{d}\right)A\left(xq^6\right),
\end{align*}
where $a\not\in 3\mathbb{Z}$ if $a\le-6$.
Then
\begin{align}
A(x)&=\alpha_0\left(-x^2q^{d-6};q^6\right)_\infty\;\sum_{n\ge0}\frac{ \left( q^{b-d+6},q^{c-d+6};q^6 \right)_n (-1)^nq^{(d-6)n}  }{\left( q^{6},q^{a+6};q^6 \right)_n}x^{2n}\nonumber\\
&\quad+\alpha_1 \left(-x^2q^{d-6};q^6\right)_\infty\;\sum_{n\ge0}\frac{ \left( q^{b-d+9},q^{c-d+9};q^6 \right)_n (-1)^nq^{(d-6)n} }{\left( q^{9}, q^{a+9};q^6 \right)_n}x^{2n+1}.
\end{align}
\end{proposition}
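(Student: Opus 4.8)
The plan is to follow, with minor adjustments, the argument used for Proposition~2.4 in \cite{BJM2018}: reduce everything to the fact that the $q$-difference equation, together with the two coefficients $\alpha_0$ and $\alpha_1$, pins down $A$ uniquely, and then check that the proposed right-hand side solves the equation with the correct $[x^0]$ and $[x^1]$ coefficients.

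First I would observe that both coefficient polynomials occurring in the equation, namely $1+q^a+x^2(q^b+q^c)$ and $-q^a(1+x^2q^{b+c-a-d+6})(1+x^2q^d)$, are \emph{even} in $x$. Hence the even and odd parts $A_{\mathrm{ev}}(x)=\tfrac12(A(x)+A(-x))$ and $A_{\mathrm{od}}(x)=\tfrac12(A(x)-A(-x))$ each satisfy the \emph{same} functional equation. Since the claimed right-hand side is already split into its even part (the $\alpha_0$-sum, involving only even powers of $x$) and its odd part (the $\alpha_1$-sum), it suffices to identify $A_{\mathrm{ev}}$ with the first sum and $A_{\mathrm{od}}$ with the second.

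For uniqueness I would extract the coefficient of $x^M$ from the equation; since $A$ has positive radius of convergence, the series $A(xq^3)$ and $A(xq^6)$ admit the expected term-by-term expansions, and one finds that for every $M\ge1$ the quantity $(1-q^{3M})(1-q^{a+3M})\,\alpha_M$ is an explicit $\mathbb{Z}[q,q^{-1}]$-linear combination of $\alpha_{M-2}$ and $\alpha_{M-4}$ (and is $0$ when $M\le1$). Here $1-q^{3M}$ never vanishes, and $1-q^{a+3M}=0$ forces $a=-3M$: for $M\ge2$ this would put $a$ in $3\mathbb{Z}$ with $a\le-6$, which is excluded, while for $M=1$ it forces $a=-3$, in which case the $M=1$ relation degenerates to $0=0$. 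Hence $A$ is determined by $\alpha_0$ and $\alpha_1$; moreover, when $a\ne-3$ the $M=1$ relation reads $(1-q^3)(1-q^{a+3})\alpha_1=0$ and therefore forces $\alpha_1=0$. The same hypothesis on $a$ keeps $(q^{a+6};q^6)_n$ and $(q^{a+9};q^6)_n$ nonzero, so that both sums on the right-hand side are well defined.

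It remains to verify that each of the two sums solves the functional equation. Putting $y=x^2$, write the $\alpha_0$-sum as $\alpha_0(-yq^{d-6};q^6)_\infty\,\Phi(y)$ with $\Phi(y)=\sum_{n\ge0}\frac{(q^{b-d+6};q^6)_n(q^{c-d+6};q^6)_n(-q^{d-6})^n}{(q^6;q^6)_n(q^{a+6};q^6)_n}y^n$; using $(-yq^{d-6};q^6)_\infty=(1+yq^{d-6})(1+yq^{d})(-yq^{d+6};q^6)_\infty$ and cancelling the common infinite product together with one factor $1+yq^d$, the functional equation becomes the three-term relation $(1+yq^{d-6})\Phi(y)=(1+q^a+y(q^b+q^c))\Phi(yq^6)-q^a(1+yq^{b+c-a-d+6})\Phi(yq^{12})$, and comparing coefficients of $y^n$ reduces this to the single two-term identity $(1-q^{6n})(1-q^{a+6n})\phi_n=-(q^{d-6}-q^{b+6n-6})(1-q^{c-d+6n})\phi_{n-1}$ for the summand $\phi_n$, which follows at once from its explicit product form. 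Since this sum has constant term $\alpha_0$, uniqueness gives $A_{\mathrm{ev}}$. The computation for the $\alpha_1$-sum is entirely parallel and produces, for all $n\ge1$, the analogous two-term identity for its summand; only the initial ($n=0$) instance is special, where the relation collapses to $q^{a+3}=1$ and so holds precisely when $a=-3$. Consequently, if $a=-3$ the $\alpha_1$-sum solves the equation and has $[x^1]$ equal to $\alpha_1$, so it equals $A_{\mathrm{od}}$ by uniqueness; and if $a\ne-3$ we have seen that $\alpha_1=0$, so $A_{\mathrm{od}}$ and the $\alpha_1$-sum both vanish. In all cases $A=A_{\mathrm{ev}}+A_{\mathrm{od}}$ equals the stated expression. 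The one place demanding genuine care is the bookkeeping around the exceptional value $a=-3$, and more generally the role of the hypothesis $a\notin3\mathbb{Z}$ for $a\le-6$ in both the uniqueness step and the well-definedness of the two sums; everything else is routine coefficient comparison as in \cite{BJM2018}.
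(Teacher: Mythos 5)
Your argument is correct and is essentially the proof the paper intends: the paper gives no details, deferring to a ``slight modification'' of the proof of Proposition~2.4 in \cite{BJM2018}, which is exactly the uniqueness-plus-verification scheme you carry out (the coefficient recurrence $(1-q^{3M})(1-q^{a+3M})\alpha_M=\cdots$ pins down $A$ from $\alpha_0,\alpha_1$, and the two candidate series satisfy the reduced three-term relation after cancelling $(1+x^2q^d)(-x^2q^{d+6};q^6)_\infty$). Your bookkeeping around $a=-3$ (where the odd sum genuinely solves the equation and $\alpha_1$ is free) versus $a\neq-3$ (where the $M=1$ relation forces $\alpha_1=0$ so the odd term is vacuous), and your use of the hypothesis $a\notin3\mathbb{Z}$ for $a\le-6$ to keep $(q^{a+6};q^6)_n$, $(q^{a+9};q^6)_n$ and the leading recurrence coefficients nonzero, supply precisely the details the paper leaves implicit.
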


\subsection*{Acknowledgements}

We would like to thank George Andrews for helpful discussions and for sharing his conjecture. We also want to thank the referees for their detailed comments on an earlier version of this paper.

\bibliographystyle{amsplain}

\begin{thebibliography}{99}

\bibitem{And1972}
G. E. Andrews, Partition identities, \textit{Advances in Math.} \textbf{9} (1972), 10--51.

\bibitem{And1974}
G. E. Andrews, A general theory of identities of the Rogers-Ramanujan type, \textit{Bull. Amer. Math. Soc.} \textbf{80} (1974), 1033--1052.

\bibitem{And1975}
G. E. Andrews, Problems and prospects for basic hypergeometric functions, \textit{Theory and application of special functions (Proc. Advanced Sem., Math. Res. Center, Univ. Wisconsin, Madison, Wis., 1975)}, pp. 191--224. Math. Res. Center, Univ. Wisconsin, Publ. No. 35, Academic Press, New York, 1975.

\bibitem{And1976}
G. E. Andrews, \textit{The theory of partitions}, Reprint of the 1976 original. Cambridge Mathematical Library. Cambridge University Press, Cambridge, 1998. xvi+255 pp.

\bibitem{BJM2018}
K. Bringmann, C. Jennings-Shaffer, and K. Mahlburg, Proofs and reductions of various conjectured partition identities of Kanade and Russell, to appear in \textit{J. Reine Angew. Math.} Available at arXiv:1809.06089.

\bibitem{Kan2015}
S. Kanade, Some results on the representation theory of vertex operator algebras and integer partition identities, Thesis (Ph.D.)--Rutgers The State University of New Jersey - New Brunswick. 2015. 114 pp.


\bibitem{KR2015}
S. Kanade and M. C. Russell, \texttt{IdentityFinder} and some new identities of Rogers-Ramanujan type, \textit{Exp. Math.} \textbf{24} (2015), no. 4, 419--423.

\bibitem{KR2018}
S. Kanade and M. C. Russell, Staircases to analytic sum-sides for many new integer partition identities of Rogers-Ramanujan type, \textit{Electron. J. Combin.} \textbf{26} (2019), no. 1, Paper 1.6, 33 pp.

\bibitem{KK2009}
M. Kauers and C. Koutschan, A Mathematica package for $q$-holonomic sequences and power series, \textit{Ramanujan J.} \textbf{19} (2009), no. 2, 137--150.

\bibitem{Kur2018}
K. Kur\c{s}ung\"{o}z, Andrews--Gordon type series for Kanade--Russell conjectures, \textit{Ann. Comb.} \textbf{23} (2019), no. 3-4, 835--888.

\bibitem{Rie2003}
A. Riese, \texttt{qMultiSum}---a package for proving $q$-hypergeometric multiple summation identities, \textit{J. Symbolic Comput.} \textbf{35} (2003), no. 3, 349--376.

\bibitem{Rus2016}
M. C. Russell, Using experimental mathematics to conjecture and prove theorems in the theory of partitions and commutative and non-commutative recurrences, Thesis (Ph.D.)--Rutgers The State University of New Jersey - New Brunswick. 2016. 74 pp.

\end{thebibliography}

\end{document}